\newcommand{\hul}{h_{u,l}}
\newtheorem{thm}{Theorem}[section]
\newtheorem{prop}[thm]{Proposition}
\newtheorem{dfn}[thm]{Definition}
\numberwithin{equation}{section}
\newcommand{\Z}{\mathbb{Z}}
\newcommand{\R}{\mathbb{R}}
\newcommand{\A}{\mathcal{A}}
\newcommand{\e}{\epsilon}
\newtheorem{theorem}{Theorem}[section]
\newtheorem{lemma}{Lemma}[section]
\newtheorem{corollary}{Corollary}[section]
\newtheorem{definition}{Definition}[section]
\newtheorem{proposition}{Proposition}[section]
\newtheorem{ax}[thm]{Axiom}
\theoremstyle{definition}
\newtheorem{example}[theorem]{Example}
\newtheorem{remark}[theorem]{Remark}
\newtheorem{convention}[theorem]{Convention} 
\newtheorem{observation}[theorem]{Observation}
\begin{document}
\title[Generating families and augmentations]{Generating families and augmentations for Legendrian surfaces}

\thanks{ The first author is partially supported by grant 429536 from the Simons Foundation.  The second author is partially supported by grant 317469 from the Simons Foundation. He thanks the Centre de Recherches Mathematiques for hosting him while some of this work was done.}

\author{Dan Rutherford}

\author{Michael Sullivan}

\begin{abstract}  We study augmentations of a Legendrian surface $L$ in the $1$-jet space, $J^1M$, of a surface $M$.   We introduce two types of algebraic/combinatorial structures related to the front projection of $L$ that we call chain homotopy diagrams (CHDs) and Morse complex $2$-families (MC2Fs), and show that the existence of either a $\rho$-graded CHD or MC2F is equivalent to the existence of a $\rho$-graded augmentation of the Legendrian contact homology DGA to $\Z/2$. A CHD is an assignment of chain complexes, chain maps, and homotopy operators to the $0$-, $1$-, and $2$-cells of a compatible polygonal decomposition of the base projection of $L$ with restrictions arising from the front projection of $L$.  An MC2F consists of a collection of formal handleslide sets and chain complexes, subject to axioms based on the behavior of Morse complexes in $2$-parameter families.  We prove that if a Legendrian surface has a tame at infinity  generating family, then it has a $0$-graded MC2F and hence a  $0$-graded augmentation.  In addition, continuation maps and a monodromy representation of $\pi_1(M)$ are associated to augmentations, and then used to provide more refined obstructions to the existence of generating families that  (i) are linear at infinity or (ii) have trival bundle domain.  We apply our methods in several examples.
\end{abstract}

\maketitle

{\small \tableofcontents}

\section{Introduction}

Pseudo-holomorphic curve based techniques 
have been used to prove
many results in contact and symplectic geometry over the last three decades.
One such method, which has enjoyed recent success in proving 
  rigidity results for Legendrian submanifolds and their exact Lagrangian cobordisms, is to package an appropriate class of pseudo-holomorphic curves into an invariant called Legendrian contact homology (LCH) 
which is the homology of a differential graded algebra (DGA).  
One way to extract information about a Legendrian using its LCH DGA is by considering augmentations which are DGA homomorphisms into a ground ring, that we take to be $\Z/2$ in this article.
As observed in \cite{Chekanov02}, an augmentation allows one to form a linearization of LCH which is more manageable than the full DGA.
Augmentations can arise geometrically from exact Lagrangian fillings (null-cobordisms) of a Legendrian, in which case their linearized homologies reflect the usual (relative) homology of the fillings \cite{Ekholm2, Rizell}.
In addition, augmentations of particular Legendrian surfaces have been used to provide
 powerful topological knot invariants through knot contact homology, with ties to string theory \cite{Ng, EENS, AENV}.
However, not all Legendrians have augmentations.



For a 1-dimensional Legendrian knot, $L$, in standard contact $\R^3 = J^1\R$ the existence problem for augmentations of the LCH DGA is well understood.  Fuchs found in \cite{Fuchs} an interesting combinatorial structure for a front projection called a normal ruling whose existence is equivalent to the existence of an augmentation, cf. \cite{FI,Sab}.  In addition, the existence of a $0$-graded normal ruling, (so also a $0$-graded augmentation), is equivalent to the existence of a linear at infinity generating family for $L$; see \cite{ChP,FuchsRutherford}.  Here, a generating family is a family of functions whose critical values coincide with the front projection of $L$.  To make this connection between generating families and augmentations more precise, Henry introduced an algebraic approximation for a generating family called a {\it Morse complex sequence}, and established a bijection between suitable equivalence classes of Morse complex sequences and homotopy classes of augmentations, \cite{Henry2011, HenryR}.

In this article, we take up analogous problems for Legendrian surfaces in $1$-jet spaces.  While a few important classes of Legendrian surfaces have had their DGAs extensively studied, eg. co-normal tori of braids/knots and isotopy spinnings of $1$-dimensional Legendrians, little has been known about the existence problem for augmentations of general Legendrian surfaces.  An obstacle to extending the methods used for $1$-dimensional Legendrians to the higher dimensional case has been the difficulty in $\dim \geq 2$ of giving an exact computation for the differential in the LCH DGA.  Building on work of Ekholm \cite{Ekholm07},  recent work of the authors \cite{RuSu1,RuSu2} gives explicit matrix formulas for the LCH differential of any generic Legendrian surface based on a choice of cellular decomposition for the base projection.   This cellular formulation of LCH is central to the present article.  

\subsection{Overview of results}

 Let $M$ be a surface, and let $L$ be a closed Legendrian surface in the $1$-jet space of $M$, $J^1M$.   
Given a  compatible cellular decomposition, $\mathcal{E}$, of the base projection to $M$ of $L$, the {\it cellular DGA} $(\mathcal{A}, \partial)$ of \cite{RuSu1} has a matrix of generators associated with each $0$-, $1$-, and $2$-cell of $\mathcal{E}$; see Section \ref{sec:Background} for details.  An augmentation $\epsilon: (\mathcal{A}, \partial) \rightarrow (\Z/2, 0)$, then produces scalar matrices assigned to each cell that can be profitably viewed as linear maps.  In Section \ref{sec:CHD}, by interpreting the augmentation equation $\epsilon \circ \partial =0$ from this point of view, we arrive in Proposition \ref{prop:AugmentationCHD} at an equivalent characterization of an augmentation as a {\it chain homotopy diagram} (abbr. CHD) that associates chain complexes to $0$-cells, chain maps to $1$-cells, and chain homotopy operators to $2$-cells, subject to certain conditions dictated by $L$.

To make contact with generating families, in Section \ref{sec:Morse2CHD} we introduce the notion of a {\it Morse complex $2$-family} (abbrv. MC2F) for $L$.  An MC2F is a collection of data associated to the front projection of $L$ that is modelled on the $2$-parameter family of Morse complexes that arises when $L$ has a generating family;  MC2Fs are the $2$-dimensional analog of the Morse complex sequences studied by Henry.   In particular, we show in Proposition \ref{prop:GFMorse2} that equipping a tame at infinity (see Section \ref{sec:GeneratingFamilies}) generating family for $L$ with an appropriate family of gradient-like vector fields produces an MC2F for $L$.

Our main results are summarized in the following:
\begin{thm}
\label{thm:Main2}  Let $M$ be a surface, $L \subset J^1M$ a closed Legendrian, and $\rho$ a divisor of the Maslov number, $m(L)$. 

The following conditions are equivalent:

\begin{enumerate}
\item The LCH DGA of $L$ has a $\rho$-graded augmentation to $\Z/2$.
\item $L$ has a $\rho$-graded chain homotopy diagram.
\item $L$ has a $\rho$-graded Morse complex $2$-family. 
\end{enumerate}

Moreover, if $L$ has a tame at infinity
 generating family, then the LCH DGA of $L$ has a $0$-graded augmentation to $\Z/2$.

\end{thm}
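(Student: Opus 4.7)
The plan is to prove the cycle of equivalences (1) $\Leftrightarrow$ (2) $\Leftrightarrow$ (3), and then derive the moreover statement by combining (3) $\Rightarrow$ (1) with Proposition \ref{prop:GFMorse2}. The equivalence (1) $\Leftrightarrow$ (2) is exactly the content of Proposition \ref{prop:AugmentationCHD}: an augmentation $\epsilon:(\mathcal{A},\partial)\to (\Z/2,0)$ is, under the matrix interpretation of the cellular DGA, the same data as a collection of chain complexes, chain maps, and homotopy operators on the cells of $\mathcal{E}$ satisfying the CHD axioms. The grading by $\rho$ passes through this correspondence unchanged. The real content left to establish is therefore (2) $\Leftrightarrow$ (3) together with the generating family implication.

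For (3) $\Rightarrow$ (2), given an MC2F, I would first choose a compatible polygonal decomposition $\mathcal{E}$ of $M$ that is fine enough that every 2-cell meets the front singularities and handleslide arcs in a controlled, generic model. On each 0-cell I assign the chain complex recorded by the MC2F at that point, indexed by the sheets of $L$ above it. On each 1-cell the chain map is defined as the ordered composition of elementary transvections, one for each handleslide arc transverse to that 1-cell, together with the standard cusp transitions at cusp edges crossed by the 1-cell. On each 2-cell the homotopy operator is assembled from the finite list of 2-parameter events (handleslide triangles, handleslide births/deaths, handleslide collisions with swallowtails, triple points and cusp edges) that occur inside it; the MC2F axioms are precisely designed to record exactly the algebraic data needed to satisfy the commutation-up-to-homotopy relation on each 2-cell. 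Verifying that the CHD axioms follow amounts to a case-by-case check, 2-cell type by 2-cell type, using the compatibility conditions in the definition of MC2F. The converse (2) $\Rightarrow$ (3) reverses this process: starting from a CHD, one interpolates between neighbouring 0-cells by a generic 1-parameter family to recover handleslide arcs, and interpolates across 2-cells to recover the 2-parameter data, checking that the CHD relations translate precisely into the MC2F axioms. I expect this bidirectional translation, especially controlling the combinatorics near swallowtail points and self-transverse double curves, to be the main obstacle of the proof.

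With (1), (2), (3) equivalent, the moreover statement follows by concatenation. If $L$ admits a tame at infinity generating family $F$, then Proposition \ref{prop:GFMorse2} produces, after a choice of suitable gradient-like vector field family, a Morse complex 2-family for $L$. This MC2F is naturally $0$-graded: the fiberwise Morse index of $F$ assigns an integer grading to each sheet at every point of $M$, so the differences between indices of neighbouring sheets give a lift to $\Z$ of the $\Z/m(L)$-valued Maslov grading, forcing the grading to descend modulo $\rho=0$. Invoking the implication (3) $\Rightarrow$ (1) of the main equivalence then yields a $0$-graded augmentation of the LCH DGA to $\Z/2$, completing the theorem.
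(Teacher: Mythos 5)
Your overall architecture coincides with the paper's: (1) $\Leftrightarrow$ (2) is exactly Proposition \ref{prop:AugmentationCHD}, the generating-family statement is Proposition \ref{prop:GFMorse2} fed through (3) $\Rightarrow$ (1), and the remaining substance is the two-way translation between CHDs and MC2Fs, which the paper carries out in Propositions \ref{prop:CM2FtoCHD} and \ref{prop:CHDMorse2}. So the plan is sound in outline, but two points in your sketch of (2) $\Leftrightarrow$ (3) are not yet proofs and one of them hides the key idea.

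First, a compatible polygonal decomposition is required to contain $\Pi_B(\Sigma)$ in its $1$-skeleton, so you cannot choose $\mathcal{E}$ so that $1$-cells cross cusp edges transversally or so that $2$-cells contain the front singularities ``in a controlled model''; the $1$-cells typically \emph{are} crossing and cusp arcs. The paper's device is the opposite of refining $\mathcal{E}$: one perturbs the MC2F to be nice with respect to the given $\mathcal{E}$ and then shifts $0$- and $1$-cells slightly into adjacent $2$-cells, defining the complexes and chain maps as continuation maps along the shifted paths; one must then prove independence of the choice of shift (Propositions \ref{prop:dmaps} and \ref{prop:fmaps}) and insert the correction factors $H_S$, $H_T$ at swallowtail points, after which the homotopy operators $K_\gamma$ come from Proposition \ref{prop:ContinuationMap}(6) applied to the two boundary halves of each $2$-cell. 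Second, and more seriously, in the direction CHD $\Rightarrow$ MC2F your ``interpolation across $2$-cells'' does not explain how a nonzero homotopy operator $K_\gamma$ is realized geometrically: the continuation maps along the two halves of $\partial e^2_\gamma$ are in general only chain homotopic, not equal, and the only MC2F mechanism producing a nontrivial chain homotopy is the super-handleslide set $H_{-1}$ (Case 3 in the proof of Proposition \ref{prop:ContinuationMap}). The paper's construction realizes each $f_\beta$ over a $1$-cell by a Gauss--Jordan factorization into handleslide maps, and then places one $(i,j)$-super-handleslide point in the interior of $e^2_\gamma$ for each nonzero entry $\langle K_\gamma S_j, S_i\rangle$, with attached handleslide arcs dictated by Axiom \ref{ax:endpoints}(2), before matching handleslide endpoints across the cell. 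Since you never invoke $H_{-1}$, your version of this direction would fail whenever some $K_\gamma \neq 0$; with that ingredient added (and the swallowtail bookkeeping above), your proposal becomes the paper's proof.
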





A generating family $F:E \rightarrow \R$ for a Legendrian in $J^1M$ has as its domain a fiber bundle over $M$, $\pi: E \rightarrow M$.  The bundle does not need to be trivial, and this can be reflected by a monodromy representation of the fundamental group of $M$ on the homology of a fiber  $E_{x_0} = \pi^{-1}(\{x_0\})$.  By carrying out a similar construction for MC2Fs, see Section \ref{sec:Comb}, and making use of the correspondence from Theorem \ref{thm:Main2}, in Section \ref{sec:MonoAug} we  associate  to an augmentation, $\epsilon$, and $x_0 \in M$ a {\it fiber homology} $H(\epsilon_{x_0})$ equipped with a {\it monodromy representation}  $\Phi_{\epsilon,x_0}: \pi_1(M,x_0)^{\mathit{op}} \rightarrow \mathit{GL}(H(\epsilon_{x_0}))$.  Using these representations, we provide in Proposition \ref{prop:ObstructA} obstructions to the existence of generating families that are (i) linear at infinity or (ii) defined on a trivial bundle.

In the concluding Section \ref{sec:Examples}, we illustrate our general results with several examples. An interesting  family of Legendrians, $L_\Gamma$, arising from $3$-valent graphs $\Gamma \subset M$ was  introduced by Treumann and Zaslow in \cite{TZ}.  Using Theorem \ref{thm:Main2} we show that $L_\Gamma$ has an augmentation if and only if the dual graph to $\Gamma$ is $3$-colorable;  this parallels a result from \cite{TZ} about constructible sheaves.   We also give examples to illustrate the obstructions from Proposition \ref{prop:ObstructA}.



We mention a few interesting directions for possible future study.

\begin{itemize}
 \item[(i)]  Currently, we do not know whether the statement about generating families in Theorem \ref{thm:Main2} can be strengthened to an if and only if statement.  A more precise question is whether every $0$-graded MC2F arises from an actual generating family via an appropriate choice of gradient vector field.  

\item[(ii)] The constructible sheaf invariants of Legendrian submanifolds introduced in \cite{STZ} have also been shown to have close ties to generating families, cf. \cite{Shende}, and (in dimension 1) augmentations \cite{NRSSZ}.  It is possible that the equivalent characterizations of augmentations for Legendrian surfaces from Theorem \ref{thm:Main2} could be useful for establishing a connection with sheaf-based invariants.  
\end{itemize}

\subsection{Organization}  The proof of Theorem \ref{thm:Main2} is based on the following logic: 
   \[
\mbox{Generating family} \rightarrow \mbox{MC2F} \leftrightarrow \mbox{CHD} \leftrightarrow \mbox{Augmentation}.
\]

In Section \ref{sec:Background} we review generating families, augmentations, and the cellular formulation of the LCH DGA for Legendrian surfaces from \cite{RuSu1, RuSu2}.  In Section \ref{sec:CHD}, we define CHDs and show that they are in bijection with augmentations of the cellular DGA.  
In Section \ref{sec:Morse2CHD}, we define MC2Fs and use the analysis of $2$-parameter families of functions from \cite{HatcherWagoner} to show how a generating family for $\Lambda$ produces an MC2F.  
In addition, given an MC2F, we associate continuation maps to paths in $M$.  The properties of these maps established in Proposition \ref{prop:ContinuationMap} allow us to define monodromy representations for MC2Fs and are later used for translating between CHDs and MC2Fs.
The construction of a CHD from an MC2F is carried out in Section \ref{sec:Proof}.  After establishing some tools that are useful for the construction of MC2Fs,  the converse construction of an MC2F from a CHD appears in Section \ref{sec:CHDMC2F}.
The monodromy representations for augmentations are constructed at the end of Section \ref{sec:CHDMC2F} with obstructions to particular types of generating families observed in Proposition \ref{prop:ObstructA}.  Finally, in Section \ref{sec:Examples} we apply our general results to several examples.

\section{Background}
\label{sec:Background}

\subsection{Legendrian surfaces}  
Let $M$ be a 2-dimensional manifold. Then 
$J^1M= T^*M \times \R_z$ is a 5-dimensional contact manifold with a standard contact structure 
$\xi = \mbox{ker}(dz - ydx)$ where $x = (x_1,x_2)$ are local coordinates for $M$ (which we denote sometimes by $M_x$) and $y = (y_1,y_2) \in T_xM$ are fiber coordinates.
A {\bf{Legendrian}} (surface) $L \subset J^1M$ is a two-dimensional submanifold such that $TL \subset \xi.$

Let $\Pi_F: J^1M\rightarrow J^0M= M \times \R_z$ be the so-called {\bf front projection}.
Let $\Pi_B: J^1 M \rightarrow  M$ be the {\bf base projection}.
We usually consider Legendrians that have generic front and base projections;  see \cite[Section 2.2]{RuSu1} for a detailed discussion.  Figure \ref{fig:Codim2Both} illustrates the generic singularities which arise in $\Pi_F(L)$ and $\Pi_B(L)$.   
At a swallowtail point, a pair of cusp edges and a crossing arc all meet.  We call a swallowtail point {\bf upward} (resp. {\bf downward}) if the sheet that connects the two cusp edges appears above (resp. below) the two crossing sheets.  
In the base projection, the image of the cusp edges divides a disk neighborhood of a swallowtail point into two parts.  We refer to the region between the two cusp edges, above which the cusp sheets exist, as the {\bf swallowtail region}.






\begin{figure}
\labellist
\small
\pinlabel $x_1$ [l] at 38 48
\pinlabel $x_2$ [b] at 2 86
\pinlabel $x_1$ [l] at 38 200
\pinlabel $x_2$ [l] at 22 226
\pinlabel $z$ [b] at 2 238
\endlabellist
\centerline{\includegraphics[scale=.4]{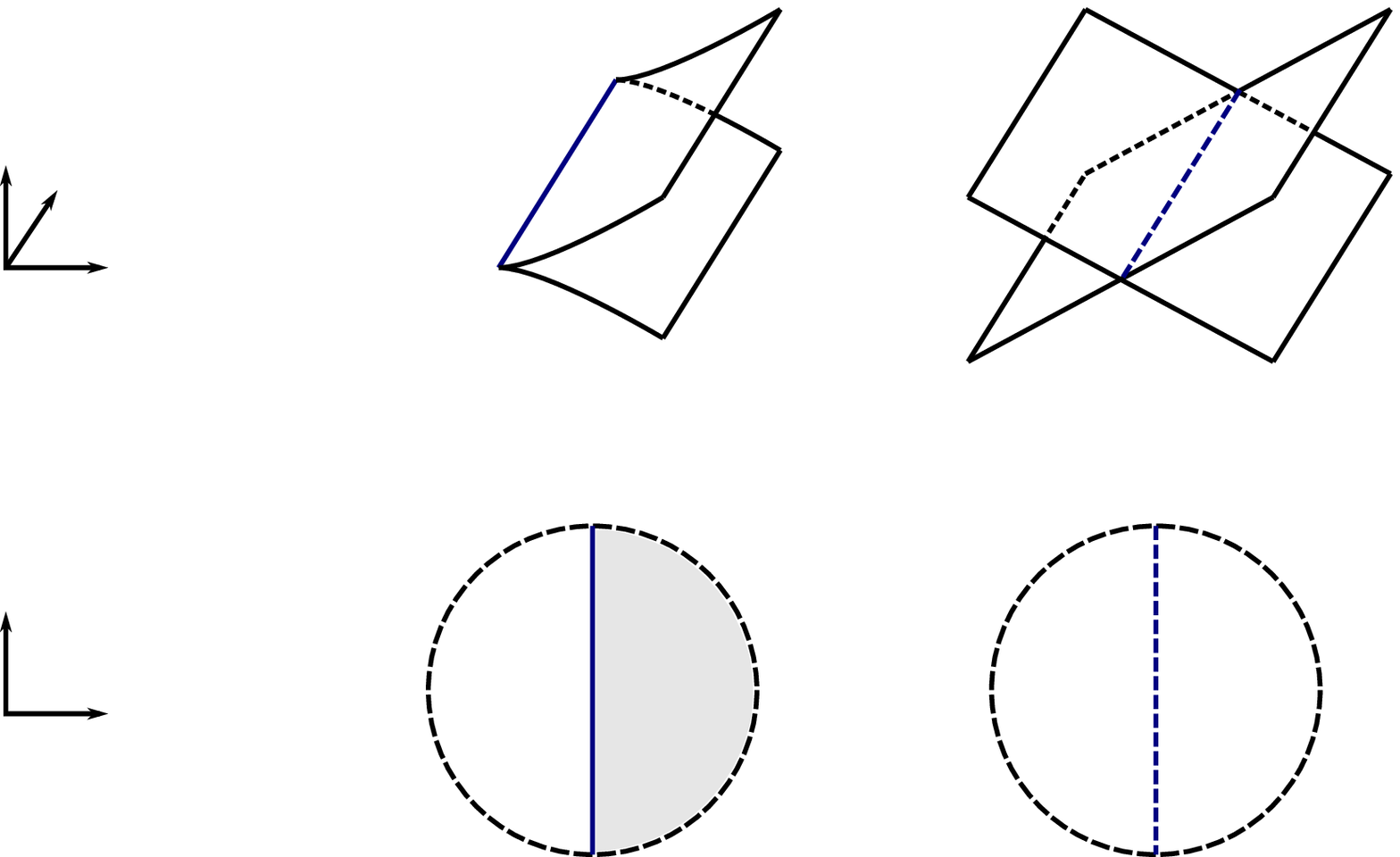} \quad \quad \includegraphics[scale=.4]{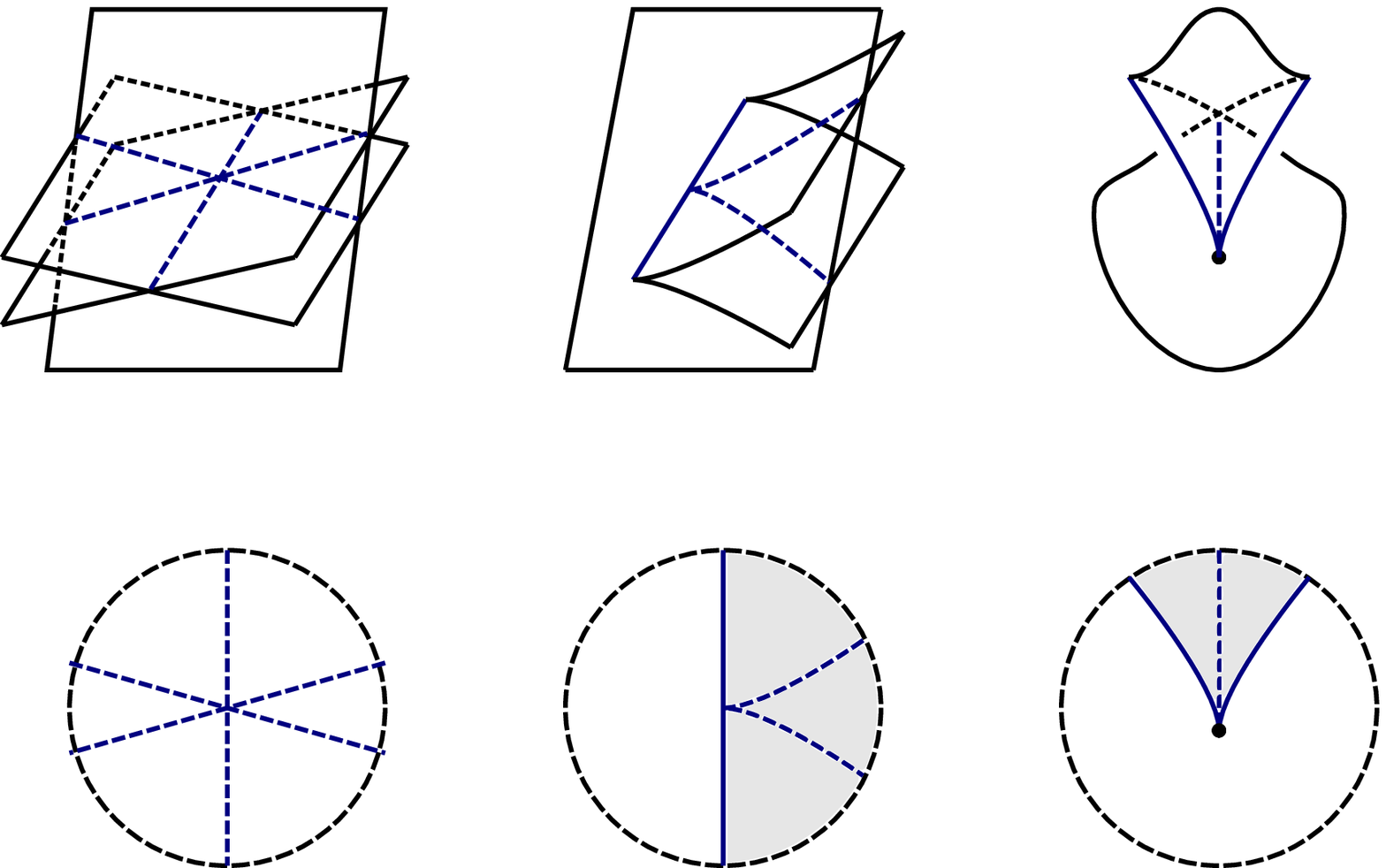}}

\caption{Generic singularities of front projections are pictured along with their base projections. From left to right:  cusps, crossings, triple points, cusp-sheet intersection, and swallowtail points.  Additional codimension $2$ singularities in the base projection arise as transverse intersections of two crossing/cusp arcs that are disjoint in $\Pi_{F}(L)$. }
\label{fig:Codim2Both}
\end{figure}

A generic loop $\gamma \subset L$ is assigned an integer $m(\gamma) = D(\gamma) - U(\gamma) \in \Z$ where $D(\gamma)$ (resp. $U(\gamma)$) are the number of times $\gamma$ crosses with a cusp edge of $L$ in the downward (resp. upward) direction.   This assignment gives a well defined cohomology class $m \in \mbox{Hom}(H_1(L;\Z), \Z) = H^1(L; \Z)$, and the {\bf Maslov number} of $L$, $m(L) \in \Z_{\geq0}$, is the non-negative generator of the image of $m$.   
A {\bf Maslov potential}, $\mu$, for $L$ is a locally constant function 
\[
\mu: L \setminus (\Sigma_{\mathit{cusp}} \cup \Sigma_{\mathit{st}})\rightarrow \Z/m(L),
\] where $\Sigma_{\mathit{cusp}} \cup \Sigma_{\mathit{st}} \subset L$ is the union of all cusp and swallowtail points, such that $\mu$ increases by $1$ when passing from the lower sheet to the upper sheet at any cusp edge.  Maslov potentials exist and, when $L$ is connected, are unique up to an overall additive constant.

\subsection{Generating families}
\label{sec:GeneratingFamilies}

We review generating families in the Legendrian setting; for more details and applications, see for example \cite{Chek, Traynor, SabTraynor}.    
Let $\pi: E \rightarrow M$ be a locally trivial fiber bundle over $M$ with manifold fiber $N$.
Given $F: E \rightarrow \R$ and $x \in M,$ we denote its restriction to a fiber by $f_x: \pi^{-1}(x) \cong N \rightarrow \R.$
We denote by $\eta = (\eta_1, \ldots, \eta_n) \in N$  locally-defined fiber coordinates and refer to a point in $E$ as $e = (x, \eta).$
  Suppose that $dF:E \rightarrow T^*E$ is transverse to the fiber normal bundle
\[
N_E = \left\{ (e, \nu) \in T^*E \,\, | \,\, \nu = 0 \,\, \mbox{on} \,\, \mbox{ker} (d\pi(e))\right\}.
\]
In coordinates, this is equivalent to $0$ being a regular value of $(x,\eta) \mapsto \partial_\eta F(x, \eta).$  This transversality condition ensures that the set of fiber critical points of $F,$
\[
\Sigma_F = \left\{ (x, \eta) \in E \,\, | \,\, (df_x)_\eta = 0 \right\} = (dF)^{-1}(N_E),
\]
is a manifold.  There is then a Legendrian immersion of $\Sigma_F$ into $J^1M$ given in coordinates by
\[
i_F: \Sigma_F \rightarrow J^1M, \quad  (x,\eta) \mapsto (x,y,z) = (x, \partial_xF(x,\eta), F(x,\eta)).
\]
When $i_F$ is an embedding with $i_F(\Sigma_F) = L$, we say that $F$ is a {\bf generating family} for $L$.  
If $F$ is a generating family for $L,$ then so too is $F \circ \phi$ where $\phi:E \rightarrow E$ is a fiber-preserving diffeomorphism.  In addition, {\bf stabilizations} of $F$, defined by $\overline{F}: E \times \R^{m} \rightarrow \R$, $\overline{F}(e, \mu) = F(e) + Q(\mu)$ for some non-degenerate quadratic form $Q: \R^{m} \rightarrow \R$, 
are also generating families for $L$.

In order to apply the tools of Morse theory to $F$, it is important to make some assumption about the behavior of $F$ outside of compact sets.  The following two  conditions are commonly used in the generating family literature.  
A generating family  $F: E \rightarrow \R$ is {\bf{linear at infinity}}  (resp. {\bf quadratic at infinity}) if $E= E' \times \R^k$ where $E'$ is a locally trivial fiber bundle with closed manifold fibers and, outside of a compact subset of $E$,  $F$ agrees with a fixed non-zero linear form (resp. a fixed non-degenerate quadratic form) on $\R^k$.  We say $F$ is {\bf tame at infinity} if $F$ is either linear or quadratic at infinity.
Note that in the linear at infinity case, the $\R^k$ factor must have $k \geq 1$, while $k=0$ is allowed in the quadratic at infinity case.  
If $M$ is non-compact, then a quadratic at infinity generating family cannot produce a compact Legendrian.

\begin{remark}
It can be shown that, after a fiber-preserving diffeomorphism, a stabilization of a linear (resp. quadratic) at infinity generating family can again be made linear (resp. quadratic) at infinity.
This is an important point for defining generating family homology invariants using tame at infinity generating families; see \cite{SabTraynor}.
\end{remark}

\subsection{Augmentations}

A {\bf{differential graded algebra}} (DGA) in this article is an associative graded unital 
algebra $\mathcal{A},$ equipped with a differential; that is, a derivation 
$\partial: \mathcal{A} \rightarrow \mathcal{A}$ which squares to 0 and decreases the grading by 1. 
We consider DGAs with  ground ring $\Z/2$, that are graded by  $\Z/m$ for some $m \in \Z_{\geq 0}$ (when $m =0$,  $\Z/m=\Z$).  
The DGAs we consider are freely generated by elements of homogeneous degree.

An {\bf{augmentation}} $\epsilon: (\mathcal{A}, \partial) \rightarrow (\Z/2, 0)$ is an
algebra morphism $\epsilon: \mathcal{A} \rightarrow \Z/2$ such that $\epsilon(1) =1$ and $\epsilon \circ \partial = 0.$
Given a divisor $ \rho \,|\, m$, we say that $\epsilon$ is {\bf $\rho$-graded} if $\rho$ preserves grading mod $m$.  Equivalently, if $\epsilon(a_i) \neq 0$ for a generator $a_i \in \mathcal{A}$ implies $|a_i| = 0 \mbox{ mod } \rho$.

In the context of Legendrian contact homology, the standard notion of equivalence used for DGAs is {\bf stable tame isomorphism} which also implies homotopy equivalence.  The existence of a $\rho$-graded augmentation is invariant under stable tame isomorphism, cf. \cite{Chekanov02} or \cite[Section 2.1.1]{RuSu1}.



\subsection{The Cellular DGA}  We refer the reader to \cite{EkholmEtnyreSullivan05b, EkholmEtnyreSullivan07} or other sources for the
pseudo-holomorphic based definition of the DGA underlying Legendrian contact homology (LCH).
Instead, for the remainder of this section we review the stable tame isomorphic Cellular DGA.  The Cellular DGA was introduced in \cite[Section 3]{RuSu1}, and proven to be stable-tame isomorphic to the usual LCH DGA in \cite{RuSu2}.

\begin{definition}  Let  $L \subset J^1M$ be a Legendrian surface with generic base projection.  A {\bf compatible polygonal decomposition} for $L$, $\mathcal{E}$, is a polygonal cell decomposition of $\Pi_B (L) \subset M$ that contains $\Pi_B(\Sigma)$ in its $1$-skeleton, and is equipped with 
\begin{enumerate}
\item A choice of orientation for each 1-cell. 
\item In the domain of each 2-cell, two of its 0-cell vertices are labeled as `initial' and `terminal' vertices $v_0, v_1.$
If $v_0 = v_1$ we must also choose a direction for the path around the circle from $v_0$ to $v_1$.
\item At each swallowtail point, we choose a labeling of the two corners that border the crossing locus.  One region is labeled $S$ and the other $T$.
\end{enumerate}
\end{definition}
\begin{convention}  \label{conv:1} In this article, to simplify the exposition, we will assume in addition that near swallowtail points, the $1$-skeleton of $\mathcal{E}$ agrees with the projection of the singular set with the three $1$-cells oriented away from the swallowtail point.   The cellular DGA can be defined without this assumption.  See Figure \ref{fig:LocalST}.
\end{convention}

Let $e^d_\alpha$ be a cell from $\mathcal{E}$ where $0 \leq d \leq 2$ is the dimension.
We let $L(e^d_\alpha)$ denote the {\bf set of sheets} of $L$ above $e^d_\alpha$.  This is defined as 
 the set of those connected components of $L$ above $e^d_\alpha$ that are {\it not contained in a cusp edge}, i.e.
\[
L(e^d_\alpha) = \pi_0\left(\Pi_B^{-1}(e^d_\alpha) \cap( L \setminus \Sigma_{\mathit{cusp}}) \right). 
\]
  Note that we do consider a swallowtail point above a $0$-cell to be a sheet.
  Each set $L(e^d_\alpha)= \{S^\alpha_p\}$ has a partial order by (point-wise) descending $z$-coordinate,
\[
S^\alpha_p \prec S^\alpha_q  \quad \Leftrightarrow \quad z(S^\alpha_p) > z(S^\alpha_q);
\]
two sheets are incomparable if and only if they meet at a crossing arc above $e^d_\alpha$ in $\pi_F(L)$.
When the sheets of $L(e^d_\alpha)$ are totally ordered by $z$-coordinates, we use $\{1, 2, 3, \ldots, n\}$ for the indexing set so that $S^\alpha_i \prec S^\alpha_{i+1}$.


The algebra $\mathcal{A}$ is freely generated as follows.  For each cell $e^d_\alpha$ we associate one generator for each pair of sheets $S^\alpha_p$, $S^\alpha_q \in L(e^d_\alpha)$ satisfying $S^\alpha_p \prec S^\alpha_q$.  We denote these generators as $a^\alpha_{p,q}$, $b^\alpha_{p,q}$, or $c^\alpha_{p,q}$ in the case where $e^d_\alpha$ is a $0$-cell, $1$-cell, or $2$-cell respectively. 
Sometimes we suppress the superscript $\alpha$ from notation.
The grading of $\mathcal{A}$ requires a choice of Maslov potential, $\mu$, and is defined on generators by
\begin{equation}
\notag
|c_{p,q}| = \mu(S_p) -\mu(S_q) +1;  \quad |b_{p,q}| = \mu(S_p) -\mu(S_q);  \quad \mbox{and  }|a_{p,q}| = \mu(S_p) -\mu(S_q) -1.
\end{equation} 

\subsubsection{The differential without swallowtail points}
In reviewing the differential, we start with the case that $L$ {\it does not have swallowtail points.}  We choose for each cell a bijection $\iota$ between $\{1, \ldots, n_\alpha\}$   and the indexing set for $L(e^d_\alpha)$  that is compatible with the partial ordering  of $L(e^d_\alpha)$ in the sense that 
\[
S_p \prec S_q  \quad \Rightarrow \iota(p) < \iota(q).
\]  
Using the bijection, we arrange the generators corresponding to $e^d_\alpha$ into a strictly upper triangular $n_\alpha \times n_\alpha$ matrix,
which we label $A,$ $B$ or $C$ accordingly.
Note that entries in the upper triangular part of $A$ or $B$ that correspond to pairs of sheets that cross are $0$.

Next, suppose that a cell $e^{d'}_{\beta}$ appears along the boundary of $e^d_\alpha$ with $d'<d$.  We then place the generators associated to $e^{d'}_\beta$ into a corresponding $n_\alpha \times n_\alpha$ {\bf boundary matrix} $X$ as follows:  Each sheet in $L(e^{d'}_\beta)$ belongs to the closure of a unique sheet in $L(e^d_\alpha)$.  This identifies the indexing set of $L(e^{d'}_{\beta})$ with a subset of $\{1, \ldots, n_\alpha\}$, and we place the generators associated to $e^{d'}_\beta$ into the corresponding rows and columns of $X$.  The remaining rows and columns correspond to sheets of $L(e^d_\alpha)$ that meet a cusp edge above $e^{d'}_{\beta}$, and such sheets come in pairs.  When $d'=0$ (resp. $d'=1$), we insert the $2\times 2$ block $\left[ \begin{array}{cc} 0 & 1 \\ 0 & 0 \end{array} \right]$ (resp. $\left[ \begin{array}{cc} 0 & 0 \\ 0 & 0 \end{array} \right]$) along the diagonal  
in the columns and rows that represent each cusping pair of sheets.  

For a 1-cell, let $A_+$  (resp. $A_-$) be the boundary matrices for the terminal (resp. initial) vertex.
For a 2-cell, let $A_{v_0}$ and $A_{v_1}$ be the boundary matrices associated  to the chosen initial and terminal vertices, $v_0$ and $v_1.$
In addition, let $B_1, \ldots, B_{j}$ and $B_{j+1}, \ldots, B_{m}$ denote the boundary matrices associated to the successive boundary edges that  appear in the domain of the characteristic map for the $2$-cell, as we travel the two paths along the boundary of $D^2$ from $v_0$ to $v_1$. 
(If $v_0=v_1$, then one of these paths is constant as specified in the definition of $\mathcal{E}$.) 
The differential $\partial:\A \rightarrow \A$ is then determined by the following matrix formulas where $\partial$ is applied entry-by-entry.
\begin{eqnarray}
\label{eq:differential} 
\partial A & = & A^2, \\
\notag
\partial B & = & A_+(I+B) + (I+B) A_-,\\
\notag
\partial C & = & A_{v_1}C + C A_{v_0} + (I+B_{j})^{\eta_j} \cdots (I+B_1)^{\eta_1} + (I+B_{m})^{\eta_m} \cdots (I+B_{j+1})^{\eta_{j+1}},
\end{eqnarray}
where $\eta_i \in \{-1,+1\}$ compares the orientation of the 1-cell with the orientation of the path from $v_0$ to $v_1$ on which it lies.
See Figure \ref{fig:Diff}.

\begin{figure}
\labellist
\small
\pinlabel $A=(a^{\alpha}_{i,j})$ [b] at 2 110
\pinlabel $A_-$ [b] at 100 110
\pinlabel $A_+$ [b] at 212 110
\pinlabel $B$ [b] at 148 114
\pinlabel $B_1$ [tr] at 338 36
\pinlabel $B_2$ [r] at 302 110
\pinlabel $B_3$ [br] at 342 156
\pinlabel $B_4$ [tl] at 450 68
\pinlabel $B_5$ [bl] at 422 154
\pinlabel $A_{v_0}$ [t] at 404 0
\pinlabel $A_{v_1}$ [b] at 390 186
\pinlabel $C$  at 378 94
\endlabellist
\centerline{\includegraphics[scale=.6]{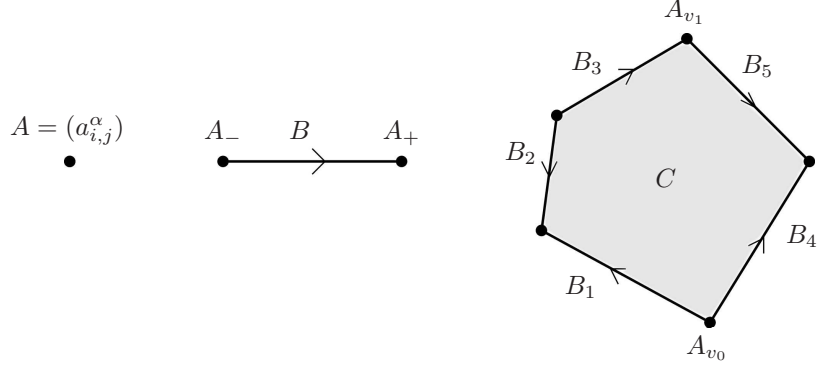}}

\quad

\caption{Differentials for the pictured cells are  $\partial A= A^2$; \quad $\partial B=A_+(I+B)+(I+B)A_-$; \quad $\partial C=A_{v_1}C+CA_{v_0}+(I+B_3)(I+B_2)^{-1}(I+B_1)+(I+B_5)^{-1}(I+B_4)$.}
\label{fig:Diff}
\end{figure}

\subsubsection{Adjustments for swallowtail points}  
In this article, we focus our arguments on the case of upward swallowtail points as pictured in Figure \ref{fig:Codim2Both}.
The downward swallowtail is similar;  for details see \cite[Sections 3.6-3.12]{RuSu1}.  
 Suppose  that near a swallowtail point, $e^0_{\mathit{st}}$, $L$ has $n$ sheets (resp. $(n-2)$ sheets)  inside (resp. outside) the swallowtail region, and  the sheets in position $k, k+1, k+2$ (with respect to descending $z$-coordinate) above the swallowtail region meet at the swallowtail point.  Recall that the two $2$-cell corners within the swallowtail region that border the crossing locus at the swallow tail point have been labeled with $S$ and $T.$ 

Let
\begin{equation}
\label{eq:DifferentialST}
\begin{array}{lll}
A_S & = [I + E_{k+2,k+1}] \widehat{A}_{k,k+2} [I + E_{k+2,k+1}];  &  A_T = [I+E_{k+1,k+2}]\widehat{A}_{k,k+1} [I+E_{k+1,k+2}];  \\ &  & \\
S & = I + \widehat{A}_{k,k+1} E_{k+2,k} + E_{k+1,k+2} &  T = I +E_{k+1,k+2}  \\
 & =I + \sum_{i<k} a_{i,k}E_{i,k} + E_{k+1,k+2};  &
\end{array}
\end{equation}
where $E_{i,j}$ is the matrix with all $0$'s except for a $1$ in the $(i,j)$-th entry, and $\widehat{A}_{i,j}$ is the $(n-2)\times(n-2)$ matrix $A$ over the swallowtail point enlarged by the $2\times 2$ block $\left[ \begin{array}{cc} 0 & 1 \\ 0 & 0 \end{array} \right]$ in columns (and rows) $i$ and $j.$ 

Let $B_{cr}$ denote the matrix over the 1-cell associated to the crossing locus with endpoint at $e^0_{st}$. 
If the ordering of the sheets used to form $B_{cr}$ agrees with that of the 2-cell marked by $S$ (resp. $T$) then in the differential
$\partial B_{cr}$ set the boundary matrix $A_{\pm}$ associated to $e^0_{st}$ equal to $A_S$ (resp. $A_T$).  By assumption on $\mathcal{E}$ in Convention \ref{conv:1}, all other $1$-cells with endpoints at $e^0_{st}$ have $n-2$ sheets, and we take the boundary matrix to just be $A$.   

For the 2-cell that includes the region marked by $S$ (resp. $T$), in equation (\ref{eq:differential}) we replace the $I+B_i$ factor   associated to the cusp edge that begins at the swallowtail point with the product $(I+B_i)S$  (resp. $(I+B_i)T$).


\section{Augmentations are CHDs}  \label{sec:CHD}

In this section, we examine augmentations of the cellular DGA.   By viewing the image of the matrices $A$, $B$, and $C$, as linear maps we establish in Proposition \ref{prop:AugmentationCHD} an equivalent characterization of augmentations as {\it Chain Homotopy Diagrams} which assign chain complexes, chain maps, and chain homotopies to the cells of $\mathcal{E}$. 

\subsection{Ordered complexes}

Let $V$ be a vector space over $\Z/2$ with specified basis $\mathcal{B}= \{v_p \,|\, p \in I\}.$
We use the inner product notation to denote the bilinear form $\langle v_p, v_q \rangle = \delta_{p,q}$, so that for $w = \sum \alpha_i v_i$  the $i$-th coefficient is  
$\alpha_i = \langle w, v_i \rangle \in \Z/2.$

\begin{dfn}
\label{dfn:UT}
Suppose that the basis $\mathcal{B}$ is equipped with a partial order $\prec$.  
A linear transformation $T: V \rightarrow V$ is {\bf{strictly upper triangular}} if 
\[
\langle T(v_q), v_p \rangle \neq 0 \quad \Rightarrow  \quad v_p \prec v_q.
\]
An {\bf ordered complex} is a triple $(V, \mathcal{B}, d)$ such that $d:V \rightarrow V$ is a differential, i.e. $d^2=0$, that is strictly upper triangular.
An ordered complex is {\bf $m$-graded} if basis vectors $v_p \in \mathcal{B}$ are assigned degrees $|v_p| \in \Z/m$, and  $d$ has degree $+1$ (mod $m$) with respect to the resulting grading on $V$. 
\end{dfn}

\subsection{Handleslide maps}

\begin{dfn}
\label{dfn:hsmap}
Let $V$ be a $\Z/2$-vector space with basis $\mathcal{B} = \{v_p \,|\, p \in I\}$.  Given $u,l \in I$, the {\bf{handle-slide map}} $h_{u,l}$ 
is the linear map satisfying
\begin{equation}
\label{eq:hsmap}
h_{u,l}(v_k) = v_k + \delta_{k,l} v_u.
\end{equation}
\end{dfn}

Note that since this article works with $\Z/2$-coefficients, $\hul^{-1} =\hul.$  When the indexing set $I$ is $\{1, \ldots, n\}$, the matrix for $h_{u,l}$ is $I+E_{u,l}$.

\subsection{Vector spaces associated to cells}

Let $L \subset J^1M$ be a Legendrian equipped with a Maslov potential $\mu$ and a compatible polygonal decomposition $\mathcal{E}$.  To each $d$-cell $e^d_\alpha \in \mathcal{E}$ we associate the vector space spanned by the (non-cusping) sheets of $L$ above $e^d_\alpha$,
\[
V(e^d_\alpha) = \mbox{Span}_{\Z/2} L(e^d_\alpha).
\]
Recall that $L(e^d_\alpha)$ is partially ordered by descending $z$-coordinate.  In addition, each $V(e^d_\alpha)$ has a $\Z/m(L)$-grading arising from 
\[
|S^\alpha_p| = \mu(S^\alpha_p).
\]

\subsection{Boundary differentials and maps} \label{sec:BoundaryDiff} In the following definitions we initially assume that $L$ has no swallowtail points, and then give modifications for the general case.

Suppose that a $0$-cell $e^{0}_\beta$ appears along the boundary of $e^d_\alpha$ with $d=1$ or $2$, and write 
  $e^0_\beta \stackrel{j}{\rightarrow} e^d_{\alpha}$ for a corresponding inclusion\footnote{There may be more than one such inclusion since $e^0_\beta$ may appear more than once along the boundary of $e^d_\alpha$.} of $e^0_\beta$ into the boundary of $D^d$, viewed as the domain of the characteristic map  $D^d \rightarrow \overline{e^d_\alpha} \subset M$.  Assuming that $V(e^{0}_\beta)$ has been given a differential $d_\beta$ such that $(V(e^{0}_\beta), L(e^{0}_\beta), d_\beta)$ is an ordered complex, we define a {\bf boundary differential}  
\[
\widehat{d}_\beta= \widehat{d}(e^{0}_\beta \stackrel{j}{\rightarrow} e^d_\alpha)
: V(e^d_\alpha) \rightarrow V(e^d_\alpha)
\] 
as follows.
The natural inclusion $i:L(e^{0}_\beta) \hookrightarrow L(e^d_\alpha)$ (where $i(S^\beta_p) = S^\alpha_q$ when $S^\beta_p \subset \overline{S^\alpha_q}$ in $L$), extends to an embedding $i:V(e^{0}_\beta) \hookrightarrow V(e^d_\alpha)$.  We have 
\[
V(e^d_\alpha)= i(V(e^{0}_\beta)) \oplus V_{\mathit{cusp}}
\]
where $V_{\mathit{cusp}}$ is spanned by the (possibly zero)
 sheets that meet a cusp edge above $e^{0}_\beta$.  We define $\widehat{d}_\beta$ to satisfy
\[
\widehat{d}_\beta= d_\beta \oplus d_{\mathit{cusp}}
\]
where $d_\mathit{cusp}(S^\alpha_b) = S^\alpha_a$ when sheets $S^\alpha_b$ and $S^\alpha_a$ meet at a cusp edge above $e^{0}_\beta$ with $S^\alpha_a$ (resp.  $S^\alpha_b$) the upper (resp. lower) sheet.

Next, suppose that for a $1$-cell, $e^1_\beta$, we are given a chain isomorphism
\[
f: (V(e^1_\beta), \widehat{d}_-) \rightarrow (V(e^1_\beta), \widehat{d}_+)
\]
where $\widehat{d}_-$ and $\widehat{d}_+$ are the boundary differentials associated to the intial and terminal vertices of $e^1_\beta$.  
 In addition, let $e^1_\beta \stackrel{j}{\rightarrow} e^2_\alpha$ be an appearance of $e^1_\beta$ along the boundary of $e^2_\alpha$.  (Technically, a lift of $e^1_\beta$ to the domain of the characteristic map of $e^2_\alpha$.)  
 We extend $f$ to a {\bf boundary morphism}
\[
\widehat{f}= \widehat{f}(e^1_\beta \stackrel{j}{\rightarrow} e^2_\alpha) : (V(e^2_\alpha), \widehat{d}_-) \rightarrow (V(e^2_\alpha), \widehat{d}_+)
\]
using the direct sum decomposition $V(e^2_\alpha)= i(V(e^1_\beta))\oplus V_{\mathit{cusp}}$ as
\[
\widehat{f} = f \oplus \mathit{id}.
\]

\subsubsection{Adjustments for swallowtail points}  
Suppose now that $e^0_{st}$ is an upward swallowtail point.   (The downward case is similar.)  Label adjacent cells as $e^1_S$, $e^1_T$, $e^1_{cr}$, $e^2_S$, and $e^2_T$ so that $e^2_S$ and $e^2_T$ contain the corners labeled $S$ and $T$; $e^1_{cr}$ contains the crossing locus; and $e^1_S$ and $e^1_T$ sit below the cusp edges that border the $S$ and $T$ corners.   See Figure \ref{fig:LocalST}.

\begin{figure}
\labellist
\small
\pinlabel $e^1_T$ [tl] at 82 62
\pinlabel $e^2_T$  at 66 76
\pinlabel $e^1_S$ [tr] at 12 62
\pinlabel $e^2_S$  at 32 76
\pinlabel $T$  at 54 34
\pinlabel $S$  at 44 34
\pinlabel $e^1_{\mathit{cr}}$ [l] at 50 56
\pinlabel $e^0_{\mathit{st}}$ [l] at 54 2
\endlabellist
\centerline{\includegraphics[scale=1]{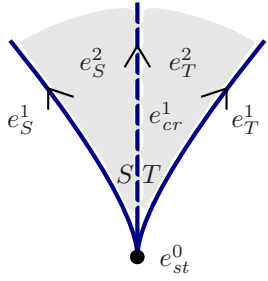}}

\quad

\caption{Notation for cells near a swallowtail point.}
\label{fig:LocalST}
\end{figure}

We make the following adjustments:

\begin{enumerate}
\item Given $(V(e^0_{st}), d)$, the boundary differentials for $V(e^2_{T})$, $V(e^1_{cr})$ and $V(e^2_{S})$ are defined as follows.  Above  $e^2_T$, the sheets of $L$ are totally ordered, so we write
\[
L(e^2_T)  = \{S_{1}, \ldots, S_{n}\} \quad \mbox{with $S_{i} \prec S_{i+1}$.}  
\]
Let $S_k, S_{k+1}, S_{k+2}\in L(e^2_T) $ be the sheets whose closures contain the swallowtail point, so that $S_{k+1}$ and $S_{k+2}$ meet at the crossing arc. First, we define $d_{k,k+1}:V(e^2_T) \rightarrow V(e^2_T)$ as if sheets $S_k$ and $S_{k+1}$ meet at a cusp above $e^0_{st}$, i.e. identify $V(e^0_{st})$ with the subspace spanned by $\{S_{1}, \ldots, \widehat{S_k}, \widehat{S_{k+1}}, S_{k+2}, S_{n}\} \subset L(e^2_T)$, and extend $d$ to $d_{k,k+1}$ via $d_{k,k+1} S_{k+1} = S_k$.  Then, define the
\begin{equation} \label{eq:dTdef}
\widehat{d}_T 
=  h_{k+1,k+2} d_{k,k+1} h_{k+1,k+2}
\end{equation}
where $h_{k+1,k+2}$ is the handleslide map, $h_{k+1,k+2}(S_l) = S_l + \delta_{l,k+2} S_{k+1}$.

The boundary differentials on $V(e^1_{cr})$ and $V(e^2_{S})$ are defined so that the bijections $L(e^2_S) \cong L(e^1_{cr}) \cong L(e^2_{T})$ (from identifying sheets whose closures intersect above $e^1_{cr}$) extend to isomorphisms of complexes.  Note that if sheets above $e^2_S$ are also labeled with descending $z$-coordinate, then the isomorphism $Q:V(e^2_T) \rightarrow V(e^2_S)$ interchanges $S_{k+1}$ and $S_{k+2}$.  Because of this,
 the boundary differential $\widehat{d}_S:V(e^2_S) \rightarrow V(e^2_S) $ would be
\begin{equation} \label{eq:dSdef}
\widehat{d}_S = Q \widehat{d}_T Q^{-1}= h_{k+2,k+1} d_{k,k+2} h_{k+2,k+1}
\end{equation}
with $d_{k,k+2}$ is formed as if $S_k$ and $S_{k+2}$ meet at a cusp above $e^0_{st}$.

Boundary differentials for $V(e^1_{T})$ and $V(e^1_{S})$ (and neighboring cells outside the swallowtail region) are defined using the bijection $L(e^1_S) \cong L(e^0_{st}) \cong L(e^1_{T})$.   

\item Suppose we have a chain isomorphism $f: (V(e^1_X), \widehat{d}_-) \rightarrow (V(e^1_X), \widehat{d}_+)$, for $X=S$ or $T$.  (Here, $\widehat{d}_-$ is the differential for the swallowtail point, since we have assumed in Convention \ref{conv:1} all $1$-cells are oriented away from the swallowtail point.)  We define the boundary morphism $\widehat{f}: (V(e^2_X), \widehat{d}_-) \rightarrow (V(e^2_X), \widehat{d}_+)$ via 
\[
\widehat{f} = (f \oplus \mathit{id}) \circ H_X
\]
where we decompose $V(e^2_X)$ in the usual way into $i(V(e^1_X)) \oplus V_{\mathit{cusp}}$, and $H_X$ is defined by
\begin{equation} \label{eq:HXdef}
H_S = \left(\prod_{\{i|\langle d_0S^0_{k}, S^0_i \rangle \ne 0\}} h_{i,k} \right) h_{k+1,k+2}, \quad \mbox{and} \quad H_T = h_{k+1,k+2}
\end{equation}
where $d_0$ denotes the differential on $V(e^0_{st})$.  (Note that sheets above $e^0_{st}$ are totally ordered, and the handleslide maps in the product all commute.)
\end{enumerate}

\begin{remark}
See Figure \ref{fig:Endpoints} and proof of Proposition \ref{prop:GFMorse2} for a Morse theoretic explanation of the maps $H_S$ and $H_T$.
\end{remark}

\begin{lemma}  \label{lem:XST} For $X=S$ or $T$, the boundary morphisms $\widehat{f}: (V(e^2_X), \widehat{d}_-) \rightarrow (V(e^2_X), \widehat{d}_+)$ are chain maps.
\end{lemma}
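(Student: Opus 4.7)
The plan is to reduce the chain map condition $\widehat{d}_+\,\widehat{f} = \widehat{f}\,\widehat{d}_-$ to the chain map property of $f$ itself by conjugating $\widehat{d}_-$ through $H_X$ into a form that respects the direct sum decomposition $V(e^2_X) = i(V(e^1_X)) \oplus V_{\mathit{cusp}}$. Set $\widetilde{d}_- := H_X\, \widehat{d}_X\, H_X^{-1}$; since we work over $\Z/2$ each handleslide is an involution, and the handleslides constituting $H_X$ pairwise commute because their sources and targets lie on disjoint basis vectors, so $H_X^{-1} = H_X$.

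The central claim is that $\widetilde{d}_-$ respects the decomposition and splits as $\widetilde{d}_- = \widehat{d}_-^{V(e^1_X)} \oplus d_{\mathit{cusp}}$, where $\widehat{d}_-^{V(e^1_X)}$ is the boundary differential on $V(e^1_X)$ induced from the swallowtail endpoint. For $X = T$ this is immediate: $H_T = h_{k+1,k+2}$ and $h_{k+1,k+2}^2 = \mathit{id}$ collapse $\widetilde{d}_-$ to $d_{k,k+1}$, which by construction splits this way. For $X = S$ I would verify the splitting on basis vectors of $V(e^2_S)$. The decisive case is $S_k$: one computes $H_S(S_k) = S_k + \widetilde{d_0 S^0_k}$ (with $\widetilde{d_0 S^0_k}$ the image in $V(e^2_S)$ of $d_0 S^0_k \in V(e^0_{\mathit{st}})$) and $\widehat{d}_S(S_k) = 0$, so $\widetilde{d}_-(S_k) = H_S\bigl(\widehat{d}_S(\widetilde{d_0 S^0_k})\bigr)$, and unraveling $\widehat{d}_S$ on this sum identifies the result with the image of $d_0^2 S^0_k = 0$, which is zero. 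On the remaining basis vectors the $S_{k+1}$ contributions produced by the conjugation $h_{k+2,k+1}(\,\cdot\,)h_{k+2,k+1}$ inside $\widehat{d}_S$ cancel modulo $2$ against those introduced by the handleslides $\prod h_{i,k}$ in $H_S$.

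Granting the splitting, $\widehat{d}_+$ also decomposes as $\widehat{d}_+^{V(e^1_X)} \oplus d_{\mathit{cusp}}$ (with the same $d_{\mathit{cusp}}$) by the construction of boundary differentials in Section \ref{sec:BoundaryDiff}. One then computes
\begin{align*}
\widehat{f}\,\widehat{d}_- &= (f \oplus \mathit{id})\, H_X\, \widehat{d}_X = (f \oplus \mathit{id})\, \widetilde{d}_-\, H_X = \bigl(f\, \widehat{d}_-^{V(e^1_X)} \oplus d_{\mathit{cusp}}\bigr)\, H_X, \\
\widehat{d}_+\,\widehat{f} &= (\widehat{d}_+^{V(e^1_X)} \oplus d_{\mathit{cusp}})(f \oplus \mathit{id})\, H_X = \bigl(\widehat{d}_+^{V(e^1_X)}\, f \oplus d_{\mathit{cusp}}\bigr)\, H_X,
\end{align*}
and the two agree by the chain map identity $\widehat{d}_+^{V(e^1_X)}\, f = f\, \widehat{d}_-^{V(e^1_X)}$.

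The main obstacle is verifying the splitting of $\widetilde{d}_-$ in the $X = S$ case. Conceptually, the handleslides $\prod_{i} h_{i,k}$ in $H_S$ are tailored precisely so that after applying $H_S$ the basis element $S_k$ becomes a $\widehat{d}_S$-cycle modulo the cusp subspace, the cycle condition being a transcription of $d_0^2 S^0_k = 0$. Once this design is spotted the verification is mechanical, but the bookkeeping over the three-sheet neighborhood of the swallowtail needs to be organized carefully.
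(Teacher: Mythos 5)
Your proposal is correct, and its skeleton is the same as the paper's: you factor $\widehat{f} = (f \oplus \mathit{id})\circ H_X$, observe that $f \oplus \mathit{id}$ is a chain map out of the split differential $d_{k,k+1} = \widehat{d}_-^{V(e^1_X)} \oplus d_{\mathit{cusp}}$ (this is exactly your statement that $\widehat{d}_+$ and the conjugate $\widetilde{d}_- = H_X \widehat{d}_- H_X$ both respect $i(V(e^1_X)) \oplus V_{\mathit{cusp}}$), and so reduce everything to the single identity $H_X \widehat{d}_- = d_{k,k+1} H_X$. Where you diverge is in how that identity is established: the paper converts it to the matrix equations (\ref{eq:Akk1}), $\widehat{A}_{k,k+1}S = SA_S$ and $\widehat{A}_{k,k+1}T = TA_T$, and gets them for free from the cellular DGA relations $\partial S = \widehat{A}_{k,k+1}S + SA_S$, $\partial T = \widehat{A}_{k,k+1}T + TA_T$ of \cite[Lemma 3.4]{RuSu1}, since under the specialization (\ref{eq:dS}) one has $\widehat{A}_{k,k+1}^2 = 0$; you instead verify the conjugation identity directly on basis vectors, with $d_0^2 S^0_k = 0$ doing the work in the $S_k$ case. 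The two are the same calculation in different clothing (the paper's vanishing of $\partial S = (\widehat{A}_{k,k+1})^2 E_{k+2,k}$ is precisely your $d_0^2 = 0$ input), so your route buys self-containedness at the cost of the bookkeeping you acknowledge, while the paper's buys brevity by leaning on \cite{RuSu1}. I checked that your sketch does close up on all basis vectors, but two small points in it are imprecise rather than wrong: the factors of $H_S$ do not have pairwise disjoint indices (the $h_{i,k}$ all share the lower index $k$) — what you actually need, and what holds, is that they pairwise commute, whence $H_X^{-1}=H_X$; and on the vectors $S_j$ with $j \geq k+3$ the cancelling $S_{k+1}$ term is produced by the $h_{k+1,k+2}$ factor of $H_S$, not by $\prod_i h_{i,k}$, while on $S_{k+1}$ and $S_{k+2}$ the cancellations involve the $\sum_i a_{i,k} S_i$ and $S_k$ terms respectively.
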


\begin{proof}
Note that $f \oplus \mathit{id}$ is a chain map from $(V(e^2_X), d_{k,k+1}) \rightarrow (V(e^2_X), \widehat{d}_+)$ (since the differentials respects the direct sum $i(V(e^1_x)) \oplus V_{\mathit{cusp}}$, and $f$ was a chain map).  
Thus, it suffices to check that $H_S$ and $H_T$ are chain maps from $(V(e^2_X), \widehat{d}_-)$ to $(V(e^2_X), d_{k,k+1})$. 

In the notation from (\ref{eq:DifferentialST}),  with respect to the basis $S_1, \ldots, S_n$ for $V(e^2_X)$, (sheets ordered with descending $z$-coordinate above $e^2_X$) the relevant linear maps have the following matrices:

\centerline{\begin{tabular}{c|c}   
Linear Map &  Matrix \\
\hline 
$d_{k,k+1}$ &  $\widehat{A}_{k,k+1}$ \\
For $X=S$,  & \\
$\widehat{d}_-$ &  $A_S$ \\
$H_S$ & $S = I + \widehat{A}_{k,k+1} E_{k+2,k} + E_{k+1,k+2}$ \\
For $X=T$,  & \\
$\widehat{d}_-$ &  $A_T$ \\
$H_T$ & $T = I + E_{k+1,k+2}$ 
\end{tabular}}

\noindent where the entries of the underlying $(n-2)\times(n-2)$ matrix $A$ are specialized as
\begin{equation}  \label{eq:dS}
a_{i,j} \mapsto \langle d_0S^0_{j}, S^0_{i} \rangle.
\end{equation}
[For the matrix for $H_S$, start with the definition of $H_S$ to compute
\begin{align*}
\left(\prod_{\{i|\langle d_0S^0_{k}, S^0_i \rangle \ne 0\}} (I+E_{i,k}) \right) (I+E_{k+1,k+2}) & = \left(\prod_{i<k}(I+a_{i,k}E_{i,k})\right) (I+E_{k+1,k+2}) \\
 & = I + \sum_{i<k}a_{i,k}E_{i,k} +E_{k+1,k+2} = S.]
\end{align*}  

Thus, we need to verify the matrix identities
\begin{equation}  \label{eq:Akk1}
\widehat{A}_{k,k+1}S= SA_S \quad \mbox{and} \quad \widehat{A}_{k,k+1} T = T A_T.
\end{equation}
In \cite[Lemma 3.4]{RuSu1}, the equations 
\[
\partial S = \widehat{A}_{k,k+1}S + SA_S \quad \mbox{and} \quad \partial T = \widehat{A}_{k,k+1} T + T A_T
\]
are established in the cellular DGA.  Since $\partial T = 0$, and $\partial S = (\widehat{A}_{k,k+1})^2 E_{k+2,k}$, the left hand sides vanish once $a_{i,j}$ is specialized as in (\ref{eq:dS}) (since then $\widehat{A}_{k,k+1}$ is the matrix of a differential).

\end{proof}

\subsection{Augmentations as Chain Homotopy Diagrams}

\begin{definition}  \label{def:CHD}
A {\bf Chain Homotopy Diagram} for $(L,\mathcal{E})$  is a triple $\mathcal{D} =(\{d_\alpha\}, \{f_\beta\}, \{K_\gamma\})$ consisting of
\begin{enumerate}
\item  For each $0$-cell, a differential, $d_\alpha$, making $(V(e^0_\alpha), L(e^0_\alpha), d_\alpha)$ into an ordered complex.
\item  For each $1$-cell, a chain map $f_\beta:(V(e^1_\beta), \widehat{d}_-) \rightarrow (V(e^1_\beta), \widehat{d}_+)$ such that $f_\beta -\mathit{id}$ is strictly upper triangular.  Here, $\widehat{d}_-$ and $\widehat{d}_+$ denote the boundary differentials associated to the $0$-cells at the initial and terminal endpoint of $e^1_\beta$. 
\item  For each $2$-cell, a strictly upper triangular chain homotopy $K_\gamma: (V(e^2_\gamma), \widehat{d}_{v_0}) \rightarrow (V(e^2_\gamma), \widehat{d}_{v_1})$ between the chain maps $\widehat{f_j}^{\eta_j} \circ \cdots \circ \widehat{f_1}^{\eta_1}$ and $\widehat{f_m}^{\eta_m}\circ \cdots \circ \widehat{f_{j+1}}^{\eta_{j+1}}$.  Here, $\widehat{d}_{v_0}$ and $\widehat{d}_{v_1}$ denote the boundary differentials for the vertices $v_0$ and $v_1$;  the $\widehat{f}_{i}$ with $ 1 \leq i \leq j$ (resp. with $j+1 \leq i \leq m$) are the boundary morphisms associated to the  edges of $e^2_\gamma$ as they appear in the counter-clockwise (resp. clockwise) path from $v_0$ to $v_1$ in the domain of a characteristic map for $e^2_\gamma$; and the exponents are $+1$ (resp. $-1$) when the orientation of the $1$-cell agrees (resp. disagrees) with the orientation of this path. 
\end{enumerate}
\end{definition}

Suppose $L$ is equipped with a Maslov potential $\mu$ so that the vector spaces $V(e^d_\alpha)$ are all graded by $\Z/m(L)$.  Given a divisor $\rho \, | \, m(L)$, we say that a CHD $\mathcal{D}$ 
 is {\bf $\rho$-graded} if the maps $d_\alpha$, $f_\beta$, and $K_\gamma$ all have respective degrees $+1$, $0$, and $-1$ mod $\rho$.

\begin{proposition}
\label{prop:AugmentationCHD}

For any $\rho\,|\, m(L)$, there is a bijection between $\rho$-graded augmentations of the Cellular DGA of $(L,\mathcal{E})$ and $\rho$-graded Chain Homotopy Diagrams for $(L,\mathcal{E})$.
\end{proposition}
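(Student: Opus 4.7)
The plan is to exhibit the bijection via the matrix-to-linear-map dictionary: an augmentation $\epsilon$ evaluated on the generator matrices $A$, $I+B$, $C$ produces scalar matrices which are exactly the matrices (in the basis $L(e^d_\alpha)$, ordered compatibly with $\prec$) of the CHD maps $d_\alpha$, $f_\beta$, $K_\gamma$.  Concretely, I would set $d_\alpha := \epsilon(A)$, $f_\beta := I + \epsilon(B)$, $K_\gamma := \epsilon(C)$ in one direction, and $\epsilon(a^\alpha_{p,q}) := \langle d_\alpha S^\alpha_q, S^\alpha_p\rangle$ (and analogously for $b$- and $c$-generators) in the other direction.  Since $A$, $B$, $C$ are strictly upper triangular with respect to $\prec$, so are $d_\alpha$, $f_\beta-\mathit{id}$, $K_\gamma$, and the two assignments are manifestly inverse to each other on generators; the substance of the proposition is that the augmentation equation $\epsilon\circ\partial=0$ translates term-by-term into the three CHD axioms.

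For the verification I would apply $\epsilon$ (an algebra map) to the matrix identities (\ref{eq:differential}).  From $\partial A = A^2$ one gets $d_\alpha^2 = \epsilon(A)^2 = \epsilon(\partial A) = 0$, making $(V(e^0_\alpha), L(e^0_\alpha), d_\alpha)$ an ordered complex.  From $\partial B = A_+(I+B) + (I+B)A_-$ one gets $\epsilon(A_+)f_\beta + f_\beta \epsilon(A_-) = 0$, which is the chain-map condition once we identify the $\epsilon$-image of a boundary matrix with the corresponding boundary differential.  The latter identification is direct bookkeeping: on the $i(V(e^0_\beta))$ block, $\epsilon(X) = d_\beta$ by construction, while the $2{\times}2$ cusp block $\left[\begin{smallmatrix} 0 & 1 \\ 0 & 0 \end{smallmatrix}\right]$ in the boundary matrix is exactly the matrix of $d_{\mathit{cusp}}$, so $\epsilon(X) = \widehat{d}_\beta$ as required.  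The analogous verification for $\partial C$ yields precisely the chain-homotopy relation between the two boundary compositions $\widehat{f_j}^{\eta_j} \cdots \widehat{f_1}^{\eta_1}$ and $\widehat{f_m}^{\eta_m}\cdots \widehat{f_{j+1}}^{\eta_{j+1}}$; here one uses that the cusp block of each $B_i$ is the zero matrix, so $\epsilon(I+B_i)$ restricts to the identity on $V_{\mathit{cusp}}$ and to $f_i$ on $i(V(e^1_i))$, reproducing $\widehat{f_i} = f_i \oplus \mathit{id}$.

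At upward swallowtail points the cellular differential substitutes $A_S$ or $A_T$ for the boundary matrix at the swallowtail $0$-cell and multiplies the outgoing cusp-edge factors by $S$ or $T$.  Under $\epsilon$, comparison of (\ref{eq:DifferentialST}) with the definitions (\ref{eq:dTdef}), (\ref{eq:dSdef}), (\ref{eq:HXdef}) gives $\epsilon(A_S) = \widehat{d}_S$, $\epsilon(A_T) = \widehat{d}_T$, $\epsilon(S) = H_S$, $\epsilon(T) = H_T$, so the chain-map and chain-homotopy translations go through verbatim; the needed matrix identities for this step are already assembled in Lemma \ref{lem:XST}.  Finally, since $|a_{p,q}| = \mu(S_p)-\mu(S_q)-1$, $|b_{p,q}| = \mu(S_p)-\mu(S_q)$, $|c_{p,q}| = \mu(S_p)-\mu(S_q)+1$, the condition that $\epsilon$ vanish on generators of nonzero degree mod $\rho$ is exactly equivalent, under the dictionary above, to $d_\alpha$, $f_\beta-\mathit{id}$, $K_\gamma$ having respective degrees $+1$, $0$, $-1$ mod $\rho$ on the Maslov-graded basis.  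I expect the main obstacle to be purely notational: tracking that the row/column placements used to build boundary matrices match the direct-sum decomposition $V(e^d_\alpha) = i(V(e^{d'}_\beta)) \oplus V_{\mathit{cusp}}$ at every incidence, and threading the swallowtail substitutions through the $C$-term of (\ref{eq:differential}); neither step introduces new content beyond what is already packaged in Lemma \ref{lem:XST}.
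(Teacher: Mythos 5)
Your proposal is correct and follows essentially the same route as the paper's proof: the same matrix-coefficient dictionary between triples $(\{d_\alpha\},\{f_\beta\},\{K_\gamma\})$ and algebra maps $\epsilon$, the same term-by-term translation of $\epsilon\circ\partial=0$ for the $A$-, $B$-, and $C$-equations into the three CHD axioms (including the identification of $\epsilon$ of boundary matrices with boundary differentials/morphisms and the nilpotence argument for inverting $I+B_i$), and the same appeal to Lemma \ref{lem:XST} to handle $A_S$, $A_T$, $S$, $T$ at swallowtail points. No gaps to report.
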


\begin{proof}
First, consider triples of linear maps $(\{d_\alpha\}, \{f_\beta\}, \{K_\gamma\})$ with the only restriction being that each $d_\alpha$, $f_\beta-\mathit{id}$, and $K_\gamma$ is strictly upper triangular.  There is a bijection between such triples and the set of all algebra homomorphisms from the cellular DGA $\A$ to $\Z/2$ that arises from replacing a linear map with its matrix with respect 
to $L(e^d_\alpha)$:
\[ (\{d_\alpha\}, \{f_\beta\}, \{K_\gamma\})  \mapsto  \left(\epsilon:\mathcal{A} \rightarrow \Z/2 \right), \]
\[
 \epsilon(a^\alpha_{p,q}) = \langle d_{\alpha}S^\alpha_q, S^\alpha_p\rangle, \quad \epsilon(b^\beta_{p,q}) = \langle f_{\beta}S^\beta_q, S^\beta_p\rangle, \quad \epsilon(c^\gamma_{p,q}) = \langle K_{\gamma}S^\gamma_q, S^\gamma_p\rangle.
\] 
[This is a bijection because all matrix coefficients of the $(\{d_\alpha\}, \{f_\beta-id\}, \{K_\gamma\})$ corresponding to pairs $S_p$ and $S_q$ for which there is no corresponding generator of $\mathcal{A}$ are forced to be $0$ by the strictly upper triangular condition, eg. the generator $a^\alpha_{p,q}$ exists if and only if  $S^\alpha_p \prec S^\alpha_q$.]

The above correspondence restricts to a bijection between CHDs and augmentations since the requirements on the maps $d_\alpha$, $f_\beta$, and $K_\gamma$ from the definition of CHD are equivalent to the matrix equations arising from applying $\epsilon \circ d =0$ to the corresponding $A$, $B$, and $C$ matrices.  In more detail, we have:
\begin{enumerate}
\item For $A = (a^\alpha_{p,q})$, 
\[
\epsilon \circ \partial (A) = 0   \quad \Leftrightarrow \quad \left[\epsilon(A)\right]^2 = 0 \quad \Leftrightarrow \quad (\partial_\alpha)^2=0, 
\]
i.e. $\epsilon \circ \partial (A) = 0$ if and only if $(V(e^0_\alpha), d_\alpha)$ is a chain complex.
\item For $B= (b^\beta_{p,q})$, 
\[
\epsilon \circ \partial (B) = 0  \quad \Leftrightarrow \quad \epsilon(A_+)(I+\epsilon(B)) = (I+ \e(B)) \e(A_-) \quad \Leftrightarrow \quad \widehat{d}_+ \circ f_\beta = f_\beta \circ \widehat{d}_-,
\]
i.e. $\epsilon \circ \partial (B) = 0$ if and only if $f_\beta: (V(e^1_\beta), \widehat{d}_-) \rightarrow (V(e^1_\beta), \widehat{d}_+)$ is a chain map.  [Note that $(I+\epsilon(B))$ is the matrix of $f_\beta$.  It is also important to observe that the $\epsilon(A_{\pm})$ are the matrices for the boundary differentials $\widehat{d}_\pm$. This is readily verified from comparing the {\it boundary matrices} used in defining $\partial B$ with the {\it boundary differentials} associated to $V(e^1_\beta)$.  In particular, (i) the $2\times 2$ blocks $\left[ \begin{array}{cc} 0 & 1 \\ 0 & 0 \end{array} \right]$ inserted when forming $A_\pm$ reflect the definition of $\widehat{d}_\pm$ on the subspace $V_{\mathit{cusp}} \subset V(e^1_\beta)$, and (ii) as already observed in Lemma \ref{lem:XST} when $e^1_\beta$ is the crossing $1$-cell at a swallowtail point, $A_- = A_S$ (or $A_T$ depending on whether the chosen total ordering of $L(e^1_\beta)$ used to form $B$ agrees with the ordering above the $S$ or $T$ $2$-cell) is the matrix of the boundary differential $\widehat{d}_-$.]  
\item  For $C= (c^\gamma_{p,q})$, considering first the case that $e^2_\gamma$ does not border swallowtail points, 
\begin{align*}
& \epsilon \circ \partial (C) = 0 \\
 \Leftrightarrow \quad & \e(A_{v_1})\e(C) + \e(C) \e(A_{v_0}) = (I+\e(B_{j}))^{\eta_j} \cdots (I+ \e(B_1))^{\eta_1} + (I+\e(B_{m}))^{\eta_m} \cdots (I+\e(B_{j+1}))^{\eta_{j+1}} \\
\Leftrightarrow \quad & 
\widehat{d}_{v_1}K_\gamma + K_\gamma  \widehat{d}_{v_0} = \widehat{f_j}^{\eta_j} \circ \cdots \circ \widehat{f_1}^{\eta_1} - \widehat{f_m}^{\eta_m}\circ \cdots \circ \widehat{f_{j+1}}^{\eta_{j+1}},
\end{align*}
i.e. $\epsilon \circ \partial (C) = 0$ if and only if  $K_\gamma: (V(e^2_\gamma), \widehat{d}_{v_0}) \rightarrow (V(e^2_\gamma), \widehat{d}_{v_1})$ is a chain homotopy between the chain maps $\widehat{f_j}^{\eta_j} \circ \cdots \circ \widehat{f_1}^{\eta_1}$ and $\widehat{f_m}^{\eta_m}\circ \cdots \circ \widehat{f_{j+1}}^{\eta_{j+1}}$.  [We used that since the $B_i$ are nilpotent, 
\[
\e(I+B_i)^{-1} = \e(I+B_i + B_i^2 + \cdots) = I + \e(B_i) + \e(B_i)^2+\cdots = (I+\e(B_i))^{-1}.
\]
Again, it is important to verify that $\epsilon(A_{v_i})$ (resp. $I+ \epsilon(B_i)$) is the matrix of the boundary differential associated to $v_i$ 
(resp. boundary morphism for the corresponding $f_i$).  The case of boundary differentials is as before, while the   $\left[ \begin{array}{cc} 0 & 0 \\ 0 & 0 \end{array} \right]$ inserted into $B_i$ is consistent with $\widehat{f}_i$ acting as the identity on the component $V_{cusp} \subset V(e^2_\gamma)$.]

In the case that $e^2_\gamma$ contains the $S$ or $T$ corner at a swallowtail point, the definition of the $\widehat{f}_i$ for the cusp edge bordering the corner acquires a factor of $H_S$ or $H_T$, while an $S$ or $T$ matrix is inserted at the corresponding part of the product in the definition of $\partial C$. As observed in Lemma \ref{lem:XST}, $S$ and $T$ are the respective matrices of $H_S$ and $H_T$, so it follows that $\e \circ \partial(C) =0$ is still equivalent to $K_\gamma$ being a chain homotopy of the required form.   
\end{enumerate} 
\end{proof}

\section{Morse Complex 2-families}
\label{sec:Morse2CHD}

 In this section, we introduce Morse complex 2-families (abbr. MC2Fs) which are detailed combinatorial approximations of generating families.  In Section \ref{sec:Comb}, using an MC2F we produce combinatorial continuation maps associated to paths in the base surface, again in analogy with Morse theory.  
  Finally, in Proposition \ref{prop:GFMorse2} we show that pairing a generating family $F$ with an appropriate family of gradient-like vector fields produces an  MC2F, and we observe how properties of $F$ are reflected in the associated continuation maps.

\subsection{Definition of MC2Fs}

Let $L \subset J^1M$  with Maslov potential $\mu$ have generic front and base projections.  We write 
\[
\Sigma = \Pi_B( \Sigma_{\mathit{cusp}} \cup \Sigma_{\mathit{st}} \cup \Sigma_{\mathit{cr}})
\]
 for the base projection of the singular set of $L$ (cusps, swallowtail points, and crossing arcs).
Let $R_\nu \subset M \setminus \Sigma$ be a region, i.e. an open connected subset.
Following earlier definitions, we let $L(R_\nu)$ denote the set of sheets of $L$ above $R_\nu$, i.e. components of $\pi_B^{-1}(R_\nu) \cap L$.  
Sheets in $L(R_\nu)$ are totally ordered by descending $z$-coordinate, so we always index sheets as $L(R_\nu) = \{S^\nu_1, S^\nu_2, \ldots, S^\nu_n\}$ with $z(S_i) > z(S_{i+1})$ pointwise.  The $\Z/2$-vector space spanned by $L(R_\nu)$ is denoted $V(R_\nu)$, and
 is assigned a $\Z/m$-grading via the Maslov potential.

\begin{dfn}
\label{def:CM2F}  Let $\rho \, | \, m(L)$.  A $\rho$-graded {\bf  Morse complex 2-family} (abbrv. {\bf MC2F}), $\mathcal{C}$, for $L$ is a triple $\mathcal{C}= (\{d_\nu\},H,H_{-1})$ which consists of the following data:

\begin{enumerate}
\item A {\bf super-handleslide set}, $H_{-1}$, which is a finite set of points in $M \setminus \Sigma$.
 Each point $x \in H_{-1}$ is assigned upper and lower 
lifts,
$u_x, l_x \in L$ satisfying 
\[
z(u_x) > z(l_x), \quad \mbox{and} \quad \mu(u_x) -\mu(l_x) =-1 \,(\mbox{mod $\rho$}).  
\]
\item A {\bf handleslide set} which is an immersed compact $1$-manifold $H: X \rightarrow M$ where $X = \bigsqcup_i X_i$ with each $X_i = S^1$ or $[0,1]$.  
When restricted to the interior of $X$, $H$ is transverse to (the strata of) $\Sigma$; is disjoint from $H_{-1}$; and the only self-intersections are transverse double points in $M \setminus \Sigma$.   Moreover, $H$ is equipped with continuous upper and lower endpoint lifts, $u,  l: X \rightarrow L$ 
 satisfying
\[
\quad z(u)> z(l), \quad \quad \mu(u) -\mu(l) =0 \, (\mbox{mod $\rho$}).
\]
\item Set
\[
\Sigma_\mathcal{C} = \Sigma \cup H(X) \cup H_{-1}.
\]For each connected component $R_\nu \subset M \setminus \Sigma_\mathcal{C}$,
 the vector space $V(R_\nu)$ is assigned a differential $d_\nu$ making
 $(V(R_\nu), L(R_\nu), d_\nu)$ into a $\rho$-graded ordered complex, i.e.  $d_\nu$ is strictly upper triangular and
 \[
 \mbox{deg}(d_\nu) =+1 \, (\mbox{mod $\rho$}).
 \] 

\end{enumerate}

The data $(\{d_\nu\},H,H_{-1})$ is subject to the two Axioms \ref{ax:endpoints} and \ref{ax:iso}.

\end{dfn}

Before stating Axioms \ref{ax:endpoints} and \ref{ax:iso} we introduce some terminology.  
When considering the handleslide set of $\mathcal{C}$ locally in $M\setminus \Sigma$, a handleslide arc whose upper (resp. lower) lift is $S_{i}$ (resp. $S_j$) is called an  {\bf $(i,j)$-handleslide arc}.   Note that the indices $i$ and $j$ are not globally well-defined for a given component of $H$, since they may change when the image of $H$ crosses $\Sigma$.  The phrase {\bf $(i,j)$-super-handleslide point} has a similar meaning.

\begin{figure}

\quad

\quad

\labellist
\small
\pinlabel (1) [r] at -16 64
\pinlabel $h_{k,j}$ [b] at 16 116
\pinlabel $h_{i,j}$ [b] at 48 124
\pinlabel $h_{i,k}$ [b] at 128 116
\pinlabel $S_i$ [l] at 586 112
\pinlabel $S_k$ [l] at 586 64
\pinlabel $S_j$ [l] at 586 16
\endlabellist
\centerline{\includegraphics[scale=.6]{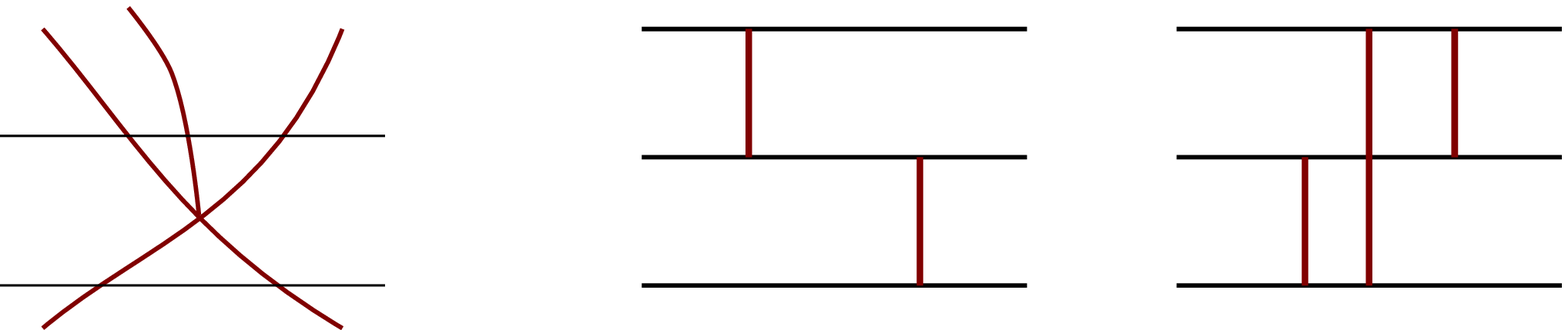}} 

\quad

\quad

\quad

\labellist
\small
\pinlabel (2) [r] at -16 64
\pinlabel $S_{u}$ [l] at 586 68
\pinlabel $S_{l}$ [l] at 586 34
\endlabellist

\centerline{\includegraphics[scale=.6]{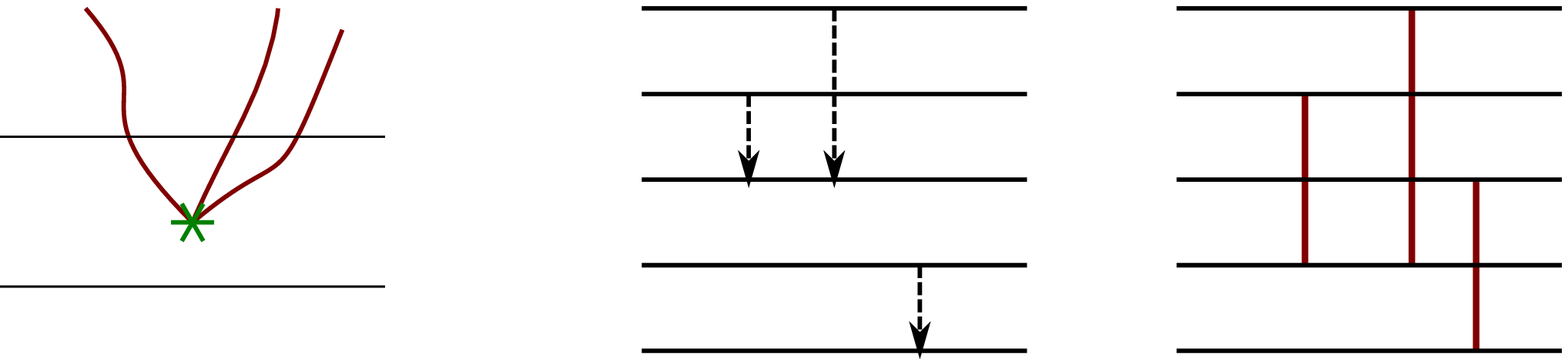}}

\quad

\quad

\quad

\labellist
\small
\pinlabel (3) [r] at -16 152
\pinlabel $x_1$ [t] at 32 -2
\pinlabel $x_2$ [r] at -2 32
\pinlabel $x_1$ [t] at 272 -2
\pinlabel $z$ [r] at 238 32
\pinlabel $H_S$ [t] at 486 36
\pinlabel $H_T$ [t] at 542 36
\pinlabel $S$ [b] at 40 220
\pinlabel $T$ [b] at 104 220
\endlabellist

\centerline{\includegraphics[scale=.6]{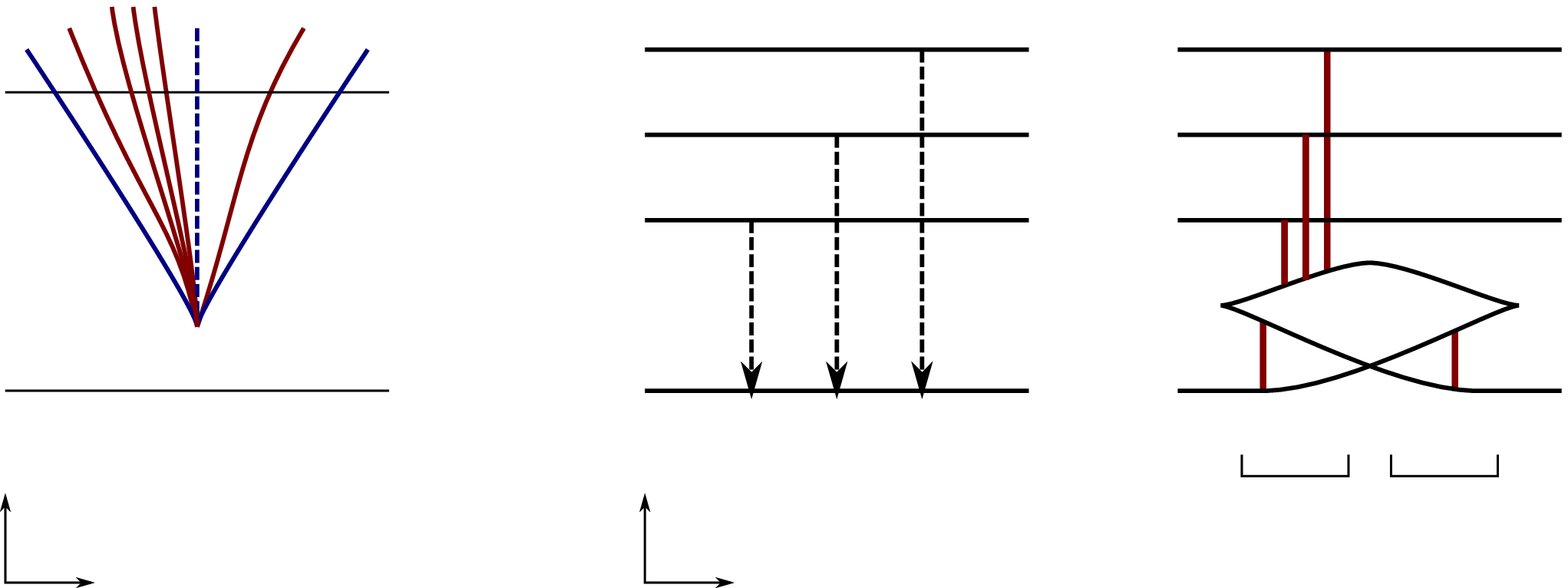}}

\caption{The three types of endpoints for handleslide arcs in $H$ allowed by Axiom \ref{ax:endpoints}.  The left column depicts the base projection (to $M$) of $H$ (in red), $H_{-1}$ (a green star) and the singular set, $\Sigma$, (in blue).  
The center and right column depict two slices of the front projection; a dotted black arrow  from $S_i$ to $S_j$ indicates that $\langle \partial S_j, S_i \rangle = 1$.  
The three types of endpoints allowed are:  (1) at double points of $H$,  (2) at super-handleslide points, and (3) at swallowtail points.}   
\label{fig:Endpoints}
\end{figure}

\begin{ax}
\label{ax:endpoints}
Endpoints of handleslide arcs (for components of $H$ with $X_i= [0,1]$) are as follows; see also Figure \ref{fig:Endpoints}.

\begin{enumerate}
\item Let $x\in M\setminus \Sigma$ be a double point of $H$ where for some $i<m<j$ an $(i,m)$-handleslide arc intersects an $(m,j)$-handleslide arc.  Then, a unique  $(i,j)$-handleslide arc has a unique endpoint at $x$.



\item Suppose  $p \in H_{-1}$ is a $(u,l)$-super-handleslide point, and
let $d_\nu$ be the differential associated to any region of $M \setminus \Sigma_\mathcal{C}$ adjacent to $p$.
Then, for any $i < u< l < j$, at  $p$ there are $\langle  d_\nu S_{u},  S_i \rangle$ endpoints of $(i,l)$-handleslide arcs; and  
$\langle  d_\nu S_j,  S_{l}\rangle$ endpoints of $(u,j)$-handleslide arcs.

\item Suppose  $p \in M$ is an upward swallowtail point such that outside (resp. inside) the swallowtail region $L$ has $n-2$ (resp. $n$) sheets, and such that sheet $S_k$ (resp. sheets $S_k$, $S_{k+1}$, and $S_{k+2}$) contains the swallowtail point in their closure.  

Denote by $d_{0}$ the differential associated to the $n-2$ sheeted region of $M\setminus \Sigma_\mathcal{C}$ near $p$.
Then, at $p$ there are $\langle d_0 S_{k}, S_{i}\rangle$ endpoints of $(i,k)$-handleslide arcs locally contained within the swallowtail region
 as well as $2$ additional $(k+1,k+2)$-handleslide arcs, one on each side of the crossing locus near $p$.

The downward swallowtail case is similar, but vertically reflected.  
\end{enumerate}

\end{ax}

\begin{ax}

\label{ax:iso}

When two regions $R_0$ and $R_1$ share a border along an arc, $A \subset \Sigma_\mathcal{C}$, the complexes $(V(R_0), d_0)$ and $(V(R_1), d_1)$ are related as follows:

\begin{enumerate}

\item Suppose $A$ belongs to an $(i,j)$-handleslide arc.  
We require that the handleslide map 
\[
h_{i,j}: (V(R_0), d_0) \stackrel{\cong}{\rightarrow} (V(R_1), d_1)
\]
is a chain isomorphism.

\item Suppose $A$ belongs to the crossing locus.  We have a bijection $L(R_0)\cong L(R_1)$ by identifying sheets whose closures (in $L$) intersect above $A$.  We require that the induced isomorphism $V(R_0) \cong V(R_1)$ is an isomorphism of complexes.  

Equivalently, 
label sheets above $R_0$ and $R_1$ with descending $z$-coordinate as $S^0_1, \ldots, S^0_n$ and $S^1_1,\ldots, S^1_n$.  If sheets $S^{i}_k$ and $S^i_{k+1}$ meet at the crossing arc above $A$, we require that the map
\[
Q: (V(R_0), d_0) \stackrel{\cong}{\rightarrow} (V(R_1), d_1),  \quad Q(S^0_{i}) = S^1_{\tau(i)}
\]
is an isomorphism where $\tau = (k \,\, k+1)$ denotes the transposition.

\item Suppose $A$ belongs to the cusp locus.  We require that the complexes are related 
as in the boundary differential construction of Section \ref{sec:BoundaryDiff}.

In more detail, suppose that above $A$ the sheets $S^1_{k}$ and $S^1_{k+1}$ meet at a cusp edge.  Include $V(R_0)$ into $V(R_1)$ via 
\[
S^{0}_i  \mapsto \left\{\begin{array}{lr} S^1_{i}, & i<k \\ S^1_{i+2}, & i \geq k, \end{array} \right.
\]
and write $V_{\mathit{cusp}}= \mbox{Span}_{\Z/2}\{S^1_{k}, S^1_{k+1}\}$.  We require that, with respect to the direct sum decomposition $V(R_1)=V(R_0)\oplus V_{\mathit{cusp}}$, the differential is $d_1= d_0 \oplus d_{\mathit{cusp}}$ where $d_{\mathit{cusp}}S_{k+1} = S_{k}$.

\end{enumerate}

\end{ax}

We record some observations about the definition.

\begin{observation}
\label{ob:CM2F}
\begin{enumerate}

\item   For Axiom \ref{ax:endpoints} (2) about the appearance of $H$ near a super-handleslide $p\in H_{-1}$ it suffices to check the condition for a single choice of adjacent region at $p$.  It then follows from Axiom \ref{ax:iso} (1) that the condition will hold for all adjacent regions, since the differentials associated to different regions bordering $p$ are related by a sequence of handleslide maps that do not change the matrix coefficients $\langle d S_u, S_i \rangle$ and $\langle d S_j, S_l\rangle$ with $i<u<l<j$. 

\item If sheets $S_k$ and $S_{k+1}$ cross along at least one boundary arc of a region $R_\nu$ then $\langle d_vS_{k+1}, S_{k}\rangle = 0 $.  [This follows from Axiom \ref{ax:iso} (2).  Otherwise, the differential in the neighboring region would not be upper triangular.]

\item If sheets $S_k$ and $S_{k+1}$ meet at a cusp along at least one boundary arc of a region $R_\nu$ then $\langle dS_{k+1}, S_{k}\rangle  =1$.  [Use Axiom \ref{ax:iso} (3).]

\item  An $(i,j)$-handleslide arc 
cannot intersect a crossing locus
involving sheets $S_i$ and $S_j$, and cannot cross a cusp edge involving $S_i$ or $S_j$.  [This is because the lifts satisfy the inequality $z(u) >z(l)$, and cannot be continuously extended past a cusp point.]

\item Given a swallowtail point $p$ and a differential $d_0$ for the outside of the swallowtail region, once handleslide arcs are placed near $p$ as required in Axiom \ref{ax:endpoints} (3), at least locally, there is always a unique way to assign differentials $\{d_\nu\}$ to the regions within the swallowtail region so that Axiom \ref{ax:iso} holds.  See Proposition \ref{prop:Extend2}. 

\end{enumerate}

\end{observation}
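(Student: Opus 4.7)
The observation bundles five independent assertions, and I would treat them one by one, each drawing on Axioms \ref{ax:endpoints} and \ref{ax:iso}.

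For (1), the plan is to check directly that conjugation $A \mapsto (I+E_{u',l'})A(I+E_{u',l'})$ by any handleslide map emanating from a $(u,l)$-super-handleslide point $p$ preserves the entries $A_{i,u}$ (for $i<u$) and $A_{l,j}$ (for $l<j$). The handleslides emanating from $p$ dictated by Axiom \ref{ax:endpoints}(2) are of type $(i',l)$ with $i'<u$ and of type $(u,j')$ with $l<j'$. A short matrix computation shows the correction to $A_{i,u}$ under conjugation by $h_{i',l}$ involves only $A_{l,u}$ (which vanishes since $l>u$ and $A$ is strictly upper triangular) and terms with $\delta_{u,l}$ (which vanishes since $l\ne u$); an analogous calculation handles the other handleslide type and the entry $A_{l,j}$. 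Because neighboring regions around $p$ are connected through such handleslide conjugations (and across $\Sigma$-arcs not touching indices $u,l$, the relevant entries are unaffected by Axiom \ref{ax:iso}(2)--(3)), checking the endpoint count for one adjacent region forces it for all.

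For (2) and (3), the plan is to read off the claims directly from Axiom \ref{ax:iso}. In (2), propagating across the crossing locus conjugates the differential by the transposition $Q=(k,k+1)$; if $\langle d_\nu S_{k+1}, S_k\rangle$ were nonzero then $QdQ^{-1}$ would have a nonzero sub-diagonal entry in the descending-$z$ labeling of the neighboring region, contradicting strict upper triangularity. In (3), on the cusping side the axiom provides the direct-sum decomposition $d = d_0 \oplus d_{\mathit{cusp}}$ with $d_{\mathit{cusp}}S_{k+1}=S_k$, so $\langle d S_{k+1}, S_k\rangle =1$ is immediate. For (4), the argument is topological: a crossing locus that swaps $S_i$ and $S_j$ (which requires $|i-j|=1$) would force the continuous lifts $u,l$ of the handleslide arc to switch $z$-ordering, violating $z(u)>z(l)$; and a cusp edge involving $S_i$ or $S_j$ terminates the corresponding sheet, so the lift cannot be continuously extended past it.

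For (5), the plan is to invoke Proposition \ref{prop:Extend2} (to be established later), which shows that after handleslide arcs near a swallowtail point are placed as in Axiom \ref{ax:endpoints}(3), there is a unique way to propagate the outside differential $d_0$ across these arcs and the crossing locus using Axiom \ref{ax:iso}, and the resulting local differentials agree with the formulas (\ref{eq:dTdef}) and (\ref{eq:dSdef}). The main obstacle is part (1), whose rigorous justification requires careful bookkeeping of which handleslide arcs emanate from a super-handleslide and the effect of each conjugation on the distinguished matrix entries; once this calculation is organized, the remaining parts reduce to essentially immediate consequences of the two axioms.
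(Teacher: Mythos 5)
Your proposal is correct and takes essentially the same route as the paper's own (very terse) bracketed justifications: a direct matrix check that conjugation by the handleslide maps emanating from the super-handleslide point preserves the entries $\langle dS_u,S_i\rangle$ and $\langle dS_j,S_l\rangle$ for (1), immediate readings of Axiom \ref{ax:iso} (2) and (3) for (2)--(3), the continuity/$z$-ordering argument for (4), and an appeal to Proposition \ref{prop:Extend2} for (5). The only cosmetic remark is that since $p\in H_{-1}\subset M\setminus\Sigma$, the regions adjacent to $p$ are separated only by the handleslide arcs ending at $p$, so your extra consideration of crossing/cusp arcs in part (1) is unnecessary (though harmless).
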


\begin{example}
A $0$-graded MC2F for a Legendrian in $J^1 \R^2$ is pictured in Figure \ref{fig:MC2FEx}.  The two green *'s are $(2,3)$-super handleslide points.  The lower red arc is a $(1,3)$-handleslide arc.  The upper red arc is a $(2,4)$-handleslide arc (resp. $(3,4)$-handleslide arc) when it is outside (resp. inside) the crossing circle. The differentials $d_\nu$ are indicated by the dotted arrows.
\end{example}

\begin{figure}
\labellist
\small
\pinlabel $x_1$ [t] at 0 32
\pinlabel $x_2$ [t] at 64 54
\pinlabel $z$ [b] at 18 110
\endlabellist
\centerline{\includegraphics[scale=.5]{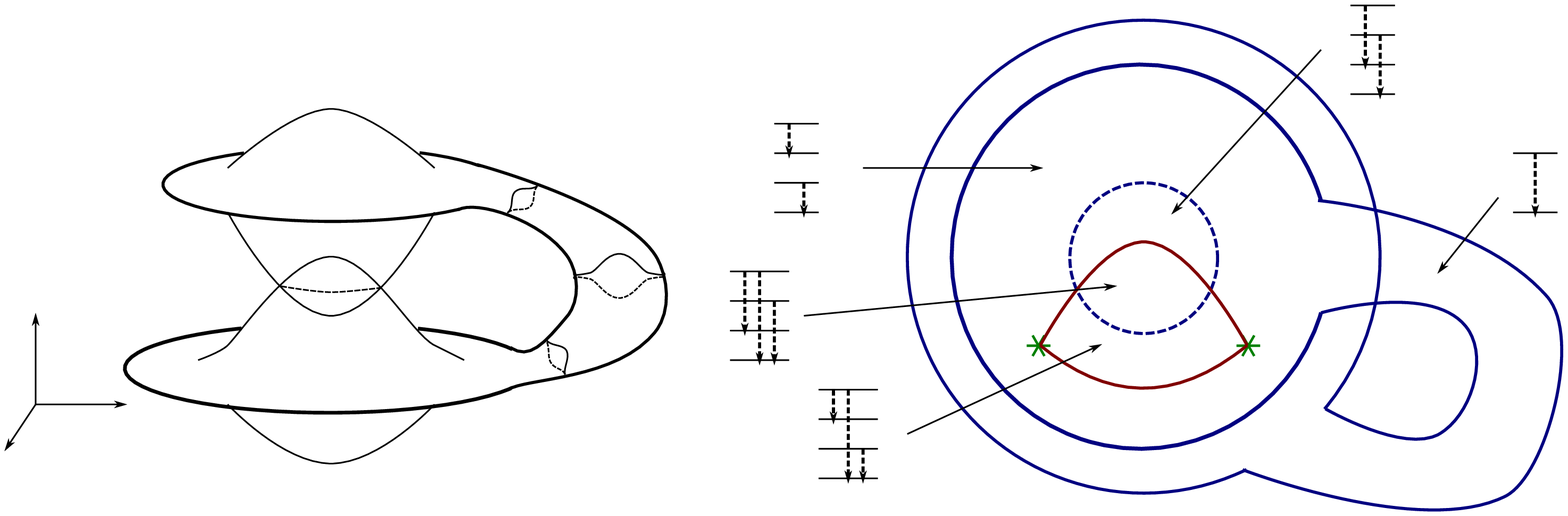}}

\caption{An example of an MC2F (right) for a Legendrian $L \subset J^1\R^2$ with pictured front diagram (left).}
\label{fig:MC2FEx}
\end{figure}

\begin{remark}  
\label{rmk:GromovCoMorse}
\begin{enumerate}
\item The definition of an MC2F is based on the generic bifurcations of Morse complexes in $2$-parameter families of functions; see Proposition \ref{prop:GFMorse2}.  That the  differentials $d_\nu$ have degree $+1$ (mod $\rho$) corresponds to working with Morse cohomology complexes rather than homology. Here, the grading is given by the Morse index, but the differential counts positive gradient trajectories rather than negative trajectories.  

\item The reader familiar with the Gromov compactness/gluing proof of $d^2=0$ in Morse or Floer theory can interpret  Axiom \ref{ax:endpoints} (1) and (2) as gluing various configurations of broken trajectories to produce boundaries of the moduli space of handleslide trajectories. 
\end{enumerate}
\end{remark}


\subsection{Combinatorial continuation maps} \label{sec:Comb}

Suppose that $\mathcal{C}= (\{d_\nu\},H,H_{-1})$ is a MC2F for $L \subset J^1M$.  In the following, using $\mathcal{C}$ we associate continuation maps to paths in $M$.  For paths that are disjoint from the singular set of $L$ the continuation maps have properties at the chain level that will be important for constructing a CHD from an MC2F.  

\medskip

Let $\sigma:[0,1] \rightarrow M$ be a smooth path that is 
transverse to the strata of $\Sigma_\mathcal{C}$. 
Suppose $\sigma(i)$ lies in the component $R_i \subset M \setminus \Sigma_\mathcal{C}$ for $i=0,1$.  
 We define the {\bf continuation map}
\begin{equation}
\label{eq:ContinuationMap}
f(\sigma):(V(R_0),d_0) \rightarrow (V(R_1), d_1)
\end{equation}
to be the composition 
\[
f(\sigma) =  f_m\cdots f_1
\] with the maps $f_1, \ldots, f_m$ associated to those $0 < s_1 < \ldots < s_m <1$ where $\sigma(s_l)$ intersects $\Sigma_\mathcal{C}$ 
as follows:
\begin{enumerate}
\item When $\sigma(s_l)$ intersects an $(i,j)$-{\it handleslide},  
\[
f_l = h_{i,j}.
\]
\item When $\sigma(s_l)$ intersects a {\it crossing}, $f_l$ is the map $Q$ from Axiom \ref{ax:iso} (2).
\item When $\sigma(s_l)$ intersects a {\it cusp}, notate the regions bordering the cusp edge as  $R'$ and $R''$ so that the two cusp sheets exist above $R''$ and not above $R'$.  Write $V(R'') = V(R') \oplus V_{\mathit{cusp}}$.  If $\sigma$ passes from $R'$ to $R''$ as $s$ increases, then $f_l:V(R') \rightarrow V(R'')$ is the inclusion.  If $\sigma$ passes from $R''$ to $R'$, then $f_l:V(R'') \rightarrow V(R')$ is the projection.  
\end{enumerate}


\begin{proposition}  \label{prop:ContinuationMap}
Let $\sigma,\tau: [0,1] \rightarrow H$ be paths transverse to $\Sigma_{\mathcal{C}}$.
\begin{enumerate}
\item The continuation map $f(\sigma)$ is a quasi-isomorphism.
\item If $\sigma(1) = \tau(0)$, then 
\[
f(\sigma * \tau) = f(\tau) \circ f(\sigma).
\]

\item If $\sigma$ and $\tau$ are path homotopic (i.e. homotopic relative endpoints) in $M$, then $f(\sigma), f(\tau) : (V(R_0),d_0)\rightarrow (V(R_1),d_1)$ are chain homotopic. 
\end{enumerate}

If $\sigma$ and $\tau$ are disjoint from the singular set of $L$, i.e. disjoint from crossing and cusp arcs, then: 

\begin{enumerate}
\item[(4)] The matrix of $f(\sigma) - \mathit{id}$ is strictly upper-triangular.

\item[(5)] The inverse path $\sigma^{-1}(s) = \sigma(1-s)$ has
\[
f(\sigma^{-1}) = (f(\sigma))^{-1}.
\]

\item[(6)]  If $\sigma$ and $\tau$ are path homotopic via a homotopy whose image is also disjoint from crossings and cusps, then there is a strictly upper-triangular homotopy operator, $K:V(R_0) \rightarrow V(R_1)$ between $f(\tau)$ and $f(\sigma)$,
\[
f(\sigma)-f(\tau) = d_1 K + K d_0.  
\]
If the image of the homotopy is also disjoint from super-handleslide points, then $f(\sigma) = f(\tau)$.
\end{enumerate}
When $\mathcal{C}$ is $\rho$-graded, all of the above continuation maps (resp. homotopy operators) have degree $0$ (resp. $-1$) mod $\rho$.
\end{proposition}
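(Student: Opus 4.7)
My plan is to dispatch the immediate combinatorial claims (1), (2), (4), (5) first and then tackle the path-homotopy statements (3) and (6), which carry the real content. Claim (2) is immediate since both sides expand to the same ordered composition over the union of the two intersection sets. For (4), when $\sigma$ misses cusps and crossings, the sheets over (a neighborhood of) the image of $\sigma$ are consistently labeled by descending $z$-coordinate, so each factor is a handleslide $h_{i,j} = I + E_{i,j}$ with $i<j$, and the composition is of the form $I + (\text{strictly upper triangular})$. For (5), I would use $h_{u,l}^{-1} = h_{u,l}$ in characteristic two: reversing $\sigma$ reverses the order in which the handleslides are met, giving $f(\sigma^{-1}) = f_1 \cdots f_m = (f_m \cdots f_1)^{-1} = f(\sigma)^{-1}$. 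For (1), each elementary factor is a quasi-isomorphism: handleslide maps and the crossing swap $Q$ are chain isomorphisms by Axiom \ref{ax:iso}(1) and (2), while cusp inclusions and projections are quasi-isomorphisms since by Axiom \ref{ax:iso}(3) the complementary complex $V_\mathit{cusp}$ is acyclic ($d_\mathit{cusp}S_{k+1}=S_k$); and compositions of quasi-isomorphisms are quasi-isomorphisms.

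For (3) and (6) I would argue via generic path homotopy. After a small perturbation, a homotopy $\{\sigma_t\}$ from $\sigma$ to $\tau$ is transverse to the strata of $\Sigma_\mathcal{C}$ except at finitely many times $t_1 < \cdots < t_N$, each hosting a single codimension-one bifurcation. The plan is to enumerate these local moves and check that each produces a chain-homotopic change in $f(\sigma_t)$: (a) tangencies with a handleslide arc, handled via $h_{i,j}^2 = I$; (b) tangencies with a crossing or cusp arc, handled via $Q^2 = I$ and via $\pi \iota = \mathit{id}_{V(R')}$ (respectively, $\iota\pi$ is chain homotopic to $\mathit{id}_{V(R'')}$ using acyclicity of $V_\mathit{cusp}$); (c) passage through a double point of $H$, where the identity $h_{i,m}h_{m,j} = h_{m,j} h_{i,m} h_{i,j}$ matches the extra $(i,j)$-endpoint prescribed by Axiom \ref{ax:endpoints}(1) to give equality; (d) passage through a super-handleslide $p\in H_{-1}$, where a direct computation shows that the prescribed endpoints from Axiom \ref{ax:endpoints}(2) assemble so that the local change in $f(\sigma_t)$ equals $d_1 E_{u,l} + E_{u,l} d_0$, an explicit chain homotopy with $K = E_{u,l}$; (e) passage through a swallowtail point, controlled by Axiom \ref{ax:endpoints}(3) together with the definitions of $H_S, H_T$ from (\ref{eq:HXdef}) and Lemma \ref{lem:XST}; and (f) purely $\Sigma$-based moves such as triple points of $\Sigma$, cusp-sheet intersections, and $H$-arcs passing transversely through $\Sigma$-arcs, all compatible with Axiom \ref{ax:iso} and yielding exact equalities. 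Composing these local chain-homotopy changes across all $t_i$ produces a global chain homotopy $f(\sigma) \simeq f(\tau)$, proving (3). The main obstacle is case (d) and the careful verification in (e) that the composite of local handleslide factors and cusp maps recovers the boundary-morphism-with-$H_X$ formula; the axioms are in fact designed so that this bookkeeping works out.

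For (6), the homotopy avoids crossings and cusps, so cases (b), (e), and (f) do not occur, and cases (a) and (c) produce exact equalities; only case (d) yields a nontrivial chain homotopy, and the resulting $K = E_{u,l}$ (with $u<l$ in index) is strictly upper triangular. If super-handleslides are additionally avoided then (d) too is excluded and $f(\sigma) = f(\tau)$. For the concluding grading assertion I would note that handleslide arcs and super-handleslide points are defined so that $\mu(u) - \mu(l) = 0$ or $-1$ modulo $\rho$ respectively; crossing swaps identify sheets of equal Maslov potential (they share a connected component of $L \setminus \Sigma_\mathit{cusp}$); and cusp inclusions/projections are degree $0$. Hence each $f(\sigma)$ has degree $0$ mod $\rho$ and each chain homotopy $K = E_{u,l}$ arising from (d) has degree $\mu(u)-\mu(l) = -1$ mod $\rho$.
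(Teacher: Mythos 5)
Your proposal is correct and follows essentially the same route as the paper: elementary-factor analysis for (1), (2), (4), (5), and a generic homotopy with a case-by-case bifurcation analysis (handleslide tangencies handled by $h_{i,j}^2=\mathit{id}$, double points of $H$ via the handleslide relations together with Axiom \ref{ax:endpoints}(1), super-handleslide points producing the homotopy operator, and the cusp/crossing/swallowtail strata for (3)) for parts (3) and (6). The only minor imprecision is that the homotopy operator produced at a super-handleslide point is not $E_{u,l}$ itself but $E_{u,l}$ pre- and post-composed with the continuation factors before and after the bifurcation (i.e.\ $gE_{u,l}h$), which is still strictly upper triangular since $g$ and $h$ are upper triangular with unit diagonal.
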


\begin{proof}
This is based on one standard approach to continuation maps in Morse Theory, as in \cite{Laud}. 

(1) follows from Axiom \ref{ax:iso} which shows that each individual factor $f_l:(V(R_{l-1}), d_{l-1}) \rightarrow (V(R_l), d_l)$ is a quasi-isomorphism where $R_{l-1}$ (resp. $R_{l}$) are the regions containing $\sigma(s)$ as $s \rightarrow s_{l}^-$ (resp. as $s \rightarrow s_{l}^+$).
 
(2) is obvious from the definition.  

(4) and (5) follow from the definition since $h_{i,j}^{-1} = h_{i,j}$, and the matrix of each $h_{i,j}$ 
is upper triangular with $1$'s on the diagonal.  

To prove (6), we consider a homotopy from $\sigma$ to $\tau$, $I:[0,1] \times[0,1] \rightarrow M$, $I(s,t) = \sigma_t(s)$, with $\sigma_t(i) = \sigma(i) = \tau(i)$, for $i=0,1$, such that the image of $I$ is disjoint from all crossing and cusp arcs.  By taking $I$ sufficiently generic, 
we can assume $I^{-1}(H)$ is an immersed $1$-manifold whose non-embedded points are as in the definition of MC2F, i.e. the interior of $I^{-1}(H)$ has at worst  transverse double points, and all endpoints of $I^{-1}(H)$ in the interior of $[0,1]\times[0,1]$ are as in Axiom \ref{ax:endpoints} (1) and (2).    
Moreover, we can assume the projection to the $t$ direction is a Morse function $\pi_t:I^{-1}(H)  \rightarrow \R$, and all critical points of $\pi_t,$  double points of $I^{-1}(H)$, and super-handleslide points occur at different values of $t$.   We subdivide $0 = t_0 < t_1 < \ldots < t_N=1$ so that each interval $[t_i,t_{i+1}]$ contains only one such $t$-value that is located in the interior of the interval.  See Figure \ref{fig:ContinuationMap}.  To complete the proof, we check that $f(\sigma_{t_i}) \sim f(\sigma_{t_{i+1}})$.  

\medskip

\noindent{\bf Case 1:} $(t_i,t_{i+1})$ contains a {\it critical point} of $\pi_t|_{I^{-1}(H)}$.  Then, the products that define $f(\sigma_{t_i})$ and $f(\sigma_{t_{i+1}})$ agree except for a consecutive pair of handleslides maps $h_{i,j} h_{i,j}$ that appears in only one of the two.  Since $h_{i,j}^{-1} = h_{i,j}$ we get $f(\sigma_{t_i}) = f(\sigma_{t_{i+1}})$.   

\medskip

\noindent{\bf Case 2:} $(t_i,t_{i+1})$ contains an interior {\it double point} of $I^{-1}(H)$.

For any $u_1>l_1$ and $u_2>l_2$, a straightforward computation gives the relations for handleslide maps,
\begin{equation}  \label{eq:Relations}
\begin{array}{cr} h_{u_1,l_1}  h_{u_2,l_2} = h_{u_2,l_2} h_{u_1,l_1},  & \mbox{when $l_1 \neq u_2 $ and $l_2 \neq u_1$,} \\
h_{u_1,l_1}h_{u_2,l_2} = h_{u_2,l_2} h_{u_1,l_1} h_{u_1,l_2}, & \mbox{when $l_1=u_2$.}
\end{array}
\end{equation}

Let $u_1>l_1$ and $u_2>l_2$ denote the indices of the upper and lower lifts of the two interior points of $I^{-1}(H)$ that intersect. 
If $l_1 \neq u_2 $ and $l_2 \neq u_1$, then $f(\sigma_{t_i})$ and $f(\sigma_{t_{i+1}})$ differ by the transposition of a pair of consecutive factors: that is  $h_{u_1,l_1} h_{u_2,l_2}$ is interchanged with $h_{u_2,l_2} h_{u_1,l_1}$.  The first formula from (\ref{eq:Relations}) shows that $f(\sigma_{t_i}) = f(\sigma_{t_{i+1}})$.

Supposing that $l_1=u_2$, Axiom \ref{ax:endpoints} (1) applies to show that the products defining $f(\sigma_{t_i})$ and $f(\sigma_{t_{i+1}})$ are related as in the second equation of (\ref{eq:Relations}) with the caveat that the $h_{u_1,l_2}$ may appear in some other location, including on the left hand side.  Since $h_{u_1,l_2}$ is self-inverse and commutes with $h_{u_2,l_2}$ and $h_{u_1,l_1}$, the equality $f(\sigma_{t_i})=f(\sigma_{t_{i+1}})$ follows.

\medskip

\noindent{\bf Case 3.}  $(t_i,t_{i+1})$ contains a $(u,l)$-{\it super handleslide point}, $p$.

We can factor 
\[
f(\sigma_{t_i})= g f_a h, \quad \mbox{and} \quad f(\sigma_{t_{i+1}}) = g f_b h,
\]
\[
h:(V(R_0), d_0) \rightarrow (V(R'), d'), \quad f_a, f_b: (V(R'),d') \rightarrow (V(R''),d''), \quad g:(V(R''),d'') \rightarrow (V(R_1),d_1),
\]
where $f_a$ and $f_b$ correspond to the segments of $\sigma_{t_i}$ and $\sigma_{t_{i+1}}$ that contain the intersections of these paths with the collection of handleslides with endpoints at $p$, as in Axiom \ref{ax:endpoints} (2).  See Figure \ref{fig:ContinuationMap}. Since 
 any two of the handleslides with endpoints at $p$ give handleslide maps $h_{i_1,j_1}$ and $h_{i_2,j_2}$  with $j_1 \neq i_2$, (because $i_2\leq u< l \leq j_1$),  the matrix of $f_a-f_b$ is
\[
\sum_{i<u } \langle d''S_{u}, S_i \rangle E_{i,l} + \sum_{l<j} \langle d' S_j, S_{l}\rangle E_{u,j}.
\] 
(As in Observation \ref{ob:CM2F}(1), the coefficients $\langle d S_{u}, S_i \rangle$ and $\langle d S_j, S_{l}\rangle$ are the same when $d$ is the differential from any of the regions that border $p$, including $d'$ and $d''.$) 
Taking $K$ to have matrix $E_{u,l}$ it follows that
\[
f_a-f_b = d'' K +K d',
\]
so that
\[
f(\sigma_{t_i}) - f(\sigma_{t_{i+1}})= g f_a h - g f_b h = d_1( gKh) + (gKh) d_0.
\]
Note that since $g$ and $h$ (resp. $K$) are upper triangular (resp. strictly upper triangular), it follows that the homotopy operator $gKh$ is strictly upper triangular.

\medskip

With Case 1-3 established, we note that a homotopy operator $\tilde{K}$ between $f(\sigma)$ and $f(\tau)$ is the sum of the homotopy operators $K_i$ between each $f(\sigma_{t_i})$ and $f(\sigma_{t_{i+1}})$.   Thus, it follows that $\tilde{K}$ is indeed upper triangular, and is $0$ if the image of $I$ is disjoint from super-handleslide points. 

\medskip

Finally, to establish (3), the previous argument is extended to allow the possibility that the image of the homotopy $I$ intersects crossings and cusps.  Assuming $I$ generic, this leads to several new codimension $2$ strata of $I^{-1}(\Sigma_{\mathcal{C}})$ to be considered in producing the chain homotopy $f(\sigma_{t_i}) \sim f(\sigma_{t_{i+1}})$.  The list includes:
\begin{itemize}
\item[(a)] Local max/min's of $\pi_t$ restricted to a crossing or cusp arc.  
\item[(b)] Transverse crossings of two crossing, cusp, or handleslide arcs.  In the case of the intersection of two crossing and/or cusp arcs, we may assume that two disjoint pairs of sheets are involved.
\item[(c)]  The generic codimension $2$ singularities of front projections as in Figure \ref{fig:Codim2Both}:  Triple Points, Cusp-Sheet Intersections, and Swallowtail Points.  
\end{itemize}
We leave this straightforward, but somewhat lengthy case-by-case check mostly to the reader, commenting here on a few interesting points.  

Note that in fact $f(\sigma_{t_i}) = f(\sigma_{t_{i+1}})$ in all cases except some local maxima/minima of cusp arcs.  In the case of a local minimum, an identity map factor in $f(\sigma_{t_i})$ is replaced with either $j\circ p$ or $p \circ j$ where 
\[
V(R') \stackrel{j}{\rightarrow} V(R'') = V(R') \oplus V_{\mathit{cusp}},  \quad \mbox{and} \quad V(R'') = V(R') \oplus V_{\mathit{cusp}} \stackrel{p}{\rightarrow} V(R')
\]
are the inclusion and projection.  One has 
\[
p \circ j = \mathit{id}, \quad \mbox{and} \quad j \circ p - \mathit{id} = d_{R''} K + K d_{R''}
\]
where $K(S_a) = S_b$ for the cusp sheets $S_a$ and $S_b$ (with $S_a$ above $S_b$) and $K(S_i) = 0$ for $i \neq a$.  

We examine also the case of an (upward) swallowtail point.  The tangency to the cusp edge at the swallowtail can be assumed to be non-vertical, and we consider the case where the swallowtail sheets exist above $\sigma(t_{i+1})$ but not $\sigma(t_i)$.   Assuming the swallowtail sheets are $S_{k}, S_{k+1}, S_{k+2}$, so that the sheets meeting at cusp edges are labeled $S_k$ and $S_{k+1}$,   the continuation map $f(\sigma_{t_{i+1}})$ is obtained from $f(\sigma_{t_i})$ via inserting the product
\[
p H_S Q H_T j,
\]
where $H_S$, $Q$, and $H_T$ have matrices  
\[
H_S = I+E_{k+1,k+2}+ \sum_{i<k} a_{i,k} E_{i,k},  \quad Q=Q_{k+1,k+2}, \quad  H_T = I+E_{k+1,k+2}
\]
with $Q_{k+1,k+2}$ the permutation matrix for $(k+1 \, k+2)$ and $a_{i,k}\in \Z/2$.  (All of the handleslides specified in Axiom \ref{ax:endpoints} (3) with lower lift on $S_k$ are collected into the $H_S$ matrix; this is possible since each $h_{i,k}$ commutes with $Q$.)
Thus, for $i\neq k$ we compute
\[
(p H_S Q H_T j)(S_i) = \left\{ \begin{array}{lr} (p H_S Q H_T)(S_i),  & i<k, \\ (p H_S Q H_T)(S_{i+2}), & i>k \end{array} \right. =  
\left\{ \begin{array}{lr} p(S_i),  & i<k, \\ p(S_{i+2}), & i>k \end{array} \right. = S_i;
\]
\[
(p H_SQH_T j)(S_{k}) = (pH_SQH_T)(S_{k+2})= pH_SQ(S_{k+1}+S_{k+2}) = p H_S(S_{k+2}+S_{k+1}) = p(S_{k+2})= S_k.
\]

\end{proof}

\begin{figure}
\labellist
\small
\pinlabel $h$ [r] at 318 32
\pinlabel $h$ [l] at 402 32
\pinlabel $g$ [l] at 402 192
\pinlabel $g$ [r] at 318 192
\pinlabel $f_a$ [r] at 318 110
\pinlabel $f_b$ [l] at 402 110
\endlabellist
\centerline{\includegraphics[scale=.6]{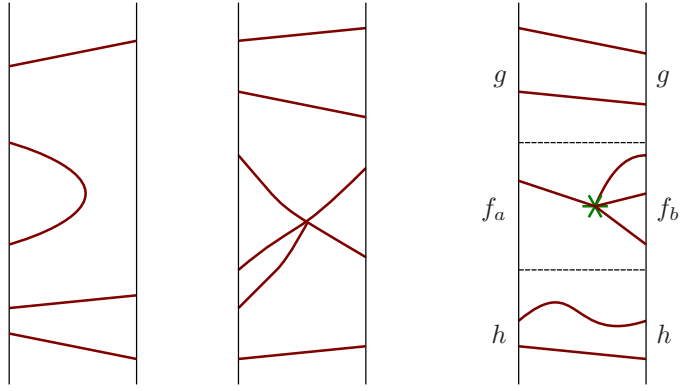}}

\caption{The handleslide set $I^{-1}(H)$ for $t \in [t_{i},t_{i+1}]$ as considered in Case 1, 2, and 3 from the proof of Proposition \ref{prop:ContinuationMap}.}
\label{fig:ContinuationMap}
\end{figure}

Let $x_0$ be a basepoint, belonging to a region $R_0\subset M \setminus \Sigma_\mathcal{C}$.  

\begin{corollary} \label{cor:Cx0}

\begin{enumerate}
\item The homology $H(\mathcal{C}_{x_0}) := H(V(R_0),d_0)$ is independent of the choice of $x_0$ and $R_0$.

\item The continuation maps induce a well defined anti-homomorphism
\[
\Phi_{\mathcal{C},x_0}: \pi_1(S, x_0) \rightarrow \mathit{GL}(H(\mathcal{C}_{x_0})),     \quad [\sigma] \mapsto H(f(\sigma)).
\]
\end{enumerate}
\end{corollary}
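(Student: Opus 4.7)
The plan is to derive both statements directly from Proposition \ref{prop:ContinuationMap}; the substantive analytic work has already been done there, and what remains is essentially bookkeeping plus a standard transversality argument to pass from smooth paths transverse to $\Sigma_{\mathcal{C}}$ (for which continuation maps are defined) to arbitrary points in $M$ and arbitrary classes in $\pi_1$.

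For (1), given any two regions $R_0, R_1 \subset M\setminus\Sigma_{\mathcal{C}}$ and points $x_0\in R_0$, $x_1\in R_1$, I will choose a smooth path $\sigma:[0,1]\to M$ from $x_0$ to $x_1$ that is transverse to every stratum of $\Sigma_{\mathcal{C}}$. Such a path exists by a general position argument, since $\Sigma_{\mathcal{C}}$ is a finite union of smooth submanifolds of positive codimension. Proposition \ref{prop:ContinuationMap}(1) then immediately yields that $f(\sigma):(V(R_0),d_0)\to(V(R_1),d_1)$ is a quasi-isomorphism, and hence $H(V(R_0),d_0)\cong H(V(R_1),d_1)$. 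This is precisely the required independence, up to isomorphism, of the choice of basepoint and containing region.

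For (2), I will set $\Phi_{\mathcal{C},x_0}([\sigma]) := H(f(\sigma))$ for any smooth representative $\sigma$ of $[\sigma]\in\pi_1(M,x_0)$ that is transverse to $\Sigma_{\mathcal{C}}$. To check that this is independent of the chosen representative, given two smooth transverse loops $\sigma,\tau$ at $x_0$ that are path homotopic, standard smoothing and transversality arguments produce a smooth path homotopy between them satisfying the genericity hypothesis of Proposition \ref{prop:ContinuationMap}(3). That proposition then supplies a chain homotopy $f(\sigma)\sim f(\tau)$, so $H(f(\sigma))=H(f(\tau))$, and $\Phi_{\mathcal{C},x_0}$ is well defined on homotopy classes. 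The anti-homomorphism property is immediate from Proposition \ref{prop:ContinuationMap}(2): for loops $\sigma,\tau$ based at $x_0$,
\[
\Phi_{\mathcal{C},x_0}([\sigma][\tau])=H(f(\sigma*\tau))=H(f(\tau)\circ f(\sigma))=\Phi_{\mathcal{C},x_0}([\tau])\circ\Phi_{\mathcal{C},x_0}([\sigma]).
\]
Finally, that $\Phi_{\mathcal{C},x_0}([\sigma])$ lands in $\mathit{GL}(H(\mathcal{C}_{x_0}))$ is immediate from (1) of the proposition, since each $f(\sigma)$ is a quasi-isomorphism.

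There is no real obstacle in this argument; the only slightly delicate point is to ensure that every class in $\pi_1(M,x_0)$ admits a smooth representative transverse to $\Sigma_{\mathcal{C}}$ and that any path homotopy between two such representatives can be made generic enough for Proposition \ref{prop:ContinuationMap}(3) to apply. Both facts are routine applications of smooth approximation and jet transversality, using only that $\Sigma_{\mathcal{C}}$ is a finite union of smooth submanifolds of positive codimension.
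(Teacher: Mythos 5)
Your proposal is correct and follows exactly the paper's route: the paper's proof is simply ``Follows from Proposition \ref{prop:ContinuationMap} (1)--(3),'' and you have filled in the same deductions, using (1) for the quasi-isomorphism statement, (3) for well-definedness on homotopy classes, and (2) for the anti-homomorphism identity. The transversality and smooth-approximation points you flag are indeed the only implicit details, and they are routine as you say.
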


\begin{proof}
Follows from Proposition \ref{prop:ContinuationMap} (1)-(3).
\end{proof}

We refer to $H(\mathcal{C}_{x_0})$ as the {\bf fiber homology} of $\mathcal{C}$ at $x_0$, and $\Phi_{\mathcal{C},x_0}$ as the {\bf monodromy representation}.

\begin{remark}  \label{rem:localsystem}
Although we have only defined $H(\mathcal{C}_{x_0}, \Phi_\mathcal{C})$ for $x_0 \in M \setminus \Sigma_\mathcal{C}$, it is standard that a representation of the fundamental group at any point $x_0 \in M$ of a connected space extends to a local system of vector spaces, well-defined up to isomorphism. In this way, the representation $\Phi_{\mathcal{C},x_0}$ is defined up to isomorphism for arbitrary $x_0 \in M$.
\end{remark}

\subsection{Generating families and MC2Fs}

\begin{prop}
\label{prop:GFMorse2}
If the Legendrian $L$ has a tame at infinity 
  generating family $F$, then it has a $0$-graded Morse complex 2-family, $\mathcal{C}$. Moreover:
\begin{enumerate}
\item if $F$ is linear at infinity, then we can take $\mathcal{C}$ to have vanishing fiber homology, $H(\mathcal{C}_{x_0}) = \{0\};$
\item if the domain of $F$ is a trivial bundle over $M$, then we can take $\mathcal{C}$ to have trivial monodromy representation.  
\end{enumerate}
\end{prop}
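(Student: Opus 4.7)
The strategy is to promote the $2$-parameter family $\{f_x = F|_{\pi^{-1}(x)}\}_{x\in M}$ of fiber restrictions of $F$ into an MC2F, using the Cerf/Hatcher--Wagoner theory of generic $2$-parameter families of functions. First I would choose a generic smooth family $V=\{V_x\}_{x\in M}$ of fiberwise gradient-like vector fields for $F$. For such a generic $V$, the locus in $M$ where $(f_x, V_x)$ fails to be Morse--Smale stratifies as follows. Codimension $1$ consists of the cusp arcs of $\Pi_B(L)$ (where $f_x$ has a birth--death) together with a handleslide $1$-manifold $H$ along which $V_x$ admits a single broken gradient trajectory between two Morse critical points of equal index. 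Codimension $2$ consists of the generic singularities of $\Pi_B(L)$ (swallowtails, triple points, cusp--sheet intersections), transverse self-intersections of $H$, transverse crossings of $H$ with the singular set of $L$, and super-handleslide points $H_{-1}$ at which $V_x$ admits a broken trajectory between critical points whose indices differ by exactly one. Tameness at infinity guarantees that all critical points of $f_x$ remain in a compact region of the fiber, so the analysis is finite.

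With these data fixed, I would define the MC2F as follows: for each region $R_\nu$ of $M\setminus \Sigma_\mathcal{C}$, set $d_\nu$ to be the Morse coboundary of $(f_x, V_x)$ for any $x\in R_\nu$. Via $i_F$, critical points of $f_x$ correspond bijectively to sheets of $L$ above $x$, with Morse coindex matching the Maslov potential $\mu$, so $(V(R_\nu), L(R_\nu), d_\nu)$ is a $0$-graded ordered complex. The main step is verification of Axioms~\ref{ax:endpoints} and~\ref{ax:iso}. Axiom~\ref{ax:iso} is a standard continuation analysis: crossing an $(i,j)$-handleslide changes $d_\nu$ by conjugation by $h_{i,j}$; crossing a crossing arc of $L$ relabels sheets by a transposition; crossing a cusp arc of $L$ corresponds to the birth of a cancelling pair $(S_k, S_{k+1})$ with $d_{\mathit{cusp}} S_{k+1} = S_k$. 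Axiom~\ref{ax:endpoints}(1) follows from the observation that the transverse intersection of $(i,m)$- and $(m,j)$-handleslide arcs is precisely the boundary of a $1$-parameter family of broken $(i,j)$-trajectories. Axiom~\ref{ax:endpoints}(2) follows from gluing the super-handleslide trajectory to rigid flow-lines counted by the relevant coefficients of $d_\nu$.

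The main obstacle will be Axiom~\ref{ax:endpoints}(3): using the Hatcher--Wagoner local model for a generic $2$-parameter family near a swallowtail, I must show that the bifurcation produces precisely the prescribed handleslide configuration (the handleslides with lower lift on $S_k$ arriving from outside the swallowtail region, together with the two $(k{+}1,k{+}2)$-handleslide arcs flanking the crossing locus), and that the induced differentials in the swallowtail region match~\eqref{eq:dTdef}--\eqref{eq:HXdef}. As foreshadowed in the remark after~\eqref{eq:HXdef}, this is where the Morse-theoretic origin of $H_S$ and $H_T$ is established.

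For the supplementary claims, suppose first that $F$ is linear at infinity, so $E=E'\times \R^k$ and, outside a compact subset of $E$, $F$ agrees with a fixed nonzero linear form $\ell$ on $\R^k$. Choose regular values $c_- < c_+$ outside all critical values of $f_x$. Since $f_x = \ell$ at infinity, each sublevel set $\{f_x \leq c_\pm\}$ is homotopy equivalent to $E'_x \times \{\ell \leq c_\pm\}$; the inclusion $\{f_x \leq c_-\}\hookrightarrow \{f_x \leq c_+\}$ is then a homotopy equivalence, being the identity on the $E'_x$ factor and an inclusion of contractible half-spaces on the $\R^k$ factor. Hence the relative cohomology vanishes, the Morse cochain complex of $f_x$ is acyclic, and $H(\mathcal{C}_{x_0})=0$. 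If instead $E = M\times N$ is trivial, then the product structure provides canonical identifications of the Morse cohomologies across fibers, making the local system of Morse cohomologies on $M$ trivial, and so $\Phi_{\mathcal{C},x_0}$ is trivial.
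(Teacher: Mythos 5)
Your construction of the MC2F itself follows essentially the same route as the paper: generic fiberwise gradient-like vector fields, the Hatcher--Wagoner stratification, $H$ and $H_{-1}$ from relative-index $0$ and $-1$ intersections of stable/unstable manifolds, Morse codifferentials on regions, and the exchange/dovetail relations for Axioms \ref{ax:endpoints}(2)--(3) (the paper also stabilizes $F$ so all fiber indices lie in $[1,n-1]$, and handles a mod-$2$ bookkeeping issue by joining surplus handleslide endpoints in pairs near swallowtails; these are details you should add but not conceptual departures). For part (1) your argument has a soft spot: in the non-compact, non-proper setting it is not automatic that the Morse cochain complex of $f_{x_0}$ computes $H^*(\{f_{x_0}\le c_+\},\{f_{x_0}\le c_-\})$, nor that the sublevel sets are products up to homotopy just because $f_{x_0}$ agrees with $\ell$ outside a compact set; the paper fixes this by choosing the gradient-like field to equal the Euclidean gradient of $\ell$ at infinity, splitting $\R^k\cong\ker\ell\oplus\R$ and compactifying the $\ker\ell$ factor so that the extended function is proper, after which the standard Morse-theory statement applies. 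This is repairable and stays within your strategy.

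Part (2), however, has a genuine gap. The monodromy $\Phi_{\mathcal{C},x_0}$ is by definition the map on homology induced by the chain-level continuation maps of the MC2F you just built, i.e.\ by the composite of handleslide maps $h_{i,j}$, crossing maps $Q$, and cusp inclusions/projections encountered along a loop. Triviality of the bundle $E=M\times N$ does \emph{not} give ``canonical identifications of the Morse cohomologies across fibers'': the functions $f_x$ vary with $x$, so any identification of their Morse cohomologies is itself a continuation map, and the question of whether its monodromy around a loop is trivial is exactly what must be proved; moreover, even if one produced some topological local system from the sublevel-set pairs, you would still have to show it agrees with the combinatorial continuation maps of $\mathcal{C}$, which your one-sentence argument does not address. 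The paper's actual argument is different and uses the trivialization in an essential way: given a loop $\sigma$, the induced generating family on $S^1\times N$ extends to a tame-at-infinity generating family on $D^2\times N$ (because the space of functions on $N$ with the fixed behavior at infinity is contractible); the resulting Legendrian in $J^1D^2$ carries an MC2F whose boundary continuation map agrees with $f(\sigma)$, and Proposition \ref{prop:ContinuationMap}(3), applied inside the disk, shows this map is chain homotopic to the continuation map of a constant loop, hence induces the identity on homology. Without an argument of this kind (or a proof that your proposed identification coincides with the MC2F continuation maps on homology), the claim of trivial monodromy is unestablished.
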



\begin{proof}
Let $F:E \rightarrow \R$ be a generating family for $L \subset J^1M$ with fiber $N.$
In an open set $U \subset M$ above which $E$ is trivialized, we can consider $F$ as a 2-parameter family of smooth functions,  
$\{f_m:N \rightarrow \R\}_{m \in U}$.  
As discussed in \cite[p.22-23]{HatcherWagoner}, after generic small perturbation there is a stratification
$M = \mathcal{F}_0 \cup \mathcal{F}_1 \cup \mathcal{F}_2$ given by the critical points and values of the $f_m$.
 In the codimension-0 $\mathcal{F}_0$ stratum, all critical points are non-degenerate and critical values are distinct.
 The codimension-1 $\mathcal{F}_1$ stratum is the union of parameter values with a single birth-death or two non-degenerate points with a common critical value.
 The codimension-2 $\mathcal{F}_2$ stratum has six types of singularities: a unique swallowtail point and five various configurations of  transverse intersections of the codimension-1 strata.
The set $\mathcal{F}_1 \cup \mathcal{F}_2$ is the base projection of the singular set of $L$, $\Sigma = \Pi_B(\Sigma_{cusp} \cup \Sigma_{cr} \cup \Sigma_{st}),$ made of the cusp loci, crossing loci, their various intersections and the swallowtail points. 

A sheet of $\Pi_F(L)$ that lies above $U \subset M$ corresponds to a family of non-degenerate critical points $q_m$ of $f_m$ for $m \in U$ whose Morse indices $i_{mo}(q_m)$ are locally-constant. 
Seen this way, the Morse index of critical points provides a  $\Z$-valued Maslov potential on $L$.   This implies $m(L) = 0$ and gives the grading on vector spaces for which the $0$-graded requirements in Definition \ref{def:CM2F} are satisfied.
Similarly, the locally well-defined relative Morse index of two such families of critical points equals the difference in Maslov potentials of the two corresponding sheets.

We review several properties of the stable and unstable manifolds of critical points that can be arranged following \cite{HatcherWagoner}.  
  In order to produce the simplest behavior near cusps and swallowtail points, it is useful to have the property that all non-degenerate critical points have $1 \leq i_{mo}(p) \leq n-1$ where $n = \dim N$.  This condition holds after stabilizing $F$ via the quadratic form  $Q(\mu_1,\mu_2) = \mu_1^2-\mu_2^2$. 
When forming ascending and descending manifolds in the non-compact, but tame at infinity setting we use gradient-like vector fields that agree outside of a compact set  with the Euclidean gradient of the linear or quadratic function that $F$ is equal to at infinity.  

Following \cite{HatcherWagoner}, 
there exists a 2-family, $\{g_m,V_m\}_{m\in M}$, of metrics $g_m$ and gradient-like vector fields  $V_m$ (on the fibers of $E$) for the functions $f_m,$ 
such that the following hold:
\begin{enumerate}
\item For all $m \in \mathcal{F}_0$ and $p_m \in \mbox{Crit}(f_m),$  the stable and unstable manifolds $W^s(p_m)$ and $W^u(p_m)$ vary smoothly with $(m, p_m),$ i.e. the fiber-wise stable and unstable manifolds of sheets of $L$ are smooth manifolds.     
\item For all $m$ near the points in $\mathcal{F}_1$ with a pair of ``near birth-death" points  $p_m^+$  and $p_m^-$ with $i_{mo}(p_m^+) - i_{mo}(p_m^-)=1,$  $W^u(p_m^+)$ and $W^s(p_m^-)$ intersect transversely at an intermediary level-set in one point.  
\item For all $m \in M$ and $p_m, q_m  \in \mbox{Crit}(f_m)$ with locally well-defined relative Morse index, $i_{mo}(p_m) - i_{mo}(q_m),$ equal to $1,0,-1$ the unions (over $m$) of $W^s(p_m)$ and  $W^u(q_m)$ are in general position.
\item
Outside of arbitrary small disk neighborhoods, $N(e^0_{\mathit{st}})$, of the swallowtail points, all the birth/death points are  {\it independent}.
An independent birth/death is one in which the stable (resp. unstable) manifolds of the newly-born pair of points do not intersect the unstable (resp. stable) manifolds of the other critical points.
\end{enumerate}
These items follow from Theorem 3.1 on p. 42, p.52-53, p.62-63, and Chapter IV, Section 2, Part (C) of \cite{HatcherWagoner}.

We now translate these items into the language of Definition \ref{def:CM2F} to construct a Morse complex 2-family.
Consider a pair of families of non-degenerate critical points $p_m, q_m$.
If $i_{mo}(p_m) - i_{mo}(q_m) = -1,$ then the set of $m \in M$ such that $W^u(p_m) \cap W^s(q_m)  \ne \emptyset$ is a set of points which we use to define $H_{-1}.$ 
If $i_{mo}(p_m) - i_{mo}(q_m) = 0,$ then the set of $m \in M$ such that $W^u(p_m) \cap W^s(q_m) \ne \emptyset$ is a collection of curves in general position which we use to define $H$ outside of $\cup N(e^0_{\mathit{st}})$.  Both $H_{-1}$ and $H$ have natural upper and lower lifts to $L$ specified by the image of the critical points $p_m,q_m \in \Sigma_F$ under the diffeomorphism $i_F:\Sigma_F \rightarrow L$.  (Notation as in Section \ref{sec:GeneratingFamilies}.)  As in  Chapter IV, Section 2, Part (C), page 147 \cite{HatcherWagoner}, the intersection with $\partial N(e^0_{\mathit{st}})$ of handleslide arcs with lifts on the swallowtail sheets is as specified by Axiom \ref{ax:endpoints} (3) where the differential $d_0$ is the differential from the Morse complex of the $f_m$ outside the swallowtail region.    We complete the definition of $H$ by connecting these handeslide endpoints to the swallowtail point.  As a technical point,  the number of $(i,k)$-handleslide arcs only agrees with $\langle d_0S_k, S_i\rangle$ mod $2$;  if necessary, we can connect any extra endpoints in pairs.

We now assign differentials $d_\nu$ to components $R_\nu$ of $\mathcal{F}_0 \setminus (H_{-1} \cup H)= M \setminus \Sigma_\mathcal{C}.$ 
First, consider regions outside of $\cup N(e^0_{\mathit{st}})$.  We can assume that for generic $m \in R_\nu,$ the gradient-like vector field $V_m$ of $f_m$ is Morse-Smale. 
We can then define $d_\nu$ as the Morse co-differential, which counts positive flows of $V_m$ between critical points of relative Morse index $1$. See Remark \ref{rmk:GromovCoMorse}. 
 This differential is independent of the choice of $m \in R_\nu$, since any other such $m' \in R_\nu$ can be connected to $m$ by a path in $R_\nu$ along which the Morse-Smale condition holds except at finitely many points where two flowlines between the same pair of critical points of the $f_m$ appear or disappear.
 This does not change $d_\nu$.  Finally, note that there is a unique way to assign differentials in $\cup N(e^0_{\mathit{st}})$ so that Axiom \ref{ax:iso} holds.  [If necessary, see Propositions \ref{prop:Extend1} or \ref{prop:Extend2} below.]
 
We now verify that Axioms \ref{ax:endpoints} and \ref{ax:iso} follow from known Cerf theory, subject to the convention-reversing modification in Remark \ref{rmk:GromovCoMorse}.   That all endpoints for handleslide arcs are as in Axiom \ref{ax:endpoints} is established over the course of Chapter IV of \cite{HatcherWagoner} which needs a complete
treatment of 2-parameter families of functions and gradient-like vector fields for its invariance
proof of the (Morse) $K$-theoretic $Wh_2$ pseudo-isotopy invariant.
Endpoints as in Axiom  \ref{ax:endpoints}(1) are discussed in Chapter II Section 1, page 89 \cite{HatcherWagoner}.
Endpoints as in Axiom \ref{ax:endpoints}(2) appear in the ``Exchange Relation," see Chapter IV, Section 2, Part (A), page 131 \cite{HatcherWagoner}.
Near swallowtail points,  Axiom \ref{ax:endpoints}(3) follows from the ``Dovetail Relation," see Chapter IV, Section 2, Part (C), page 147 \cite{HatcherWagoner}.

 Axiom \ref{ax:iso}(1) is immediate, since when passing the crossing locus thru a point $m$ that is disjoint from handleslides, swallowtail, or cusp points, the Morse complex remains unchanged, except for the ordering of generators by critical value.  
 Axiom \ref{ax:iso}(2) is a well-known result \cite[Section 7]{Milnor}.
 Axiom \ref{ax:iso}(3) follows from items (2) and (4) of the list of properties for the stable and unstable manifolds of the critical points (see earlier in this proof).
 
Thus, we have produced an MC2F, $\mathcal{C}$, from a tame at infinity generating family.  It remains to establish (1) and (2) from the statement of the proposition. 
 
For (1), observe that the fiber homology $H(\mathcal{C}_{x_0})$ is the cohomology of the Morse complex of $f_{x_0}$ (the restriction of $F$ to the fiber above $x_0$).  Assuming $F$ linear at infinity, $f_{x_0}$ has the form 
\[
f_{x_0}: E'_{x_0} \times \R^k \rightarrow \R
\]
 where $E'_{x_0}$ is the (compact) fiber of $E'$ above $x_0$, and agrees with a non-zero linear function $l: \R^{k} \rightarrow \R$ outside of a compact set.  We can split $\R^k \cong \ker l \oplus \R$, and by 
 compactifying the $\ker l$ factor, we can extend $f_{x_0}$ to a smooth function 
 \[
 f_{x_0}: E'_{x_0} \times S^{k-1} \times \R \rightarrow \R
 \] that (i) is proper and (ii) agrees with the projection to the $\R$ factor outside of a compact set.  This extension does not change the Morse complex of $f_{x_0}$, and in this setting the Morse complex computes the relative cohomology of $(f \leq T, f \leq -T)$ where $T >>0$; see for instance \cite{Milnor}.  Since 
\[
(f\leq T, f\leq -T) =  \left(E'_{x_0} \times S^{k-1} \times (-\infty, T], E'_{x_0} \times S^{k-1} \times (-\infty, -T]\right),
\]
 it follows that $H(\mathcal{C}_{x_0}) = \{0\}$.


To prove (2), assume $E \rightarrow M$ is the trivial bundle $ M \times N.$   (By the tame at infinity assumption, $N = N' \times \R^k$ with $N'$ compact.) 
Let $\sigma$ be a loop in $M$, generic with respect to the base projection of the singular set.  The induced generating family on $S^1$ (with trivial bundle domain  $S^1 \times N$), call it $F_{S^1}$, extends to a tame at infinity generating family on $D^2$ (with domain $D^2 \times N$).  [This is because the subset of $C^\infty(N ,\R)$ consisting of those functions  agreeing with a fixed linear or quadratic function on $\R^k$ at infinity is contractible.]  
Taking the extension of $F_{S^1}$ to $D^2 \times N$ to be sufficiently generic, the transversality condition in the definition of generating families will hold and the front projection of the resulting Legendrian on $J^1D^2$ will be generic.  This Legendrian is equipped with an MC2F, $\mathcal{C}',$ such that the continuation map for $\mathcal{C}' $ associated to the $S^1$ boundary loop of $D^2$ agrees with the continuation map for $\sigma.$  By Proposition \ref{prop:ContinuationMap} (3), this continuation map induces the identity map on homology (since it is chain homotopic to the continuation map for a constant loop).

 \end{proof}

\section{From MC2F to CHD}
\label{sec:Proof}

In this section, we show how to construct a CHD, and hence an augmentation, from an MC2F.   
A key technical point in associating a CHD to an MC2F is to allow continuation maps to be associated to the edges of a compatible polygonal decomposition for $L$.  This is not immediate from Section \ref{sec:Comb} since edges may be contained in the singular set of $L$, but is accomplished by shifting $0$-cells and $1$-cells off of the singular set.  See Figure \ref{fig:GFtoCHD1} for a summary.

\subsection{Continuation maps associated to edges of a compatible cell decomposition}  \label{sec:Shifts}

\begin{figure}
\labellist
\small
\endlabellist
\centerline{\includegraphics[scale=.6]{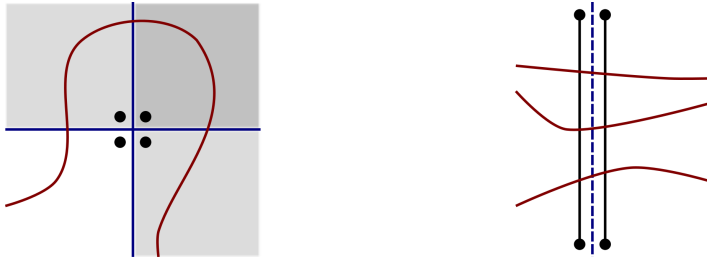}}

\caption{Given an MC2F for $L$, differentials $d(e^0_\alpha \rightarrow e^0_\alpha):V(e^0_\alpha) \rightarrow V(e^0_\alpha)$ are defined by shifting vertices $e^0_\alpha$ into bordering $2$-cells (left). Continuation maps $f(e^1_\beta\rightarrow e^1_\beta)$ are assigned to $1$-cells by a similar shift (right).    The choice of shift is non-unique and well-definedness is verified in Propositions \ref{prop:dmaps} and \ref{prop:fmaps}.}
\label{fig:GFtoCHD1}
\end{figure}

Let $\mathcal{E}$ be a compatible polygonal decomposition for $L$ satisfying Convention \ref{conv:1}.

\begin{definition}  \label{def:nice}

A MC2F is {\bf nice} with respect to $\mathcal{E}$ if
\begin{enumerate}
\item  The handleslide sets are transverse to the $1$-skeleton of $\mathcal{E}$ except at swallowtail points which may be endpoints of handleslide arcs (as in Axiom \ref{ax:endpoints} (3)).
\item  In a neighborhood of each upward swallowtail point, the $(i,k)$-handle slide arcs 
(as in Axiom \ref{ax:endpoints} (3)) are contained in the corner labeled $S$, while both the $S$ and $T$ corners contain a $(k+1,k+2)$-handleslide arc.  A similar condition is imposed at downward swallowtail points.
\end{enumerate}

\end{definition}

Let $\mathcal{C} = (\{d_\nu\},H,H_{-1})$ be an MC2F  that is nice with respect to $\mathcal{E}$.  Recall that the $d_\nu$ are differentials on $V(R_\nu)$ where $\{R_\nu\}$ is the set of connected components  of $M \setminus \Sigma_\mathcal{C}$ (with $\Sigma_\mathcal{C}$ the union of the 
handleslide sets of $\mathcal{C}$ and the singular set of $L$).

Using $\mathcal{C}$, we now associate to each appearance of a vertex $e^0_\beta$ in the closure of another cell, $e^0_\beta \stackrel{j}{\rightarrow} e^d_\alpha$, (notation as in Section \ref{sec:BoundaryDiff})  
 a differential
\[
d(e^0_\beta \rightarrow e^d_\alpha): V(e^d_\alpha) \rightarrow V(e^d_\alpha).   
\]

\begin{itemize}

\item  Assuming $e^0_\beta$ is {\it not a swallowtail point}:  Choose a component $R_\nu$ 
 whose closure contains a neighborhood of $e^0_\beta$ in  $\overline{e^d_\alpha}$.  The sheets $L(e^d_\alpha)$ are identified with a subset of $L(R_\nu)$ in the usual way, 
 so that
 \begin{equation} \label{eq:LRnu}
 L(R_\nu) = L(e^d_\alpha) \sqcup L_{\mathit{cusp}}
 \end{equation}
 with the sheets in $L_{\mathit{cusp}}$ meeting in pairs at cusp edges above $e^d_\alpha$.
Axiom \ref{ax:iso} (2) and (3) imply that in the resulting direct sum $V(R_\nu) = V(e^d_\alpha) \oplus V_{\mathit{cusp}}$ the $V(e^d_\alpha)$ component is a sub-complex of $(V(R_\nu), d_\nu)$.  Thus, we can define
\[
d(e^0_\beta \rightarrow e^d_\alpha) = d_\nu|_{V(e^d_\alpha)}.
\]


\item Assuming $e^0_\beta$ is {\it a swallowtail point}:   When $e^d_\alpha$ is one of $e^2_S$, $e^2_T$ or $e^1_{cr}$ we identify $L(e^d_\alpha)$ with $L(R_s)$, $L(R_t)$, or $L(R_t)$ respectively where $R_t$ (resp. $R_s$) is the region that borders the crossing locus on the side labeled $T$ (resp. $S$).  Take the corresponding $d_t$ or $d_s$ for $d(e^0_\beta \rightarrow e^d_\alpha)$.  For any other $e^d_\alpha$, the sheets $L(e^d_\alpha)$ are identified bijectively with $L(R_0)$ where $R_0$ is the region that borders $e^0_\beta$ from outside the swallowtail region; the resulting isomorphism $V(e^d_\alpha) \cong V(R_0)$ allows us to put $d(e^0_\beta \rightarrow e^d_\alpha) = d_0$. See Figure \ref{fig:RsRt}.
\end{itemize}

\begin{figure}

\quad

\labellist
\small
\pinlabel $R_s$ [b] at 68 90
\pinlabel $R_t$ [b] at 92 90
\pinlabel $R_0$ [t] at 44 8
\pinlabel $\sigma_S$ [b] at 240 122
\pinlabel $\sigma_T$ [b] at 400 122
\endlabellist
\centerline{\includegraphics[scale=.6]{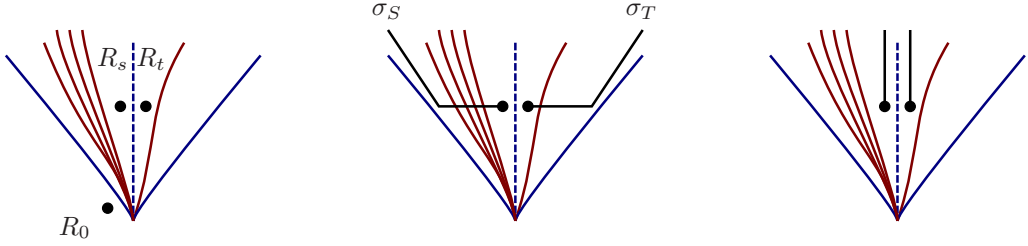}}

\caption{(left) The regions $R_s$, $R_t$, and $R_0$ at a swallowtail point used to define the $d(e^0_{st}\rightarrow e^d_\alpha)$.  The paths that define  $f(e^1_S \rightarrow e^2_S)$ and $f(e^1_T \rightarrow e^2_T)$ (center) and $f(e^1_{cr}\rightarrow e^d_\alpha)$ (right).}
\label{fig:RsRt}
\end{figure}

\begin{proposition} \label{prop:dmaps}
\begin{enumerate}
\item The differentials $d(e^0_\beta \rightarrow e^d_\alpha)$ are well defined.

\item For any $e^0_\beta \rightarrow e^d_\alpha$, $d(e^0_\beta \rightarrow e^d_\alpha)$ is the boundary differential associated to $d_\beta:= d(e^0_\beta \rightarrow e^0_\beta)$  (as in Section \ref{sec:BoundaryDiff}).
\end{enumerate}
\end{proposition}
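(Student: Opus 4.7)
The plan is to split into cases based on whether the vertex $e^0_\beta$ is a swallowtail point, and within each case verify well-definedness (part 1) and then match the result with $\widehat{d}_\beta$ (part 2).

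For a non-swallowtail $e^0_\beta$, the niceness hypothesis (Definition \ref{def:nice}(1)) forbids handleslide arcs from meeting $e^0_\beta$. Consequently, for $d=2$ the corner of $e^2_\alpha$ at $e^0_\beta$ lies in a unique component $R_\nu$ of $M\setminus\Sigma_\mathcal{C}$, so the choice is forced. For $d=1$ the only possible ambiguity is between the two sides of $e^1_\alpha$, which is absent when $e^1_\alpha \notin \Sigma$; in the crossing (resp. cusp) case the two candidate restrictions agree by Axiom \ref{ax:iso}(2) (resp. Axiom \ref{ax:iso}(3)), since the natural identifications $V(e^1_\alpha) \cong V(R_i)$ by sheet closures in $L$ intertwine the required isomorphism. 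To verify part (2) in the non-swallowtail case, I would fix a single $R_\nu$ and use it simultaneously to define both $d(e^0_\beta \rightarrow e^d_\alpha)$ and $d_\beta = d(e^0_\beta\rightarrow e^0_\beta)$. Under the decomposition $V(e^d_\alpha) = i(V(e^0_\beta)) \oplus V_\mathit{cusp}$, iterated application of Axiom \ref{ax:iso}(3) to each cusp arc at $e^0_\beta$ in the 1-skeleton of $\mathcal{E}$ shows that $d_\nu$ acts as $d_\mathit{cusp}$ on $V_\mathit{cusp}$ (sending each lower cusp sheet to its partner) and as $d_\beta$ on $i(V(e^0_\beta))$. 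This is exactly $\widehat{d}_\beta$.

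For a swallowtail vertex $e^0_\beta = e^0_{st}$, well-definedness is immediate because the definition specifies the regions $R_0$, $R_s$, $R_t$ uniquely. The main content is matching the result with the boundary differentials $\widehat{d}_T = h_{k+1,k+2} d_{k,k+1} h_{k+1,k+2}$ and $\widehat{d}_S = h_{k+2,k+1} d_{k,k+2} h_{k+2,k+1}$ from (\ref{eq:dTdef})--(\ref{eq:dSdef}). I would invoke Definition \ref{def:nice}(2) to place $(k+1,k+2)$-handleslide arcs in both the $S$ and $T$ corners flanking the crossing, and to confine the $(i,k)$-handleslide arcs from Axiom \ref{ax:endpoints}(3) to the $S$ corner. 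Repeated application of Axiom \ref{ax:iso}(1) then translates $d_0$ into $d_t$ (resp. $d_s$) via conjugation by the resulting handleslide maps; the main obstacle I anticipate is verifying that these conjugations reproduce (\ref{eq:dTdef}) and (\ref{eq:dSdef}) exactly. However, the matrix identities (\ref{eq:Akk1}) established in Lemma \ref{lem:XST} were derived precisely for this compatibility, so they should close the argument. The $e^1_{cr}$ case reduces to the $T$-corner computation via $V(e^1_{cr}) \cong V(R_t)$, and the remaining cells adjacent to $e^0_{st}$ are handled by the non-swallowtail analysis carried out inside the outside region $R_0$.
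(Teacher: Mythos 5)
Your non-swallowtail argument omits the case $d=0$, i.e.\ the well-definedness of $d_\beta = d(e^0_\beta \rightarrow e^0_\beta)$ itself. You handle $d=2$ (unique corner region) and $d=1$ (the two sides of a $1$-cell), but when $e^d_\alpha = e^0_\beta$ every region of $M\setminus \Sigma_{\mathcal{C}}$ bordering the vertex is an admissible choice, and there may be many of them, separated by the crossing and cusp arcs of $\Sigma$ emanating from $e^0_\beta$. This omission is not cosmetic: your proof of part (2) fixes one region $R_\nu$ and ``uses it simultaneously to define both $d(e^0_\beta \rightarrow e^d_\alpha)$ and $d_\beta$,'' which only identifies $d(e^0_\beta\rightarrow e^d_\alpha)$ with the boundary differential of \emph{the} map $d_\beta$ once you know that this particular region computes the same $d_\beta$ as every other admissible one. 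The repair is exactly the mechanism you already invoke for $d=1$, made explicit: writing $D_\nu = p_\nu \circ d_\nu \circ i_\nu$ for the candidate restriction, any two admissible regions at the vertex are joined by a chain of regions consecutively sharing a crossing or cusp $1$-cell with endpoint at $e^0_\beta$, and for each adjacent pair the map $h$ supplied by Axiom \ref{ax:iso} (2) or (3) satisfies $h\circ i_\nu = i_\mu$ and $p_\nu = p_\mu \circ h$, whence $D_\nu = D_\mu$. This chaining step is the actual content of part (1) in the paper's proof and needs to be stated.

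At the swallowtail your route is sound but differs from the paper's in one place. For the $T$ corner no identity is needed: crossing $e^1_T$ turns $d_0$ into $d_{k,k+1}$ by Axiom \ref{ax:iso} (3) (note this step is (3), not (1), a small citation slip in your sketch), and conjugating by the single $(k+1,k+2)$-handleslide guaranteed by Definition \ref{def:nice} (2) reproduces (\ref{eq:dTdef}) verbatim. For the $S$ corner your plan---conjugate $d_{k,k+1}$ by the $(i,k)$- and $(k+1,k+2)$-handleslides and invoke (\ref{eq:Akk1})---does close the argument: since $S^2=I$ over $\Z/2$, the identity $\widehat{A}_{k,k+1}S = SA_S$ of (\ref{eq:Akk1}), with entries specialized as in (\ref{eq:dS}), says exactly that $H_S\, d_{k,k+1}\, H_S$ has matrix $A_S$, i.e.\ equals $\widehat{d}_S$; this is in fact the computation the paper carries out later in Proposition \ref{prop:Extend2}. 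The paper's own proof of this proposition avoids Lemma \ref{lem:XST} here by a shortcut: having identified $d_t = \widehat{d}_T$, it crosses the crossing arc and uses Axiom \ref{ax:iso} (2) together with the definition (\ref{eq:dSdef}) to get $d_s = Q d_t Q^{-1} = Q\widehat{d}_T Q^{-1} = \widehat{d}_S$. Either route works; yours simply front-loads the matrix identity that the paper defers to the converse construction.
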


\begin{proof}
Well-definedness is only in question in the non-swallowtail case.  Suppose that $R_\nu$ and $R_\mu$ are two regions that border the cell $e^d_\alpha$ at the vertex $e^0_\beta$.  (For $d=1$ there could be $2$ such regions,  for $d=0$ there may be many.  See Figure \ref{fig:GFtoCHD1} for a concrete example.)  We can get from $R_\nu$ to $R_\mu$ by passing through a sequence of $1$-cells with a common endpoint at $e_\beta^0$.  Thus, we can assume without loss of generality that $R_\nu$ and $R_\mu$ share such a $1$-cell in their boundary.  Moreover, if that $1$-cell is a cusp edge we may assume the two cusp sheets exist above $R_\mu$ but not above $R_\nu$. 



The splitting from (\ref{eq:LRnu}) defines an inclusion $i_\nu: V(e^d_\alpha) \rightarrow V(R_\nu)$ and projection $p_\nu: V(R_\nu) \rightarrow  V(e^d_\alpha)$, and analogous maps $i_\mu$ and $p_\mu$ are defined for $R_\mu$.   We need to show that $D_\nu= D_\mu$ where  
\[
D_\nu = p_\nu \circ d_\nu \circ i_\nu \quad \mbox{and} \quad D_\mu = p_\mu \circ d_\mu \circ i_\mu.
\]
The Axiom \ref{ax:iso} (2) or (3) (depending if the $1$-cell where $R_\nu$ and $R_\mu$ meet is a crossing or a cusp) provides a chain map $h: (V(R_\nu), d_\nu) \rightarrow (V(R_\mu), d_\mu)$.  It is clear from the definitions that $h \circ i_\nu = i_\mu$, and  $p_\nu = p_\mu \circ h$, so the equality $D_\nu = D_\mu$ follows in a routine manner.
%



To check (2) in the non-swallowtail case, we may assume that the same region $R_\nu$ is used in defining $d_\beta= d(e^0_\beta \rightarrow e^0_\beta)$ and $d(e^0_\beta \rightarrow e^d_\alpha)$.  The sheets of  $L(e^d_\alpha)$ not identified with sheets of $L(e^0_\beta)$ occur in pairs that meet at a cusp above $e^0_\beta$.  From (2) and (3) of Axiom \ref{ax:iso}, it follows that  $d(e^0_\beta \rightarrow e^d_\alpha)$ takes $S_b \mapsto S_a$ for each such pair of cusping sheets (with $S_a$ the upper of the two sheets)  and agrees with $d_\beta$ on the span of $L(e^0_\beta) \subset L(e^d_\alpha)$.  Thus, $d(e^0_\beta \rightarrow e^d_\alpha)$ is indeed related to $d_\beta$ precisely as in the boundary differential construction of Section \ref{sec:BoundaryDiff}.

In the swallowtail case, $d_{st}=d(e^0_{st} \rightarrow e^0_{st})$ is the differential $d_0$ from the component $R_0$ outside the swallowtail region, and this is the same as $d(e^0_{st} \rightarrow e^d_\alpha)$ and the boundary  differential for all neighboring $e^d_\alpha$ except for $e^2_S$, $e^2_T$, and $e^1_{cr}$.  
  In Section \ref{sec:BoundaryDiff}, the associated boundary differential for $e^0_{st} \rightarrow e^2_T$ is defined as $\widehat{d}_T = h_{k+1,k+2}d_{k,k+1} h_{k+1,k+2}$ where $d_{k,k+1} = d_0 \oplus d_{\mathit{cusp}}$ using the isomorphism $V(e^2_T) = V(e^0_{st})\oplus V_{\mathit{cusp}}$ where the splitting arises from identifying $L(e^0_{st})$ with the subset $\{S_1,\ldots, \widehat{S_{k}},\widehat{S_{k+1}}, \ldots, S_n \} \subset L(e^2_T)$, and $d_{\mathit{cusp}}S_{k+1} = S_k$.  (Subscripts indicate ordering above $e^2_T$.)  To see that this $\widehat{d}_T$ agrees with $d(e^0_{st} \rightarrow e^2_T) = d_t$,  travel from the region $R_0$ to $R_t$ by passing first through the $e^1_T$ cusp edge and then across the $h_{k+1,k+2}$ handleslide arc that appears in the $T$ half of the swallowtail region; according to Axiom \ref{ax:iso} (3) and (1) the differential from the MC2F will change first from $d_0$  to $d_{k,k+1}$ and then to $h_{k+1,k+2}d_{k,k+1} h_{k+1,k+2}$ when we arrive at $R_t$; thus, $d_t = \widehat{d}_T$.  Next, apply Axiom \ref{ax:iso} (2) and the definition of $\widehat{d}_S$ from (\ref{eq:dSdef}) to see that
  \[
  d(e^0_{st} \rightarrow d^2_S) = d_s = Q d_tQ^{-1} = Q \widehat{d}_T Q^{-1} = \widehat{d}_S. 
  \]
  Finally, note that for  $e^1_{cr}$ the boundary differential and $d(e^0_{st} \rightarrow e^1_{cr})$ are defined to agree with $d_t$ and $\widehat{d}_T$ respectively.    
\end{proof}

Suppose that the $1$-cell $e^1_\beta$ has initial and terminal vertices $e^0_-$ and $e^0_+$.  For each inclusion $e^1_\beta \rightarrow \overline{e^d_\alpha}$ as an edge, we associate a morphism 
\[
f(e^1_\beta \rightarrow e^d_\alpha): (V(e^d_\alpha), d(e^0_- \rightarrow e^d_\alpha)) \rightarrow (V(e^d_\alpha), d(e^0_+ \rightarrow e^d_\alpha)).   
\]
In the case when $e^1_\beta = e^d_\alpha$, we refer to $f_\beta:= f(e^1_\beta \rightarrow e^1_\beta)$ as the {\bf continuation map} for the edge $e^1_\beta$.  

\begin{itemize}

\item Assuming $e^1_\beta$ {\it has no endpoints at swallowtails}:  Choose a neighboring $2$-cell $e^2_\gamma$ containing $e^d_\alpha$ in its closure.  (When $e^d_\alpha =e^1_{\beta}$, there are two choices; when $d=2$, $e^2_\gamma = e^d_\alpha$.)  Shift $e^1_\beta$ slightly to a path $\sigma$ contained in the interior of a collar neighborhood $e^1_\beta \times [0, \epsilon) \subset \overline{e^2_\gamma}$ that is disjoint from $H_{-1}$ and such that $e^0_{\pm} \times[0,\epsilon)$ is disjoint from $H$.  Let $R_-$ and $R_+$ denote the components that contain the shifts of $e^0_-$ and $e^0_+$.  
The continuation map 
\[
f(\sigma): (V(R_-), d_-) \rightarrow (V(R_+), d_+)
\]
 is well-defined 
by Proposition \ref{prop:ContinuationMap} (6).
 As usual, we can split $L(e^2_{\gamma}) = L(e^d_\alpha) \sqcup L_{\mathit{cusp}}$.  We can assume $\sigma$ does not intersect handleslide arcs from $H$ with endpoint lifts on sheets of $L_{\mathit{cusp}}$ (as in Observation \ref{ob:CM2F} (4)  these arcs are not allowed to reach the cusp edge).  Then, $f(\sigma)$ respects the decomposition $V(R_-) =V(R_+) = V(e^2_\gamma) =  V(e^d_\alpha) \oplus V_{\mathit{cusp}}$ and we define  $f(e^1_\beta \rightarrow e^2_\alpha)$ as the component
\begin{equation} \label{eq:e2gamma}
f(\sigma) = f(e^1_\beta \rightarrow e^d_\alpha) \oplus id : V(e^d_\alpha) \oplus V_{\mathit{cusp}} \rightarrow V(e^d_\alpha) \oplus V_{\mathit{cusp}}.
\end{equation}



\item Assuming $e^1_\beta$ {\it has an endpoint at a swallowtail}, $e^0_{st}$:  In view of Convention \ref{conv:1}, the endpoint at $e^0_{st}$ must be the initial point of $e^1_S$, $e^1_T$, and $e^1_{cr}$.  In the case of $e^1_{cr}$, the $f(e^1_{cr}\rightarrow e^d_\alpha)$ are defined as above.  For $e^1_S$, define
$f(e^1_S \rightarrow e^2_S) = f(\sigma_S)$ 
for a path $\sigma_S$ that starts in $R_s$ near the swallow tail point, runs perpendicularly across the handleslide arcs in the $S$ corner of the swallowtail region, and then runs parallel to $e^1_S$ (remaining on the side of $e^1_S$ where the cusp sheets exist).  For other $e^d_\alpha$, define $f(e^1_S \rightarrow e^d_\alpha)$ to be a continuation map for a path that is a shift of $e^1_S$ to the outside of the swallowtail region.

Define the $f(e^1_T \rightarrow e^d_\alpha)$ similarly.  See Figure \ref{fig:RsRt}.

\end{itemize}

\begin{proposition}\label{prop:fmaps}
\begin{enumerate}
\item The morphisms $f(e^1_\beta \rightarrow e^d_\alpha)$ are well defined.

\item For any $e^1_\beta \rightarrow e^d_\alpha$, $f(e^1_\beta \rightarrow e^d_\alpha)$ is the boundary map associated to $f_\beta$ (as in Section \ref{sec:BoundaryDiff}).
\end{enumerate}
\end{proposition}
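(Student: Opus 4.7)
The plan is to parallel the proof of Proposition 4.3, treating the two claims in turn. For part (1), the auxiliary choices in defining $f(e^1_\beta \to e^d_\alpha)$ are the neighboring $2$-cell $e^2_\gamma$ and the specific shifted path $\sigma$ inside the chosen collar. Independence of the latter follows from Proposition 3.4 (6): two shifts lying in the open collar $e^1_\beta \times (0,\epsilon)$ are joined by a homotopy which, after shrinking $\epsilon$, avoids crossings and cusps (which lie on the $1$-skeleton) as well as super-handleslide points (which are disjoint from the $1$-skeleton by the niceness assumption of Definition 5.1). When $d<2$ there may be two choices of $e^2_\gamma$, one on each side of $e^1_\beta$. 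If $e^1_\beta$ is not contained in $\Sigma$, shifts on the two sides are joined by a homotopy through $e^1_\beta$ itself, and the same argument yields equality. If $e^1_\beta$ is a crossing or cusp edge, each handleslide arc of $H$ transverse to $e^1_\beta$ appears on both sides and contributes the same handleslide map, while Axiom 4.2 (2) or (3) provides the matching identification between the endpoints of $\sigma$ and $\sigma'$, forcing the two continuation maps to agree.

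For part (2) in the non-swallowtail case, I would use a single shifted path $\sigma$ to compute both $f_\beta := f(e^1_\beta \to e^1_\beta)$ and $f(e^1_\beta \to e^d_\alpha)$ simultaneously. By Observation 4.1 (4), handleslide arcs involving the cusp sheets of $e^1_\beta$ cannot cross $e^1_\beta$, and by compactness they are separated from $e^1_\beta$ by a positive distance; after shrinking $\epsilon$, the continuation map $f(\sigma)$ respects the decomposition $V(e^2_\gamma) = V(e^1_\beta) \oplus V_{\mathit{cusp}}$ and acts as the identity on $V_{\mathit{cusp}}$. This matches the boundary morphism $\widehat{f_\beta} = f_\beta \oplus \mathit{id}$ defined in Section 2. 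For the swallowtail case, $e^1_{cr}$ is treated as in the non-swallowtail case; for $e^1_S$ (and analogously for $e^1_T$), the path $\sigma_S$ decomposes into a first segment crossing the handleslide arcs inside the $S$ corner followed by a second segment running parallel to $e^1_S$. Axiom 4.2 (3) together with Definition 5.1 (2) identify the handleslide arcs crossed by the first segment as precisely the $(i,k)$-handleslides with $\langle d_0 S_k, S_i\rangle \ne 0$ together with a single $(k+1,k+2)$-handleslide; these handleslide maps pairwise commute, and their composition equals $H_S$. The second segment can be chosen to avoid handleslides involving the cusp sheets of $e^1_S$, so it contributes $f_S \oplus \mathit{id}$, and by Proposition 3.4 (2) we obtain $f(\sigma_S) = (f_S \oplus \mathit{id}) \circ H_S$, matching the boundary morphism $\widehat{f_S}$ from Section 2.

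The subtlest point will be the well-definedness argument in part (1) when $e^1_\beta$ is a crossing or cusp edge, since any homotopy between shifts on opposite sides of $e^1_\beta$ necessarily meets the singular set and so Proposition 3.4 (6) does not apply directly. Resolving this requires a direct, factor-by-factor comparison of $f(\sigma)$ and $f(\sigma')$, using Axiom 4.2 (1)--(3) together with the fact that each handleslide arc of $H$ projects to the same immersed arc on both sides of $e^1_\beta$, so that the resulting sequences of handleslide maps on the two sides are intertwined by the chain isomorphism prescribed by Axiom 4.2 at the shared $1$-cell.
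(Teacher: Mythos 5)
Your proposal is correct and follows essentially the same route as the paper: well-definedness reduces to comparing the shifts on the two sides of $e^1_\beta$, which is settled by the factor-by-factor matching of handleslide arcs crossing the edge with their lifts identified in $L(e^1_\beta)$ (exactly the mechanism you spell out in your final paragraph), and part (2) is the same computation, with the swallowtail case handled by decomposing $\sigma_X = \sigma_0 * \sigma_1$ and using Definition \ref{def:nice} (2) to get $f(\sigma_0) = H_X$, so that $f(\sigma_X) = (f_\beta \oplus \mathit{id}) \circ H_X$ matches the boundary morphism. The only difference is cosmetic: the paper applies the handleslide-matching argument uniformly, whereas you split off the non-singular-edge case to use the homotopy statement of Proposition \ref{prop:ContinuationMap} (6) first.
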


\begin{proof}

We only need to verify well-definedness when $e^d_\alpha = e^1_\beta$.  Then, there are two competing shifts, $\sigma_a$ and $\sigma_b$, of $e^1_\beta$ into the two neighboring cells $e^2_a$ and $e^2_b$.  Since $H$ is transverse to $e^1_\beta$, assuming $\sigma_a$ and $\sigma_b$ are sufficiently close to $e^1_\beta$ there will be a bijection between the sequence of handleslide arcs appearing along the paths $\sigma_a$ and $\sigma_b$; specifically, the bijection identifies the endpoints of the components of the intersection of $H$ with $e^1_\beta \times [-\epsilon,\epsilon]$.   Moreover, above $\sigma_a$  and  $\sigma_b$ the endpoint lifts of these handleslides belong to the subsets $i_a(L(e^1_\beta)) \subset L(e^2_a)$   and  $i_b(L(e^1_\beta)) \subset L(e^2_b)$, and agree in $L(e^1_\beta)$.  Thus, the $V(e^1_\beta)$ component of the continuation maps $f_a$ and $f_b$ agree, as required.

For (2), we need to show that for $e^1_\beta \rightarrow e^2_\gamma$, the map $f(e^1_\beta \rightarrow e^2_\gamma)$ is the boundary morphism for $f(e^1_\beta \rightarrow e^1_\beta)$.  In the non-swallowtail case or in the case of a swallowtail with $e^1_\beta = e^1_{cr}$, this is clear from the definition of boundary morphism and (\ref{eq:e2gamma}).

In the swallowtail with $e^1_\beta = e^1_X$  for $X = S$ or $T$, we have
\[
f(e^1_X \rightarrow e^2_X) = f(\sigma_X) =  f(\sigma_0 * \sigma_1) = f(\sigma_1) \circ f(\sigma_0) = (f(e^1_X \rightarrow e^1_X) \oplus \mathit{id}_{V_{\mathit{cusp}}} )\circ H_X
\]
where we decomposed $\sigma_X = \sigma_0 * \sigma_1$.  Here, $\sigma_0$ is the part of $\sigma_X$ that starts at $R_s$ or $R_t$ and crosses all of the handleslide arcs that end at the $X$ corner of $e^0_{st}$, and $\sigma_1$ is the remaining portion of $\sigma_X$ that runs parallel to $e^1_X$.   The map $H_X$ is as defined in 
 (\ref{eq:HXdef}).  That $f(\sigma_0)$ agrees with $H_X$ is a consequence of the arrangement of handleslide arcs at $e^0_{st}$ specified by Definition \ref{def:nice} (2).
\end{proof}

\subsection{Constructing a CHD from a MC2F} 

\begin{definition}  \label{def:1skeleton}
We say that a CHD $\mathcal{D}= (\{d_\alpha\},\{f_\beta\}, \{K_\gamma\})$ for $\mathcal{E}$ and a nice MC2F $\mathcal{C}=(\{d_\nu\}, H, H_{-1})$ {\bf agree on the $1$-skeleton} 
if for every $0$-cell, $e^0_\alpha$,  and every $1$-cell, $e^1_\beta$,
\begin{equation} \label{eq:dafb}
d_\alpha = d(e^0_\alpha \rightarrow e^0_\alpha)  \quad \mbox{and} \quad f_{\beta} = f(e^1_\beta \rightarrow e^1_\beta),
\end{equation}
where $d(e^0_\alpha \rightarrow e^0_\alpha)$ and $f(e^1_\beta \rightarrow e^1_\beta)$ denote the differentials and continuation maps associated to $0$-cells and $1$-cells by $\mathcal{C}$.  
\end{definition}

\begin{proposition}  \label{prop:CM2FtoCHD}
Let $\mathcal{E}$ be a compatible polygonal decomposition for $L$.  For any nice $\rho$-graded MC2F $\mathcal{C} = (\{d_\nu\},H,H_{-1})$, there exists a $\rho$-graded CHD $\mathcal{D}= (\{d_\alpha\}, \{f_\beta\}, \{K_\gamma\})$ such that $\mathcal{C}$ and $\mathcal{D}$ agree on the $1$-skeleton.
\end{proposition}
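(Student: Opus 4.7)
My plan is to set $d_\alpha := d(e^0_\alpha \rightarrow e^0_\alpha)$ and $f_\beta := f(e^1_\beta \rightarrow e^1_\beta)$ as in Section \ref{sec:Shifts}, and to obtain each chain homotopy $K_\gamma$ by applying Proposition \ref{prop:ContinuationMap}(6) to two path-homotopic paths running along the boundary of $e^2_\gamma$ shifted into its interior. Propositions \ref{prop:dmaps} and \ref{prop:fmaps} immediately guarantee that the $d_\alpha$ make $(V(e^0_\alpha), L(e^0_\alpha), d_\alpha)$ into ordered complexes, that each $f_\beta - \mathit{id}$ is strictly upper triangular, and that $f_\beta$ is a chain map between the boundary differentials $\widehat{d}_\pm$ of its endpoints. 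The $\rho$-grading requirements on the $d_\alpha$ and $f_\beta$ follow from the $\rho$-grading of $\mathcal{C}$ together with the final sentence of Proposition \ref{prop:ContinuationMap}.

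For a $2$-cell $e^2_\gamma$ with initial and terminal vertices $v_0, v_1$, I construct $K_\gamma$ as follows. Compatibility of $\mathcal{E}$ ensures that $\mathrm{int}(e^2_\gamma)$ is disjoint from $\Pi_B(\Sigma_{\mathit{cusp}} \cup \Sigma_{\mathit{cr}} \cup \Sigma_{\mathit{st}})$, so $\Sigma_\mathcal{C} \cap \mathrm{int}(e^2_\gamma)$ consists only of handleslide arcs and super-handleslide points, and for every component $R \subset \mathrm{int}(e^2_\gamma) \setminus \Sigma_\mathcal{C}$ one has $V(R) = V(e^2_\gamma)$. I take smooth paths $\sigma_{\mathit{ccw}}$ and $\sigma_{\mathit{cw}}$ obtained by perturbing the two arcs of $\partial e^2_\gamma$ from $v_0$ to $v_1$ slightly into $\mathrm{int}(e^2_\gamma)$; their endpoints lie in regions $R_0, R_1$ adjacent to $v_0, v_1$, and by Proposition \ref{prop:dmaps} the corresponding differentials are $\widehat{d}_{v_0}$ and $\widehat{d}_{v_1}$.

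Writing $\sigma_{\mathit{ccw}} = \sigma_1 * \cdots * \sigma_j$ so that $\sigma_i$ is the shift of the $i$-th boundary $1$-cell $e^1_i$ on the ccw path, Propositions \ref{prop:ContinuationMap}(2), \ref{prop:ContinuationMap}(5), and \ref{prop:fmaps}(2) combine to give
\[
f(\sigma_{\mathit{ccw}}) = \widehat{f_j}^{\eta_j} \circ \cdots \circ \widehat{f_1}^{\eta_1},
\]
with the signs $\eta_i$ recording whether the chosen orientation of $e^1_i$ agrees with the direction of $\sigma_{\mathit{ccw}}$, and similarly for $\sigma_{\mathit{cw}}$ and the reverse product. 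Any short arcs joining $\sigma_i$ to $\sigma_{i+1}$ near the shared vertex can be chosen in a region disjoint from $H$ and $H_{-1}$, contributing the identity. Since $\sigma_{\mathit{ccw}}$ and $\sigma_{\mathit{cw}}$ are path homotopic in the disk $e^2_\gamma$ via a homotopy whose image avoids crossings and cusps, Proposition \ref{prop:ContinuationMap}(6) produces a strictly upper triangular $K_\gamma \colon V(e^2_\gamma) \rightarrow V(e^2_\gamma)$ satisfying the chain-homotopy identity of Definition \ref{def:CHD}(3); its degree is $-1$ modulo $\rho$.

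The delicate step is identifying $f(\sigma_{\mathit{ccw}})$ with the required product at swallowtail vertices. For a $2$-cell $e^2_X$ (with $X \in \{S, T\}$) adjacent to an upward swallowtail $e^0_{\mathit{st}}$, the boundary morphism along the cusp edge $e^1_X$ is not merely $f_X \oplus \mathit{id}$ but carries the extra factor $H_X$ from (\ref{eq:HXdef}). The constructions of Section \ref{sec:Shifts} are tailored precisely so that $f(e^1_X \rightarrow e^2_X) = f(\sigma_X)$, where $\sigma_X$ first crosses every handleslide arc in the $X$-corner supplied by Axiom \ref{ax:endpoints}(3) (contributing exactly $H_X$) and then runs parallel to $e^1_X$; the niceness hypothesis (Definition \ref{def:nice}) guarantees that these handleslides are positioned correctly. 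Verifying that the piecewise shifts of the boundary edges of $e^2_X$ incident to $e^0_{\mathit{st}}$ concatenate into a single $\sigma_{\mathit{ccw}}$ (or $\sigma_{\mathit{cw}}$) realising the identification in the previous paragraph will require a direct local check at each swallowtail corner, with the downward swallowtail case being analogous.
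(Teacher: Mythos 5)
Your proposal is correct and follows essentially the same route as the paper: define $d_\alpha$ and $f_\beta$ via the shift constructions of Section \ref{sec:Shifts}, invoke Propositions \ref{prop:dmaps} and \ref{prop:fmaps} for the CHD conditions on $0$- and $1$-cells, and obtain $K_\gamma$ from Proposition \ref{prop:ContinuationMap}(6) applied to the two boundary-shift paths, with the swallowtail subtlety absorbed into Proposition \ref{prop:fmaps}(2) exactly as the paper does. The local check you defer at swallowtail corners is the same routine concatenation argument the paper handles implicitly (connecting arcs chosen in handleslide-free zones near $\partial N(e^0_{\mathit{st}})$), so no substantive gap remains.
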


\begin{proof}
Use (\ref{eq:dafb}) to define $\{d_\alpha\}$ and $\{f_\beta\}$.  The requirements of Definition \ref{def:CHD} (1) and (2) are easily seen to hold.  In particular, Proposition \ref{prop:dmaps} shows that the $\{f_\beta\}$ have the correct complexes for their domains and codomains, and Proposition \ref{prop:ContinuationMap} (4) shows that the $f_\beta-\mathit{id}$ is strictly upper triangular with degree $0$ mod $\rho$.

It remains to construct the homotopy operators $\{K_{\lambda}\}$. For a given $2$-cell, $e^2_\gamma$, recall the chain maps from  Definition \ref{def:CHD} (3), written there as $\widehat{f_j}^{\eta_j} \circ \cdots \circ \widehat{f_1}^{\eta_1}$ and $\widehat{f_m}^{\eta_m}\circ \cdots \circ \widehat{f_{j+1}}^{\eta_{j+1}}$.  Using Proposition \ref{prop:fmaps} (2), the definition of the $f(e^1_\beta \rightarrow e^2_\gamma)$, and Proposition \ref{prop:ContinuationMap} (5) and (6), we compute 
\begin{align*}
\widehat{f_j}^{\eta_j} \circ \cdots \circ \widehat{f_1}^{\eta_1} & =  f(e^1_j \rightarrow e^2_\gamma)^{\eta_j} \circ \cdots \circ f(e^1_1 \rightarrow e^2_\gamma)^{\eta_1}  \\
 & = f(\sigma_1^{\eta_1} * \cdots * \sigma_j^{\eta_j}) = f(\sigma_a) 
\end{align*} 
where the $\sigma_i$, $1 \leq i \leq j$, are appropriate shifts into $e^2_\gamma$ of the $1$-cells $e^1_{i}$, $1 \leq i \leq j$, that occur around one half of the boundary of $e^2_\gamma$ traversed from $v_0$ to $v_1$.  The concatenation $\sigma_a=\sigma_1^{\eta_1} * \cdots * \sigma_j^{\eta_j}$ is then a shift of this half of the boundary of $e^2_\gamma$ into its interior.  Similarly, $\widehat{f_m}^{\eta_m}\circ \cdots \circ \widehat{f_{j+1}}^{\eta_{j+1}} = f(\sigma_b)$ where $\sigma_b$ is a shift of the other half of the boundary of $e^2_\gamma$.  Since $\sigma_a$ and $\sigma_b$ are path homotopic in the interior of $e^2_\gamma$, Proposition \ref{prop:ContinuationMap} (6) gives the existence of the required (strictly upper triangular) homotopy operator $K_\gamma$.  See Figure \ref{fig:GFtoCHD2}.

\end{proof}

\begin{figure}

\quad

\labellist
\small
\pinlabel $v_0$ [t] at 80 -2
\pinlabel $v_1$ [b] at 120 156
\endlabellist
\centerline{\includegraphics[scale=.6]{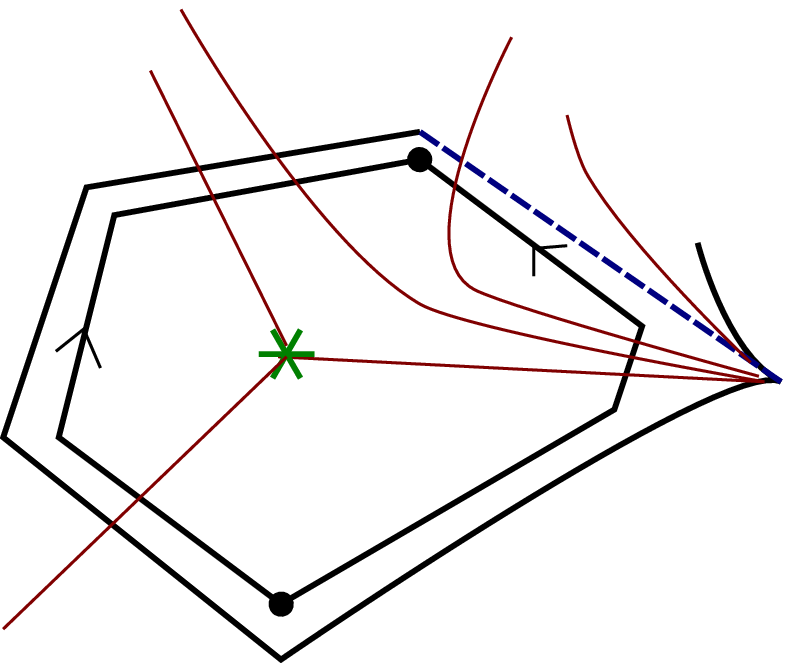}}

\caption{The homotopy operators $K_\gamma$ relate the continuation maps associated to  paths $\sigma_a$ and $\sigma_b$ that trace the boundary of $e^2_\gamma$ from $v_0$ to $v_1$.  The pictured $2$-cell has a swallowtail point at its right-most vertex.}
\label{fig:GFtoCHD2}
\end{figure}

\section{From CHD to MC2F}  \label{sec:CHDMC2F}

We next establish the construction, converse to that of the previous section, of a MC2F from a CHD.  Loosely, this can be viewed as a $2$-dimensional analog of factoring an upper-triangular matrix into a product of elementary matrices.  After observing that this completes the proofs of Theorem \ref{thm:Main2}, we use the connection between CHDs and MC2Fs to associate continuation maps to augmentations.  In Proposition \ref{prop:ObstructA}, we observe that properties of these continuation maps can obstruct the existence of linear at infinity generating families as well as generating families with trivial bundles as their domain.

\subsection{Lemmas for constructing MC2Fs} 

When constructing MC2Fs it is convenient to begin by specifying the handleslide sets $H$ and $H_{-1}$, and then check that the required differentials $d_{\nu}: R_{\nu} \rightarrow R_\nu$ can be constructed, satisfying Axiom \ref{ax:iso}.  We record in Propositions \ref{prop:Extend1}-\ref{prop:Extend3} several cases in which the existence of the differentials is automatic.  See Figure \ref{fig:Extend}.

\begin{proposition}  \label{prop:Extend1} 
Let $L \subset J^1M$ have an MC2F $\mathcal{C}$ defined near the boundary of a disk $D \subset M$ such that $D \cap \Sigma_{\mathit{cusp}} = \emptyset$, where $\Sigma_{\mathit{cusp}}$ is the base projection of cusp edges. Suppose that the handleslide set $H$ of $\mathcal{C}$ is extended over $D$ so that
\begin{itemize}
\item there are no super-handleslide points in $D$, and 
\item Axiom \ref{ax:endpoints} holds.  
\end{itemize}
Then, there is a unique way to assign differentials $d_\nu$ to the regions of $D \setminus \Sigma_\mathcal{C}$, so that $\mathcal{C}= (\{d_\nu\}, H, H_{-1})$ is an MC2F over $D$.  
\end{proposition}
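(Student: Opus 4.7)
The plan is to propagate the prescribed differentials from a collar of $\partial D$ inward along the invertible isomorphisms supplied by Axiom \ref{ax:iso}, and then verify local consistency at each codimension-$2$ stratum of $\Sigma_\mathcal{C} \cap D$.

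For uniqueness, I observe that across every handleslide arc (resp.\ crossing arc of $L$) Axiom \ref{ax:iso} (1) (resp.\ (2)) requires the two adjacent differentials to be conjugate via $h_{i,j}$ (resp.\ via the transposition $Q$), each of which is invertible; Axiom \ref{ax:iso} (3) is vacuous since $D$ contains no cusps. Hence the differential on any region of $D$ is determined by the given data on the collar of $\partial D$, and uniqueness follows from the connectedness of $D$.

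For existence, I fix a base region $R_0$ adjacent to $\partial D$ with its given differential $d_0$ and, for each region $R_\nu \subset D \setminus \Sigma_\mathcal{C}$, pick a path $\sigma$ from $R_0$ to $R_\nu$ transverse to $\Sigma_\mathcal{C}$; because $D$ has no cusps or super-handleslide points, the continuation map $f(\sigma)$ from Section \ref{sec:Comb} is a product of handleslide maps and transpositions $Q$. I then set $d_\nu := f(\sigma) \circ d_0 \circ f(\sigma)^{-1}$. Well-definedness reduces, since $D$ is simply connected, to showing $f(\gamma) = \mathrm{id}$ for small loops $\gamma$ enclosing each type of codimension-$2$ point of $\Sigma_\mathcal{C} \cap D$: (i) at a double point of $H$, Axiom \ref{ax:endpoints} (1) supplies a third handleslide ending there and the five-fold relation obtained by rearranging the second formula in (\ref{eq:Relations}) yields $f(\gamma)=\mathrm{id}$; (ii) at an intersection of $H$ with a crossing arc of $L$, Observation \ref{ob:CM2F} (4) rules out the configuration in which the handleslide and crossing indices coincide, so either $Q$ commutes with $h_{i,j}$ or the identity $Q h_{k,j} Q = h_{k+1,j}$ collapses the local product $Q \cdot h \cdot Q \cdot h$ to $\mathrm{id}$; (iii) at a triple point of $\Pi_B(L)$ the six-term monodromy vanishes via the braid relation $Q_1 Q_2 Q_1 = Q_2 Q_1 Q_2$; (iv) at a transverse intersection of two crossing arcs disjoint in $\Pi_F(L)$, the two transpositions commute and $Q^2=\mathrm{id}$.

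The final verification that the resulting $d_\nu$ constitute an MC2F over $D$ is then routine: Axiom \ref{ax:iso} holds by construction, and each $d_\nu$ is strictly upper triangular of degree $+1$ mod $\rho$ because conjugation by $h_{i,j}$ preserves these properties by a direct triangular matrix computation, while conjugation by $Q$ preserves them thanks to Observation \ref{ob:CM2F} (2). The main obstacle is the index bookkeeping at intersections of $H$ with crossing arcs of $L$, since handleslide labels change when a crossing is traversed; the crucial point is that $Q$ induces exactly the corresponding relabeling on handleslide maps themselves, and Observation \ref{ob:CM2F} (4) excludes the one exceptional configuration in which the local monodromy would fail to close up.
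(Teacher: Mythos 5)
Your proof is correct and follows essentially the same route as the paper's: propagate the given differential inward using the invertible relations of Axiom \ref{ax:iso} (which gives uniqueness immediately), and reduce existence/well-definedness to the triviality of the local monodromy of continuation maps around the codimension-$2$ points of $\Sigma_{\mathcal{C}}$ in $D$ (double points of $H$, intersections of $H$ with crossing arcs, triple points, and transverse intersections of crossing arcs). The paper organizes the propagation as a sweep-out by a Morse function on $D$ and delegates exactly these local consistency checks to the proof of Proposition \ref{prop:ContinuationMap}, whereas you carry them out directly via the handleslide relations (\ref{eq:Relations}) and Observation \ref{ob:CM2F}; the content is the same.
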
 

\begin{proof}
Let $f:(D, \partial D) \rightarrow ([0,1], \{0\})$ be a Morse function with a single critical point that is an absolute maximum at a point $x_0 \in D \setminus \Sigma_{\mathcal{C}}$ with $f(x_0)=1$, and such that the restriction of $f$ to $\Sigma_\mathcal{C}$ is Morse.   It suffices to show how to extend the assignment of differentials $\{d_\nu\}$ from $f^{-1}([0, a-\delta])$ to $f^{-1}([0,a+\delta])$ when $f^{-1}(\{a\})$ contains a single point $p$ that is a codimension $2$ (in $M$) point of $\Sigma_\mathcal{C}$ or a critical point of $f$ restricted to the $1$-dimensional strata of $\Sigma_{\mathcal{C}}$.  Since there are no swallowtails, cusps, or super-handleslides in $D$, we only need to consider:
\begin{itemize}
\item[(a)]  Critical points (max/min) of $f$ restricted to a crossing or handleslide arc.
\item[(b)]  Transverse intersections of two crossing and/or handleslide arcs.    
\item[(c)] Triple points of $\pi_Z(L)$.
\end{itemize} 

Parametrize a neighborhood $N$ of $p$ by $[-\delta,\delta] \times[-\delta, \delta]$ so that $f(x_1,x_2) = a+ x_2$, and  all crossings/handleslides exit $N$ along $x_2 = \pm \delta$.  Let $R_\pm$ denote the regions of $f^{-1}([0,a+\delta]) \setminus \Sigma_\mathcal{C}$ that contain the boundaries $x_1 = \pm \delta$.  Differentials for $R_\pm$ and for all regions in $f^{-1}([0,a]) \setminus \Sigma_\mathcal{C}$ are already specified at the bottom of $N$ where $x_2 = -\delta$.  At  $x_2 = +\delta$, as $x_1$ increases from $-\delta$ to $+\delta$, we pass through a sequence of regions $R_0, R_1, \ldots, R_n$ with $R_0 = R_-$ and $R_n = R_+$.  Since we already have a differential on $R_0$, Axiom \ref{ax:iso} specifies a unique way to assign differentials to $R_1, \ldots, R_{n-1}$.  We just need to verify that the differential on $R_{n-1}$ is related to the one already specified on  $R_n=R_+$ as required in Axiom \ref{ax:iso}.  This amounts to the statement that the continuation map associated to the paths from $R_-$ to $R_+$ at $x_2 = -\delta$ and $x_2 = +\delta$ agree, and this  has already been observed in the proof of Proposition \ref{prop:ContinuationMap}.  



\end{proof}

\begin{proposition}  \label{prop:Extend2}
Suppose that 
 near a swallowtail point $p$ for a Legendrian $L \subset J^1M$, an arbitrary upper triangular differential $d_0$ is assigned to the complement of the swallowtail region, and handleslide arcs, $H$, as required in Axiom \ref{ax:endpoints} (3) are placed within the swallowtail region.
 Then, there exists a unique way to assign differentials $d_\nu$ within the swallowtail region to extend $d_0$ and $H$ to an MC2F $\mathcal{C} = (\{d_\nu\}, H, H_{-1})$ defined near $p$.   
\end{proposition}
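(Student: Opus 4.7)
The strategy is to propagate $d_0$ inward from the region $R_0$ immediately outside the swallowtail region, using Axiom \ref{ax:iso} at each crossing of a handleslide arc, crossing arc, or cusp edge. Uniqueness follows immediately since each axiom uniquely prescribes the differential on a region in terms of that on an adjacent region. Existence reduces to verifying that this propagation is consistent along different paths within the swallowtail region.

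First, I observe that the handleslides specified by Axiom \ref{ax:endpoints}(3)---namely some $(i,k)$-handleslide arcs (with $i<k$) and two $(k+1,k+2)$-handleslide arcs on either side of the crossing locus---are pairwise commuting, in the sense of the first relation in (\ref{eq:Relations}): any two distinct handleslides $h_{u_1,l_1}$, $h_{u_2,l_2}$ among this collection satisfy $l_1\neq u_2$ and $l_2\neq u_1$ (since $i<k<k+1<k+2$). Therefore, no super-handleslides and no secondary handleslide endpoints need be introduced at transverse crossings of these arcs, and the axioms give a straightforward propagation: across a cusp edge, Axiom \ref{ax:iso}(3) extends the differential by the direct-sum cusp operator $d_{\mathit{cusp}}$; across an $(i,j)$-handleslide, Axiom \ref{ax:iso}(1) conjugates by $h_{i,j}$; across the crossing arc, Axiom \ref{ax:iso}(2) conjugates by the transposition $Q$ of sheets $S_{k+1}$ and $S_{k+2}$.

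For consistency, since the handleslides mutually commute and there are no super-handleslides, the only non-trivial path comparison is: going from $R_0$ into the swallowtail region through the $T$ cusp versus through the $S$ cusp, meeting back at the crossing arc. Propagating $d_0$ through the $T$ cusp and then the $(k+1,k+2)$-handleslide on the $T$ side yields the differential $\widehat{d}_T=h_{k+1,k+2}\,d_{k,k+1}\,h_{k+1,k+2}$ on the region adjacent to the crossing arc on the $T$ side, where $d_{k,k+1}=d_0\oplus d_{\mathit{cusp}}$. Propagating $d_0$ through the $S$ cusp and then all the handleslides in the $S$ corner yields, on the region adjacent to the crossing arc on the $S$ side, precisely $\widehat{d}_S=h_{k+2,k+1}\,d_{k,k+2}\,h_{k+2,k+1}$. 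The consistency condition imposed by Axiom \ref{ax:iso}(2) is $Q\widehat{d}_T Q^{-1}=\widehat{d}_S$, which is the very content of (\ref{eq:dTdef})--(\ref{eq:dSdef}) and is verified using the algebraic identities $\widehat{A}_{k,k+1}S=SA_S$ and $\widehat{A}_{k,k+1}T=TA_T$ established in Lemma \ref{lem:XST}, specialized via $a_{i,j}\mapsto\langle d_0 S_j, S_i\rangle$.

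The main obstacle is the careful bookkeeping for the $(i,k)$-handleslides on the $S$ side: their count equals $\langle d_0 S_k,S_i\rangle$, and the cumulative conjugation across them (together with the $S$-side $(k+1,k+2)$-handleslide) is exactly conjugation by the matrix $S=I+\sum_{i<k}a_{i,k}E_{i,k}+E_{k+1,k+2}$ of (\ref{eq:DifferentialST}); it is this precise matching that forces the consistency identity above to reduce to the cellular-DGA identities of Lemma \ref{lem:XST}. The downward swallowtail case is entirely analogous after vertical reflection.
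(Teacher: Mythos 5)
Your argument is correct and is essentially the paper's proof: uniqueness comes from the forced propagation via Axiom \ref{ax:iso}, and existence reduces to a single consistency identity at the swallowtail point, which both you and the paper verify via the matrix identities (\ref{eq:Akk1}) from Lemma \ref{lem:XST} with $a_{i,j}$ specialized to $\langle d_0 S_j, S_i\rangle$. The paper packages the check as the commutation of $d_0\oplus d_{k,k+1}$ with the full-loop conjugation $H_S\circ Q\circ H_T$ rather than as agreement of the two propagations where they meet at the crossing arc, but these are the same condition.
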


\begin{proof}
As usual we consider the case of an upward swallowtail point involving sheets $k$, $k+1$, and $k+2$.  Let $R_0$ be the region with two fewer sheets.  Suppose that as we pass through the swallowtail region from one cusp edge to the other the regions $R_1, \ldots, R_r$ appear in order. 
Passing from $R_0$ into $R_1$, the differential $d_1$ is specified by $d_0$ via Axiom \ref{ax:iso} (3); passing from $R_i$ to $R_{i+1}$ for $1 \leq i \leq r-1$, $d_{i+1}$ is specified by Axiom \ref{ax:iso} (1) and (2).  Finally, when passing from $R_{r}$ back into $R_0$, it is important to have that 
$d_r$ and $d_0$ are related as in Axiom \ref{ax:iso} (3), i.e. we need $d_r = d_0 \oplus d_{k,k+1}$ where $d_{k,k+1}S_{k+1} = S_k$.   
The net effect of passing from $R_1$ to $R_r$ is to conjugate the differential $d_1= d_0 \oplus d_{k,k+1}$ by $H_S \circ Q \circ H_T$ where $Q$ interchanges $S_{k+1}$ and $S_{k+2}$ and the maps $H_S$ and $H_T$ are as in (\ref{eq:HXdef}).  Thus, the required equation is 
\[
(d_0 \oplus d_{k,k+1}) \circ( H_S \circ Q \circ H_T) = ( H_S \circ Q \circ H_T) \circ (d_0 \oplus d_{k,k+1}).
\]
This is straightforward to verify with a direct computation.  Alternatively, observe that if $d_0$ has matrix $A$, then in the notation of Lemma \ref{lem:XST}  the matrix of $d_0 \oplus d_{k,k+1}$ is $\widehat{A}_{k,k+1}$.  The matrices $A_S$ and $A_T$ considered in that lemma have $A_SQ = Q A_T $ (by (\ref{eq:dSdef})), and so using the equation (\ref{eq:Akk1}) we compute
\[
\widehat{A}_{k,k+1} H_S  Q  H_T = H_S A_S Q H_T = H_S Q A_T H_T = H_SQH_T \widehat{A}_{k,k+1}. 
\] 
\end{proof}

\begin{figure}

\quad

\labellist
\small
\pinlabel $d_\nu=$ [r] at 140 138
\pinlabel $d_0$  at 44 318
\pinlabel $d_0$  at 320 318
\pinlabel $d_0$  at 620 324
\pinlabel $d_0$  at 890 324
\endlabellist
\centerline{\includegraphics[scale=.5]{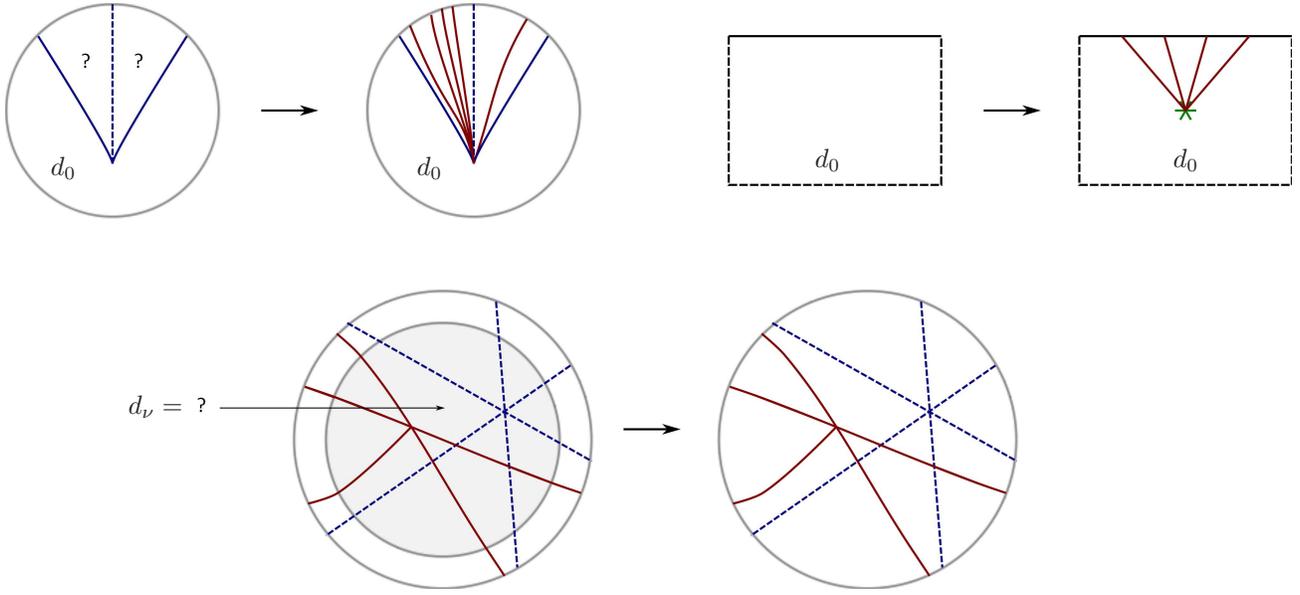}}

\caption{Tools for constructing MC2Fs.  (Clockwise from top left)  Determining differentials near swallowtail points (Proposition \ref{prop:Extend2});   adding superhandleslide points (Proposition \ref{prop:Extend3}); and extending the assignment of differentials $\{d_{\nu}\}$ over the interior of a disk disjoint from $\Sigma_{\mathit{cusp}}$ (Proposition \ref{prop:Extend1}).}
\label{fig:Extend}
\end{figure}

Suppose that an MC2F $\mathcal{C}'$ for $L \subset J^1M$ has been defined on a sub-surface $M' \subset M$ with non-empty boundary.  Let $D \subset (M' \setminus \Sigma_{\mathcal{C}'})$ be a half-open disk 
with $\partial D \subset \partial M'$.  Suppose that $L$ has $n$ sheets above $D$, and let $d_0$ denote the differential assigned to $D$ by $\mathcal{C}'$.

\begin{proposition}  \label{prop:Extend3}
 Suppose that for some $1 \leq i< j \leq n$, we place an $(i,j)$-super handleslide point $p$ in the interior of $D$, and add handleslide arcs in $D$ from $p$ to $\partial D$ as specified by Axiom \ref{ax:endpoints} (2) using the differential $d_0$.   Then, there is a unique way to assign differentials $\{d_\nu\}$ in $D$ to produce an MC2F, $\mathcal{C}$, that agrees with $\mathcal{C}'$ outside of $D$.  
\end{proposition}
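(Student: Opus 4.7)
The strategy is to propagate $d_0$ from the region adjacent to $\partial D$ into the rest of $D$ via Axiom \ref{ax:iso} (1), and verify that the only potential obstruction -- consistency around the super-handleslide $p$ -- is exactly the homotopy identity already observed in Case 3 of the proof of Proposition \ref{prop:ContinuationMap}.

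For uniqueness, let $R_\nu \subset D \setminus \Sigma_\mathcal{C}$ be any region and choose a smooth path $\sigma$ from $R_\nu$ back to a region adjacent to $\partial D$ that is transverse to the new handleslide arcs and disjoint from $p$. Axiom \ref{ax:iso} (1) then forces $d_\nu = \phi_\sigma \, d_0 \, \phi_\sigma^{-1}$, where $\phi_\sigma$ is the ordered product of handleslide maps crossed along $\sigma$. Since $D$ is simply connected and the only singular structure of $\mathcal{C}$ inside $D$ consists of $p$ together with its emanating arcs, by Proposition \ref{prop:ContinuationMap} (6) this formula for $d_\nu$ will be independent of $\sigma$ provided the composition $\Phi$ obtained by traversing a small loop around $p$ commutes with $d_0$.

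To verify this, observe first that the handleslide maps $h_{i,l}$ (for $i<u$) and $h_{u,j}$ (for $j>l$) attached to $p$ all pairwise commute by the relations in (\ref{eq:Relations}), so $\Phi$ is independent of the cyclic order chosen around $p$. Using $E_{i,l}E_{u,j}=0$ (since $l \neq u$), a direct expansion yields
\[
\Phi = \mathit{id} + \sum_{i<u}\langle d_0 S_u, S_i\rangle E_{i,l} + \sum_{j>l}\langle d_0 S_j, S_l\rangle E_{u,j} = \mathit{id} + d_0 E_{u,l} + E_{u,l} d_0,
\]
which is precisely the expression identified in Case 3 of the proof of Proposition \ref{prop:ContinuationMap}. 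Then $d_0^2 = 0$ gives
\[
d_0 \Phi - \Phi d_0 = d_0 E_{u,l} d_0 - d_0 E_{u,l} d_0 = 0,
\]
so $\Phi$ commutes with $d_0$ as required, yielding both existence and well-definedness of the $\{d_\nu\}$. The main technical point is recognising that Axiom \ref{ax:endpoints} (2) is precisely the combinatorial condition needed to make the monodromy around $p$ conjugate $d_0$ to itself; this is the same identification underlying the compactness-gluing interpretation mentioned in Remark \ref{rmk:GromovCoMorse} (2).
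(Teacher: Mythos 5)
Your proposal is correct and follows essentially the same route as the paper: propagate $d_0$ across the new arcs via Axiom \ref{ax:iso} (1) and reduce everything to the single consistency check that the composite of the handleslide maps around $p$, namely $I + d_0E_{u,l} + E_{u,l}d_0$, commutes with $d_0$, which follows from $d_0^2=0$ exactly as in your display. The only cosmetic issue is the mid-proof switch from the statement's $(i,j)$ to the axiom's $(u,l)$ notation, which does not affect the argument.
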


\begin{proof}
Again, Axiom \ref{ax:iso} gives a unique way to assign differentials as we pass from $R_0$, the unbounded region of $D$, 
(see Figure \ref{fig:Extend}) through the sequence of new regions $R_1, \ldots, R_r$ created by the handleslides with endpoints at $p$.  We need to verify that Axiom \ref{ax:iso} holds when we pass from $R_r$ back to $R_0$, i.e. that the composition of the handleslide maps associated to the sequence of arcs coming out of $p$  commutes with  $d_0$.  For an $(i,j)$-super handleslide, the matrix for this composition of handleslide maps is 
\[
H= I + D_0 E_{i,j} + E_{i,j}D_0
\]
 where $D_0$ is the matrix of $d_0$, and we compute
 \[
 D_0H= D_0+D_0E_{i,j}D_0 = HD_0.
 \]
\end{proof}

\subsection{Constructing an MC2F from a CHD}

Let $\mathcal{E}$ be a compatible polygonal decomposition for a Legendrian $L \subset J^1M$.
\begin{proposition}
\label{prop:CHDMorse2}
For any $\rho$-graded CHD $\mathcal{D} = (\{d_\alpha\},\{f_\beta\}, \{K_\gamma\})$ for $(L,\mathcal{E})$, there exists a nice $\rho$-graded MC2F $\mathcal{C} = (\{d_\nu\}, H, H_{-1})$  such that $\mathcal{D}$ and $\mathcal{C}$ agree on the $1$-skeleton. 
\end{proposition}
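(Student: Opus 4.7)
The plan is to construct $\mathcal{C}$ in two stages: first in a collar neighborhood of the $1$-skeleton of $\mathcal{E}$, and then over the interior of each $2$-cell. The data $\{d_\alpha\}, \{f_\beta\}$ of $\mathcal{D}$ dictate the handleslide arcs crossing $1$-cells and the differentials in regions adjacent to $0$-cells, while the homotopy operators $\{K_\gamma\}$ dictate the placement of super-handleslide points inside $2$-cells. Propositions \ref{prop:Extend1}--\ref{prop:Extend3} supply the local existence results needed at each stage to produce a coherent assignment of differentials $\{d_\nu\}$.

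For the $1$-skeleton: at each non-swallowtail $0$-cell $e^0_\alpha$, I will assign the differential $d_\alpha$ to a chosen adjacent region (suitably extended by $d_{\mathit{cusp}}$ where cusp sheets appear, as in Section \ref{sec:BoundaryDiff}) and propagate to the other regions bordering $e^0_\alpha$ by Axiom \ref{ax:iso}. Near each swallowtail point I invoke Proposition \ref{prop:Extend2} with handleslide arcs placed as required by Axiom \ref{ax:endpoints}(3) and Definition \ref{def:nice}(2). Along each non-singular $1$-cell $e^1_\beta$, I factor $f_\beta$ as a product of elementary handleslide maps $h_{u_N,l_N} \circ \cdots \circ h_{u_1,l_1}$ (such a factorization exists over $\Z/2$ since $f_\beta - \mathit{id}$ is strictly upper triangular), and place corresponding handleslide arcs crossing a slight shift of $e^1_\beta$ transversally in the prescribed order. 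Proposition \ref{prop:fmaps} then gives $f(e^1_\beta \to e^1_\beta) = f_\beta$, so $\mathcal{C}$ and $\mathcal{D}$ will agree on the $1$-skeleton.

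For the interior of a $2$-cell $e^2_\gamma$: I shift its two boundary halves into the interior as paths $\sigma_a, \sigma_b$ from $v_0$ to $v_1$, so that by Proposition \ref{prop:fmaps} the continuation maps $f(\sigma_a), f(\sigma_b)$ equal the two boundary compositions $\widehat{f_j}^{\eta_j} \circ \cdots \circ \widehat{f_1}^{\eta_1}$ and $\widehat{f_m}^{\eta_m} \circ \cdots \circ \widehat{f_{j+1}}^{\eta_{j+1}}$ from Definition \ref{def:CHD}(3). I expand $K_\gamma = \sum \langle K_\gamma(S_l), S_u \rangle E_{u,l}$, and for each nonzero coefficient insert a $(u,l)$-super-handleslide point well-separated from the boundary and from other super-handleslide points, together with the emanating arcs required by Axiom \ref{ax:endpoints}(2), terminating them at transverse intersections with the boundary handleslides on $\sigma_a$ or with arcs from neighboring super-handleslide points (Axiom \ref{ax:endpoints}(1)). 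Proposition \ref{prop:Extend3} extends the differentials over each super-handleslide insertion, and Proposition \ref{prop:Extend1} handles the remaining interstitial regions (which are disjoint from $\Sigma_{\mathit{cusp}}$ once swallowtail neighborhoods are excised).

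The main obstacle is to verify that this placement yields a globally consistent MC2F rather than merely a local one, i.e.\ that the handleslide network around any loop in $e^2_\gamma$ preserves the differential. This reduces to the matching condition that the total chain-homotopy contribution of the inserted super-handleslides equals the discrepancy $f(\sigma_a) - f(\sigma_b)$ given by $d_{v_1} K_\gamma + K_\gamma d_{v_0}$ in Definition \ref{def:CHD}(3). By Case 3 of the proof of Proposition \ref{prop:ContinuationMap}, a $(u,l)$-super-handleslide placed in isolation contributes precisely $d E_{u,l} + E_{u,l} d$ to the difference of continuation maps; an inductive placement, realizing one nonzero entry of $K_\gamma$ at a time, then produces the required chain homotopy and by Proposition \ref{prop:Extend1} closes up the fill of $e^2_\gamma$. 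The $\rho$-graded structure is inherited automatically: each handleslide $h_{u_i,l_i}$ in the factorization of $f_\beta$ corresponds to a nonzero matrix entry of $f_\beta$, hence satisfies $\mu(S_{u_i}) - \mu(S_{l_i}) = 0 \pmod{\rho}$, and each super-handleslide satisfies the analogous degree $-1$ condition from $K_\gamma$. Niceness with respect to $\mathcal{E}$ (Definition \ref{def:nice}) is enforced by how handleslides are placed near swallowtail points and transversally across $1$-cells elsewhere.
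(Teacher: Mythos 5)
Your overall architecture coincides with the paper's: handle the $1$-skeleton by assigning $d_\alpha$ (extended by $d_{\mathit{cusp}}$) near $0$-cells, invoking Proposition \ref{prop:Extend2} at swallowtails, and factoring each $f_\beta$ into handleslide maps whose arcs cross the $1$-cell; then fill the $2$-cells using super-handleslides for $K_\gamma$ together with Propositions \ref{prop:Extend1} and \ref{prop:Extend3}. The gap is in the $2$-cell step, and it is exactly the point you flag as the ``main obstacle.'' What must be produced there is not a chain homotopy in the abstract but a legal handleslide set over the whole disk: every arc entering from the collar (one for each factor of each $\widehat{f}_{\beta_i}$, plus the swallowtail arcs) and every arc forced to emanate from a super-handleslide by Axiom \ref{ax:endpoints}(2) must be closed off with no stray endpoints. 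Your proposed mechanism --- ``terminating them at transverse intersections with the boundary handleslides'' via Axiom \ref{ax:endpoints}(1) --- does not work as stated: that axiom only permits an $(i,j)$-arc to end where an $(i,m)$-arc crosses an $(m,j)$-arc, so the index labels must compose, and a generic intersection provides no legal termination. Propositions \ref{prop:Extend1} and \ref{prop:Extend3} cannot ``close up the fill'': they only supply the differentials once a handleslide set satisfying Axiom \ref{ax:endpoints} is already in hand.

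The closing-up is a mod-$2$ matching problem, and the identity from Definition \ref{def:CHD}(3) enters only through a normalization that your argument omits. The paper pushes all boundary-derived arcs to one side, rewrites them into lexicographic order (creating the new arcs forced by Axiom \ref{ax:endpoints}(1) at non-commuting crossings, via an induction that terminates), so that the continuation map across them is literally $I+\sum_{i<j}\alpha_{i,j}E_{i,j}$ and hence $\alpha_{i,j}=\langle (f(C)-\mathit{id})S_j,S_i\rangle$ for $C=\sigma_a*\sigma_b^{-1}$; it then places super-handleslides at the nonzero entries of $K=(f(\sigma_b))^{-1}K_\gamma$ --- not of $K_\gamma$ itself --- so that $f(C)-\mathit{id}=dK+Kd$ guarantees the same counts $\alpha_{i,j}$ (mod $2$) of arcs emanating from the super-handleslide side, and finally rearranges those arcs into lexicographic order \emph{without creating new endpoints} (this needs the commutation argument in Step 3) before joining the two families in pairs. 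Your entry-by-entry placement of super-handleslides for $K_\gamma$ misses this in two ways: Case 3 of Proposition \ref{prop:ContinuationMap} contributes $g\,(d''E_{u,l}+E_{u,l}d')\,h$ with conjugating continuation maps $g,h$, so realizing the entries of $K_\gamma$ one at a time does not yield the homotopy operator $K_\gamma$ (nor, in your geometry, the correct arc counts) unless those conjugations are controlled; and even granted the homotopy identity, it does not by itself pair off the arcs so that Axiom \ref{ax:endpoints} holds. Supplying this normalization-and-counting argument is precisely the content of the paper's Step 3 and is what your proof is missing.
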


\begin{proof}

\medskip

\noindent{\bf Step 1:}  Defining $\mathcal{C}$ in a neighborhood of the $0$-skeleton.

Let $N_0 \subset M$ consist of a union of small disks, $N_0 = \cup_\alpha N(e^0_\alpha)$, centered at the $0$-cells of $\mathcal{E}$.  Given $e^0_\alpha$, we define $\mathcal{C}$ on $N(e^0_\alpha)$ as follows.

\begin{itemize}
\item When $e^0_\alpha$ is {\it not a swallowtail point}:  We do not introduce any handleslide arcs in $N(e^0_\alpha)$, so we just need to define differentials $d_\nu:V(R_\nu) \rightarrow V(R_\nu)$ for each of the regions $R_\nu \subset N(e^0_\alpha)$ in the complement of the singular set of $L$.  For such a $R_\nu$, we use the usual splitting 
\[
V(R_\nu) = V(e^0_\alpha) \oplus V_{\mathit{cusp}} \quad \mbox{and put} \quad d_\nu = d_\alpha \oplus d_{\mathit{cusp}}.
\]
It is easy to check that Axiom \ref{ax:iso} holds.

\item When $e^0_\alpha$ is a {\it swallowtail point}:  Take the differential $d_0:= d_\alpha$ for the region $R_0$ outside the swallowtail region.  Next, add handleslide arcs as specified by Axiom \ref{ax:endpoints} (3), positioned in the $S$ and $T$ corners as in Definition \ref{def:nice} (2). 
By Proposition \ref{prop:Extend2}, there exists a unique way to define the differentials $d_\nu$ for the components $R_\nu$ of $N(e^0_\alpha) \setminus \Sigma_\mathcal{C}$ within the swallowtail region.

\end{itemize}

\medskip

\noindent{\bf Step 2:}  Extending $\mathcal{C}$ to a neighborhood of the $1$-skeleton. 

Let $N_1$ be the union of $N_0$ with small tubular neighborhoods, $N(e^1_\beta)$, of each $1$-cell.  (In particular, at each swallow tail point $e^0_{st}$,  the $N(e^1_L)$, $N(e^1_R)$, and $N(e^1_{cr})$ should meet the boundary of the disk neighborhood $\partial N(e^0_{st})$ along an arc that is disjoint from the handleslide set of $N(e^0_\alpha)$.)  Given $e^1_\beta$, we now extend $\mathcal{C}$ over $N(e^1_\beta) \setminus N_0$.  Begin by labeling the sheets of $L(e^1_\beta)$ as $S_1, S_2, \ldots, S_n$, and factor $f_\beta$ into a product of handleslide maps
\begin{equation}  \label{eq:hijfactor}
f_\beta = h_{i_r,j_r} \circ \cdots \circ h_{i_1,j_1}.
\end{equation}
(Such a factorization exists by the usual Gauss-Jordan elimination algorithm.)  In $N(e^1_\beta) \setminus N_0$, we then place a sequence of $r$ corresponding handleslide arcs that run across $N(e^1_\beta)$ perpendicularly to $e^1_\beta$; following the orientation of $e^1_\beta$, the 
lower and upper lifts of the $l$-th arc are the sheets above $N(e^1_\beta)$ that continuosly extend $S_{i_l}, S_{j_l}$.  

Starting from the neighborhood of $e^0_-$ where differentials for $\mathcal{C}$ are already defined and following the orientation of $e^1_\beta$ there is a unique way to assign differentials $\{d_\nu\}$ to the regions of $N(e^1_\beta) \setminus \mathcal{C}$ so that Axiom \ref{ax:iso} holds.  Moreover, the factorization (\ref{eq:hijfactor}) shows that when the disk neighborhood of $e^0_+$ is reached the differentials match the previously defined differentials from Step 1.

It is clear at this point that $\mathcal{C}$ agrees with $\mathcal{D}$ on the $1$-skeleton.

\medskip

\noindent{\bf Step 3:}  Extending $\mathcal{C}$ to the interior of $2$-cells.

Given a $2$-cell $e^2_\gamma$, we currently have $\mathcal{C}$ defined in a collar neighborhood, $U \subset \overline{e^2_\gamma}$, of $\partial \overline{e^2_\gamma}$.  Let $C = (\partial U) \cap e^2_\gamma$, i.e. $C$ is a closed curve that is the one boundary component of $U$ belonging to the interior of $e^2_\gamma$.  Let $w_0,w_1 \in C$ denote points on $\partial N(e^0_{v_{i}}) \cap C$ corresponding to the initial and terminal vertices, $v_0$ and $v_1$, of $e^2_\gamma$.  In the case $v_i$ is a swallowtail point where the $S$ or $T$ corner appears in $e^2_\gamma$, place $w_i$ on the $e^1_{cr}$ side of the handleslide arcs that meet $\partial N(e^0_{st})$.
   There are two arcs $\sigma_a$ and $\sigma_b$ oriented from $w_0$ to $w_1$ and such that $C = \sigma_a \cup \sigma_b$.  Along these arcs a sequence of handleslides from $N_1$ meet $C$ transversally, and by construction the continuation maps are
\[
f(\sigma_a) = \widehat{f_j}^{\eta_j} \circ \cdots \circ \widehat{f_1}^{\eta_1} \quad \mbox{and} \quad f(\sigma_b) = \widehat{f_m}^{\eta_m}\circ \cdots \circ \widehat{f_{j+1}}^{\eta_{j+1}}
\]
where we follow the notation of Definition \ref{def:CHD}.    
[This uses that at any swallowtail vertices of $e^2_\gamma$, the handleslide arcs with endpoints on $\partial N(e^0_{st})$ produce the factor of $H_X$ that is required in the definition of boundary map for the edges $e^1_X$ with $X=S$ or $T$.]

The homotopy operator $K_\gamma: (V(e^2_\gamma), \widehat{d}_{v_0})  \rightarrow (V(e^2_\gamma), \widehat{d}_{v_1})$ from $\mathcal{D}$ then satisfies
\[
f(\sigma_a) - f(\sigma_b) = d_{w_1} K_\gamma +K_\gamma d_{w_0}
\]
where the differentials $d_{w_i}$ are from $\mathcal{C}$ at the regions $R_{w_i}$ bordering the $w_i$; they agree with the boundary differentials $\widehat{d}_{v_i}$ written above with the domain and codomain of $K_\gamma$ (by Proposition \ref{prop:dmaps}).  Moreover, 
post-composing both sides with $(f(\sigma_b))^{-1}$ leads to the equation
\begin{equation}  \label{eq:Khomotopy}
f(C) - \mathit{id} = d K + K d
\end{equation}
where we orient $C$ as $\sigma_a * \sigma_b^{-1}$; $K$ is the upper-triangular homotopy operator $K = (f(\sigma_b))^{-1}K_\gamma$; and $d = d_{w_0}$.  

For convenience, in the following we parametrize $\overline{e^2_{\gamma}}$ by $I^2 = [0,1] \times[0,1]$ with coordinates $(x_1,x_2) \in I^2$.  Moreover, we  assume that $N_1$ (the current domain of definition of $\mathcal{C}$) is an $\epsilon$-neighborhood of $\partial(I^2)$, and has its boundary curve $C$ oriented clockwise.  Furthermore, we assume all handleslides arcs in $N_1 \cap \overline{e^2_\gamma}$ appear near the left hand boundary in $[0,\epsilon] \times (\epsilon, 1-\epsilon)$.     
 Note that the differential assigned by $\mathcal{C}$ to the common region bordered by the top, bottom and right side of $\partial(I^2)$ is $d=d_{w_0}$.    

To complete the proof, we extend $\mathcal{C}$ over the remainder of $I^2$.  The approach is pictured schematically in  Figure \ref{fig:CHDtoGF}.  We will use the following terminology:
 We say that the handleslide set $H$ is {\bf lexicographically ordered} along an oriented path $\sigma$ if the indices of upper and lower lifts, $(i,j)$, of handleslide arcs that intersect $\sigma$ are weakly increasing along $\sigma$ with respect to lexicographical order.  We say that two handleslide arcs {\bf commute} if the indices of their lifts, $(i_1,j_1)$ and $(i_2,j_2)$, satisfy $j_1 \neq i_2$ and $i_1 \neq j_2$.

\begin{itemize}

\item  In $[\epsilon, 1/4] \times I$, we extend the handleslide arcs from left to right,  changing their vertical ordering as we go (observing, Axiom \ref{ax:endpoints}), so that $H$ becomes lexicographically ordered along $\{1/4\} \times I$ (as $x_2$ increases). 

[This is possible:  Start by extending the handleslide arcs that begin at $\{\epsilon\} \times I$ to $\{1/4\} \times I$, achieving the required permutation by factoring it into transpositions and interchanging adjacent handleslide arcs in a corresponding manner.  With this initial step carried out, we return to any points where an $(i,l)$-handleslide arc crosses an $(l,j)$-handleslide arc for some $1 \leq i < l < j \leq n$, and for each such point, $x$, create a new $(i,j)$-handleslide arc with one endpoint at $x$ and the other at an appropriate point on $\{1/4\} \times I$.  Repeat this procedure inductively.  Note that any $(i,j)$-handleslide arc created at the $m$-th step will have $i-j \geq m$, so that after finitely many steps the process is complete.]

For any $i<j$, let $\alpha_{i,j}$ 
 be the number of $(i,j)$-handleslide arcs at $\{1/4\} \times I$.  We can arrange that each $\alpha_{i,j}$ is either $0$ or $1$ 
since an adjacent pair of $(i,j)$-handleslide arcs with endpoints at $\{1/4\} \times I$ can be joined together into a single arc with a local maximum for the $x_1$-coordinate just before $x_1 = 1/2$.     The continuation map for $\{1/4\} \times I$ agrees with $f(C)$ (by Proposition \ref{prop:ContinuationMap} (6)), and by definition is 
\[
f(C) =  h_{n-1,n}^{\alpha_{n-1,n}}(h_{n-2,n}^{\alpha_{n-2,n}} h_{n-2,n-1}^{\alpha_{n-2,n-1}}) \cdots (h_{2,n}^{\alpha_{2,n}}\cdots h_{2,3}^{\alpha_{2,3}})(h_{1,n}^{\alpha_{1,n}} \cdots h_{1,2}^{\alpha_{1,2}}).
\]
Observe that (due to the lexicographic ordering of subscripts) the matrix of this product is precisely
\[
I+ \sum_{i<j} \alpha_{i,j} E_{i,j},
\]
so  
\[
\alpha_{i,j} = \langle (f - \mathit{id}) S_j, S_i \rangle.
\]


\item  In $[3/4, 1] \times I$, we start by placing in lexicographic order at $x_2 = 7/8$ an $(i,j)$-super handleslide point, for each $i<j$ with  $\langle K S_{j}, S_{i}\rangle=1$.
In addition, we add handleslide arcs as specified by Axiom \ref{ax:endpoints} (2) running approximately horizontally from $\{7/8\} \times I$ to $\{3/4\} \times I$.   
As in Observation \ref{ob:CM2F} (1), we can always use the differential $d=d_{w_0}$ in determining what (if any) handleslide arcs need to appear with endpoint at a super-handleslide.  It follows, at least mod $2$, that the total number of $(i,j)$-handleslide arcs along $\{3/4\} \times I$ is 
\[
\langle (d K + K d) S_j , S_i \rangle = \langle (f - \mathit{id})S_j, S_i \rangle = \alpha_{i,j}.
\]

By Proposition \ref{prop:Extend3}, there is a unique way to assign differentials in $[3/4, 1] \times I$ to any new regions that are created by the handleslides ending at the new super-handleslide points.  

\item In $[1/2,3/4] \times I$, we extend the handleslide arcs from $x_1=3/4$ to $x_1=1/2$, arranging that the handleslides are 
lexicographically ordered at $\{1/2\} \times I$.  Moreover, this can be done {\it without creating additional handleslide endpoints.}  

[Assume inductively that the subset $X_{<m}$ of handleslide arcs that have their right endpoint at an $(i,j)$-superhandleslide points with $i< m$ have been extended to $\{1/2\}\times I$ where they   
appear in lexicographic order.   
To inductively complete the extension process, we need to extend the subset $X_m$ of those handleslide arcs with right endpoint at an $(m,j)$-super-handleslide.  
Any such arc in $X_m$ will be an $(i',j')$-handleslide for with $i'\leq m < j \leq j'$.  Consequently, arcs in $X_m$ commute with one another.     At $x_1=3/4$, all handleslide arcs from $X_{<m}$ appear below the arcs from $X_m$.  Consequently, to extend a given $(i',j')$-handleslide arc from $X_m$ appropriately, it will only need to cross $(i'',j'')$-handleslides from $X_{<m}$ having $ i'\leq i''$.  In these cases the $(i',j')$ and $(i'',j'')$ are such that the arcs commute since $i''\leq m < j'$ (because $(i'',j'')$ has an endpoint at an $(i,j)$-superhandleslide with $i < m$) and $i' \leq i'' <j''$.]     

Since no new handleslide arcs were created, the number of $(i,j)$-handleslide arcs at $\{1/2\} \times I$ is still $a_{i,j}$ mod $2$, and joining $(i,j)$-handleslide arcs together in pairs, we can assume the number of arcs is exactly $\alpha_{i,j}$.  


\item In $[1/4,1/2]\times I$, since handleslide arcs are lexicographically ordered along $x_1=1/4$ and $x_1=1/2$ and are in bijection (preserving $(i,j)$), we simply join the end points.  

\end{itemize}

With the handleslide set complete, Proposition \ref{prop:Extend1} shows that the differentials $\{d_\nu\}$ can be defined over $[\epsilon, 3/4] \times I$.  This completes the construction of $\mathcal{C}$.

\end{proof}

\begin{figure}
\labellist
\small
\pinlabel $(1,2)$  at 196 48
\pinlabel $(1,n)$  at 196 96
\pinlabel $\vdots$  at 196 80
\pinlabel $\vdots$  at 196 156
\pinlabel $\vdots$  at 196 228
\pinlabel $(2,3)$  at 196 128
\pinlabel $(2,n)$  at 196 176
\pinlabel $(n-1,n)$  at 196 272
\pinlabel $(1,2)$ [l] at 400 48
\pinlabel $(1,n)$ [l] at 400 96
\pinlabel $(2,3)$ [l] at 400 128
\pinlabel $\vdots$ [l]  at 408 156
\pinlabel $\vdots$ [l] at 408 80
\pinlabel $\vdots$  [l] at 408 228
\pinlabel $(n-1,n)$ [l] at 396 272
\endlabellist
\centerline{\includegraphics[scale=.6]{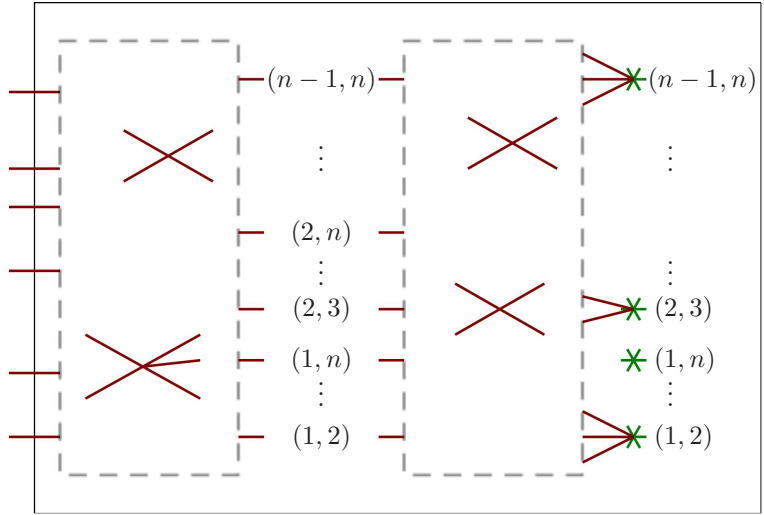}}

\caption{Extending $\mathcal{C}$ over the interior of $e^2_\gamma$.}
\label{fig:CHDtoGF}
\end{figure}

Theorem \ref{thm:Main2} that was stated in the introduction now follow easily.

\begin{proof}[Proof of Theorem \ref{thm:Main2}]
Proposition \ref{prop:AugmentationCHD} shows the existence of a $\Z/2$-augmentation is equivalent to the existence of a CHD.  
Since  a small perturbation can make any MC2F nice with respect to a given $\mathcal{E}$, Proposition \ref{prop:CM2FtoCHD} and  Proposition \ref{prop:CHDMorse2} show that $L$ has a CHD if and only if $L$ has a MC2F.  
 The statement about generating families then follows from Proposition \ref{prop:GFMorse2}.
\end{proof}

\subsection{Monodromy representations for augmentations}  \label{sec:MonoAug}

Using Proposition \ref{prop:CHDMorse2}, we can now associate a fiber homology space with monodromy representation to an augmentation.

Let $\mathcal{E}$ be a compatible polygonal decomposition for $L$, and let $\epsilon: (\mathcal{A}, \partial) \rightarrow (\Z/2, 0)$ be an augmentation of the corresponding Cellular DGA.  Let $e^0_\alpha \in \mathcal{E}$ be a $0$-cell.  Consider a small neighborhood $N(e^0_\alpha)$, and let $x_0 \in N(e^0_\alpha)$ be disjoint from the cusp/crossing locus; if $e^0_\alpha$ is a swallowtail point, we assume $x_0$ is outside the swallowtail region. 
Via Proposition \ref{prop:AugmentationCHD}, there is a unique CHD, $\mathcal{D}$, for $\mathcal{E}$ associated to $\epsilon$.  Then, using Proposition \ref{prop:CHDMorse2} there exists an MC2F $\mathcal{C}$ that agrees with $\mathcal{D}$ on the $1$-skeleton.  We can assume the handleslide set of $\mathcal{C}$ is disjoint from $N(e^0_\alpha)$, or the part of $N(e^0_\alpha)$ outside the swallowtail region in the case $e^0_\alpha$ is a swallowtail.    

We define the {\bf fiber homology} and {\bf monodromy representation} of $\epsilon$ at $x_0$, by
\[
H(\epsilon_{x_0}):= H(\mathcal{C}_{x_0}) \quad \mbox{and} \quad \Phi_{\epsilon, x_0}:= \Phi_{\mathcal{C},x_0}:\pi_1(M,x_0) \rightarrow \mathit{GL}(H(\epsilon_{x_0})).
\]
(Recall $H(\mathcal{C}_{x_0})$ and $\Phi_{\mathcal{C},x_0}$ are defined in Corollary \ref{cor:Cx0}.)
\begin{proposition}
For $x_0$ as above, $H(\epsilon_{x_0})$ and $\Phi_{\epsilon, x_0}$ are well-defined.
\end{proposition}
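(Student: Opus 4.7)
The plan is to show that the pair $(H(\mathcal{C}_{x_0}), \Phi_{\mathcal{C},x_0})$ does not depend on the choice of MC2F $\mathcal{C}$; independence from $x_0$ within its region follows from Corollary \ref{cor:Cx0}. Let $\mathcal{C}$ and $\mathcal{C}'$ be two nice MC2Fs agreeing with the unique CHD $\mathcal{D} = (\{d_\alpha\},\{f_\beta\},\{K_\gamma\})$ of $\epsilon$ on the $1$-skeleton. Without loss of generality (both choices may be obtained via Proposition \ref{prop:CHDMorse2}, or arranged by a preliminary isotopy of handleslide sets), I may further assume each has handleslide set disjoint from $N(e^0_j)$ for every $0$-cell $e^0_j$, outside the swallowtail region when $e^0_j$ is a swallowtail.

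First I would identify the two complexes at $x_0$. Under the above assumption, the region $R_0$ of $M\setminus\Sigma_\mathcal{C}$ containing $x_0$ coincides with the analogous region for $\mathcal{C}'$, so $V(R_0)$ is a common vector space. The argument in the proof of Proposition \ref{prop:dmaps} then shows that the differential on any region of $N(e^0_\alpha)\setminus \Sigma_\mathcal{C}$ adjacent to $e^0_\alpha$ is uniquely determined by $d_\alpha = d(e^0_\alpha\to e^0_\alpha)$ through Axiom \ref{ax:iso}: by the direct-sum formula $d_0 = d_\alpha \oplus d_\mathit{cusp}$ in the non-swallowtail case, or by propagation through the mandatory handleslides of Axiom \ref{ax:endpoints}(3) in the swallowtail case, uniquely by Proposition \ref{prop:Extend2}. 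Since $d_\alpha$ is read from $\mathcal{D}$, the complexes $(V(R_0),d_0)$ coincide for the two choices.

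For the monodromy, given $[\sigma]\in\pi_1(M,x_0)$, I use surjectivity of $\pi_1$ under inclusion of the $1$-skeleton (which holds for any $2$-dimensional CW complex) to replace $\sigma$ by a path-homotopic loop $\tilde\sigma$ lying in an arbitrarily thin tubular neighborhood of the $1$-skeleton of $\mathcal{E}$. Explicitly, $\tilde\sigma$ is a concatenation of slight shifts of edges $e^1_{\beta_j}$ into adjacent $2$-cells, joined by short corner arcs inside each $N(e^0_j)$, routed outside the swallowtail region whenever possible. By Proposition \ref{prop:fmaps}, the continuation factor across each shifted edge equals the boundary morphism of $f_{\beta_j}^{\eta_j}$, which is prescribed by $\mathcal{D}$ and independent of the side of the shift. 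Each corner arc crosses only crossings and cusps of $L$, by our handleslide hypothesis, so its continuation factor is fixed by Axiom \ref{ax:iso}(2),(3) and the local geometry of $L$; when $\tilde\sigma$ uses a shift of $e^1_{cr}$ passing through the swallowtail region it picks up the mandatory $H_X$ factor from (\ref{eq:HXdef}), also dictated by $d_\alpha$. Consequently $f_\mathcal{C}(\tilde\sigma) = f_{\mathcal{C}'}(\tilde\sigma)$ as chain maps, and Proposition \ref{prop:ContinuationMap}(3) then yields
\[
H(f_\mathcal{C}(\sigma)) = H(f_\mathcal{C}(\tilde\sigma)) = H(f_{\mathcal{C}'}(\tilde\sigma)) = H(f_{\mathcal{C}'}(\sigma)),
\]
so $\Phi_{\mathcal{C},x_0}([\sigma]) = \Phi_{\mathcal{C}',x_0}([\sigma])$.

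The main technical obstacle I anticipate is the bookkeeping inside swallowtail regions, where $\mathcal{C}$ and $\mathcal{C}'$ may arrange their Axiom \ref{ax:endpoints}(3) handleslides in topologically distinct patterns and where $\tilde\sigma$ must cross them whenever it traverses $e^1_{cr}$. I plan to handle this by combining the uniqueness statement of Proposition \ref{prop:Extend2} with the identity (\ref{eq:Akk1}) from Lemma \ref{lem:XST}, to conclude that the composite continuation factor across the swallowtail region is entirely determined by $d_\alpha$ and hence identical for $\mathcal{C}$ and $\mathcal{C}'$.
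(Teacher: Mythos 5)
Your argument is correct and follows essentially the same route as the paper: both show that, because $\mathcal{C}$ agrees with $\mathcal{D}$ on the $1$-skeleton and its handleslide set is disjoint from (or in standard form near) the $0$-cell neighborhoods, the differential at $x_0$ and the continuation maps for loops built from shifted edges and short corner arcs are all dictated by $\mathcal{D}$, with Proposition \ref{prop:ContinuationMap}(3) handling the homotopy to an arbitrary representative of $[\sigma]$. Your extra care at swallowtail points (via Proposition \ref{prop:Extend2} and Lemma \ref{lem:XST}) is a more explicit version of the paper's remark that $H$ has a standard form there; no gap.
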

\begin{proof}
Since $\mathcal{C}$ and $\mathcal{D}$ agree on the $1$-skeleton, the differential on $V(R_0)$ (where $R_0 \subset M \setminus \Sigma_{\mathcal{C}}$ and $x_0 \in R_0$) is determined by the differential $d_\alpha$ on $V(e^0_\alpha)$ from $\mathcal{D}$ via the boundary differential construction.

In addition, the continuation maps $f(\sigma)$ for paths $\sigma$ that are shifts of a $1$-cell $e^1_\beta$ into bordering $2$-cells are determined by the map $f_\beta$ from $\mathcal{D}$ via the boundary map construction.  Any $[\sigma] \in \pi_1(M_, x_0)$ can be represented by a concatenation of such paths with some paths, $\tau_i$, contained in the $N(e^0_\alpha)$.  In the swallowtail case, the handleslide set, $H$, of $\mathcal{C}$ has a standard form in the $S$ and $T$ sides of the part of $N(e^2_\alpha)$ in the swallowtail region, while in other cases $H$ is disjoint from $N(e^2_\alpha)$.  Thus, we can take the $\tau_i$ to be independent of $\mathcal{C}$, so that $\Phi_{\mathcal{C},x_0}([\sigma])$ is determined by $\mathcal{D}$.

\end{proof}

\begin{remark}  
As in Remark \ref{rem:localsystem}, although we have only defined $(H(\epsilon_{x_0}), \Phi_{\epsilon, x_0})$ near $0$-cells, up to isomorphism there is a unique local system on all of $M$ extending $(H(\epsilon_{x_0}), \Phi_{\epsilon, x_0})$.  
\end{remark}

\begin{observation}
\begin{enumerate}
\item From Corollary \ref{cor:Cx0}, it follows that the isomorphism type of $H(\epsilon_{x_0})$ is independent of $x_0$.
\item Explicitly, the group $H(\epsilon_{x_0})$ is computed from $\epsilon$ as the homology of $(V(e^0_\alpha), d_\alpha)$ where 
\[
d_\alpha S_{j} = \sum_{i} \epsilon(a^{\alpha}_{i,j}) S_i.
\]
The monodromy map $\Phi_{\epsilon, x_0}([\sigma])$ is computed by homotoping $\sigma$ into a concatenation of $1$-cells, $e^1_{\beta_1}* \cdots * e^1_{\beta_m}$; shifting each such $1$-cell into the interior of a neighboring $2$-cell (as in Section \ref{sec:Shifts}); and then connecting the endpoints with paths $\tau_i$ in the $N(e^0_\alpha)$.  The resulting map has the form
\[
\Phi_{\epsilon, x_0}([\sigma]) = f(\tau_{m}) \circ \widehat{f}_{\beta_m}^{\pm 1} \circ f(\tau_{m-1}) \circ \widehat{f}_{\beta_{m-1}}^{\pm 1} \circ \cdots \circ f(\tau_{1}) \circ \widehat{f}_{\beta_1}^{\pm1} \circ f(\tau_{0})
\]
where each $\widehat{f}_{\beta_i}$ is obtained from the map $f_{\beta_i}$ from $\mathcal{D}$ as in the boundary map construction.  Except in the case of a swallowtail point, the $f(\tau_{i})$ are simply compositions of the projection/inclusion maps, $p$ and $j$, from cusp edges, and the permutation maps from crossings.  At swallowtails, when $\tau_{i}$ connects an endpoint outside of the swallowtail region to one within the $S$ (resp. $T$) region, the map $f(\tau_i)$ is 
\[
\begin{array}{lccc}   & H_S \circ j  & \mbox{or} &  p \circ H_S \\ \mbox{(resp.} &  H_T \circ j  & \mbox{or} &  p \circ H_T) \end{array}
\]
depending on the orientation of $\tau_i$.

\end{enumerate}
\end{observation}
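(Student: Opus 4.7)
The plan is to prove (1) and (2) by transferring the observation through the CHD-MC2F correspondence. For any MC2F $\mathcal{C}$ that agrees with the CHD $\mathcal{D}$ of $\epsilon$ on the $1$-skeleton (Propositions \ref{prop:AugmentationCHD} and \ref{prop:CHDMorse2}), we have $H(\epsilon_{x_0}) = H(\mathcal{C}_{x_0})$ and $\Phi_{\epsilon,x_0} = \Phi_{\mathcal{C},x_0}$ by definition, and the preceding proposition shows these depend only on $\epsilon$. Part (1) then follows immediately from Corollary \ref{cor:Cx0}(1), which asserts the basepoint-independence of $H(\mathcal{C}_{x_0})$; the extension to arbitrary $x_0 \in M$ is the standard local-system argument already recorded in Remark \ref{rem:localsystem}.

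For the cellular formula $d_\alpha S_j = \sum_i \epsilon(a^\alpha_{i,j}) S_i$, I would read off the bijection from Proposition \ref{prop:AugmentationCHD}, under which $\epsilon(a^\alpha_{p,q}) = \langle d_\alpha S^\alpha_q, S^\alpha_p\rangle$. Choosing $x_0$ in a region $R_0 \subset N(e^0_\alpha) \setminus \Sigma_\mathcal{C}$, Step 1 of the proof of Proposition \ref{prop:CHDMorse2} gives $d_{R_0} = d_\alpha \oplus d_{\mathit{cusp}}$ on $V(R_0) = V(e^0_\alpha) \oplus V_{\mathit{cusp}}$, with the second summand acyclic, so $H(\epsilon_{x_0}) = H(V(e^0_\alpha), d_\alpha)$ as claimed. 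When $e^0_\alpha$ is a swallowtail vertex, the same identification holds provided $x_0$ is taken outside the swallowtail region, where the construction of $\mathcal{C}$ again yields $d_{R_0} = d_\alpha \oplus d_{\mathit{cusp}}$.

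For the monodromy formula, I would first homotope a representative $\sigma$ to a concatenation $e^1_{\beta_1} * \cdots * e^1_{\beta_m}$ of oriented edges of $\mathcal{E}$, then replace each $e^1_{\beta_i}$ by a small shift into an adjacent $2$-cell and insert short connecting paths $\tau_i$ inside $N(e^0_{\alpha_i})$. By Proposition \ref{prop:fmaps}(2), the continuation map across each shift equals $\widehat{f}_{\beta_i}^{\pm 1}$, where the sign records whether the shift traverses $e^1_{\beta_i}$ with or against its chosen orientation. Proposition \ref{prop:ContinuationMap}(2) then expresses $\Phi_{\mathcal{C}, x_0}([\sigma])$ as the alternating composition displayed in the observation. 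Since $\mathcal{C}$ and $\mathcal{D}$ agree on the $1$-skeleton, the $\widehat{f}_{\beta_i}$ are determined by $\epsilon$ via the boundary morphism construction of Section \ref{sec:BoundaryDiff}.

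The main technical point is determining $f(\tau_i)$ near a swallowtail vertex $e^0_{\mathit{st}}$. Away from swallowtails, Step 1 of the proof of Proposition \ref{prop:CHDMorse2} places no handleslides in $N(e^0_\alpha)$, so $f(\tau_i)$ reduces to a composition of the cusp projection/inclusion maps $p$ and $j$ together with the crossing permutations of Axiom \ref{ax:iso}(2). At a swallowtail, the construction places exactly the standard handleslide configuration of Definition \ref{def:nice}(2) in $N(e^0_{\mathit{st}})$; the arcs met by $\tau_i$ as it passes from the exterior region into the $S$ (resp.\ $T$) corner then compose, by the explicit formula (\ref{eq:HXdef}), to $H_S$ (resp.\ $H_T$). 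Combining these factors with the appropriate cusp projection/inclusion yields the four forms $H_S \circ j$, $p \circ H_S$, $H_T \circ j$, $p \circ H_T$ listed in the observation, completing the verification.
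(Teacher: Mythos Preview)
Your proposal is correct and is precisely the unpacking the paper intends: the Observation is stated without proof because it is meant to follow directly from the construction in Section~\ref{sec:MonoAug} together with Propositions~\ref{prop:AugmentationCHD}, \ref{prop:CHDMorse2}, \ref{prop:fmaps}, \ref{prop:ContinuationMap}, and Step~1 of the proof of Proposition~\ref{prop:CHDMorse2}, exactly as you have traced through. There is nothing to add.
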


We arrive at the following obstructions to particular types of generating families.

\begin{proposition}  \label{prop:ObstructA}
\begin{enumerate} 
\item If $H(\epsilon_{x_0}) \neq \{0\}$ for all augmentations $\epsilon$, then $L$ does not have a linear at infinity generating family.
\item If $\Phi_\epsilon$ is non-trivial for all augmentations $\epsilon$, then $L$ does not have a generating family whose domain is a trivial bundle over $M$.
\end{enumerate}
\end{proposition}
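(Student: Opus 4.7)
The plan is to prove both statements by contrapositive. Suppose $L$ admits a tame at infinity generating family $F$ of the type specified in (1) or (2). By Proposition \ref{prop:GFMorse2}, $L$ has a $0$-graded MC2F, $\mathcal{C} = (\{d_\nu\},H,H_{-1})$, which in case (1) satisfies $H(\mathcal{C}_{x_0}) = \{0\}$ and in case (2) has trivial monodromy representation $\Phi_{\mathcal{C},x_0}$. The goal is then to promote $\mathcal{C}$ into an augmentation $\epsilon$ whose fiber homology and monodromy coincide with those of $\mathcal{C}$, contradicting the hypothesis.

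First I would fix a compatible polygonal decomposition $\mathcal{E}$ for $L$ and, after a small isotopy of $\mathcal{C}$ that avoids crossing super-handleslides, obtain a nice MC2F $\tilde{\mathcal{C}}$ with respect to $\mathcal{E}$. By Proposition \ref{prop:ContinuationMap} (3) and (6), such a perturbation changes continuation maps along loops only up to strictly upper-triangular chain homotopies, hence preserves both $H(\mathcal{C}_{x_0})$ and $\Phi_{\mathcal{C}, x_0}$ up to isomorphism. Next, Proposition \ref{prop:CM2FtoCHD} produces a $0$-graded CHD $\mathcal{D}$ that agrees with $\tilde{\mathcal{C}}$ on the $1$-skeleton, and Proposition \ref{prop:AugmentationCHD} converts $\mathcal{D}$ into a $0$-graded augmentation $\epsilon$.

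It remains to identify $(H(\epsilon_{x_0}), \Phi_{\epsilon, x_0})$ with $(H(\tilde{\mathcal{C}}_{x_0}), \Phi_{\tilde{\mathcal{C}}, x_0})$. This is already built into the construction of Section \ref{sec:MonoAug}: by definition, $H(\epsilon_{x_0}) = H(\mathcal{C}'_{x_0})$ for any MC2F $\mathcal{C}'$ agreeing with $\mathcal{D}$ on the $1$-skeleton, and the proof of well-definedness shows that $\Phi_{\epsilon, x_0}$ depends only on $\mathcal{D}$ (through the $d_\alpha$ and $f_\beta$). Since $\tilde{\mathcal{C}}$ is itself such an MC2F, we immediately obtain $H(\epsilon_{x_0}) \cong H(\tilde{\mathcal{C}}_{x_0}) \cong H(\mathcal{C}_{x_0})$ and $\Phi_{\epsilon, x_0} \cong \Phi_{\tilde{\mathcal{C}}, x_0} \cong \Phi_{\mathcal{C}, x_0}$. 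In case (1) this yields an augmentation with $H(\epsilon_{x_0}) = \{0\}$, and in case (2) an augmentation with trivial $\Phi_{\epsilon, x_0}$, completing the contrapositive.

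The main obstacle I anticipate is verifying that the passage from $\mathcal{C}$ to a nice MC2F $\tilde{\mathcal{C}}$ does not disturb the fiber homology or monodromy. Although the required adjustments only locally rearrange handleslide arcs near the $1$-skeleton of $\mathcal{E}$ (and near swallowtail points, where the standardization of Definition \ref{def:nice} must be enforced), one must confirm that each local move, such as sliding a handleslide arc past a cusp or crossing, merging parallel arcs, or repositioning a super-handleslide slightly, either leaves continuation maps along loops unchanged or introduces only a strictly upper-triangular chain homotopy. This is a direct case analysis parallel to the one already carried out in the proof of Proposition \ref{prop:ContinuationMap}, and once it is in place, the remainder of the argument is a straightforward chain of the previously established correspondences.
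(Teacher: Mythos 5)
Your proof is correct and follows essentially the same route as the paper, whose proof is a one-line appeal to the itemized statements of Proposition \ref{prop:GFMorse2} together with the definition of $(H(\epsilon_{x_0}), \Phi_{\epsilon,x_0})$ from Section \ref{sec:MonoAug} — i.e.\ exactly your contrapositive chain: generating family $\rightarrow$ MC2F with vanishing fiber homology or trivial monodromy $\rightarrow$ nice MC2F $\rightarrow$ CHD $\rightarrow$ augmentation, with the identification of invariants supplied by the well-definedness of the fiber homology and monodromy for augmentations. Your additional care about the perturbation to a nice MC2F preserving these invariants is just an elaboration of the ``small perturbation'' remark already implicit in the proof of Theorem \ref{thm:Main2}, so there is nothing to flag.
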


\begin{proof}
Follows directly from the itemized statements in Proposition \ref{prop:GFMorse2} and the definition of $(H(\epsilon_{x_0}), \Phi_{\epsilon, x_0})$.  
\end{proof}

\section{Examples}
\label{sec:Examples}


An easy corollary of Theorem \ref{thm:Main2} is that loose Legendrian surfaces \cite{Murphy} do not have generating families, since they do not have augmentations.  In this section, we consider further examples, including Legendrians to which the more refined obstructions of Proposition \ref{prop:ObstructA} can be applied.

\subsection{Treumann-Zaslow Legendrians}  In \cite{TZ}, Treumann and Zaslow introduce an elegant class of Legendrian surfaces associated to trivalent graphs.
For these surfaces, they study associated moduli spaces of constructible sheaves, and construct examples of non-exact Lagrangian fillings.  In this section, we apply our approach to provide necessary and sufficient conditions for the existence of $\Z/2$-augmentations for this class of Legendrian surfaces.  

\medskip

Let $\Gamma \subset M$, be a  tri-valent graph.  In Section 2.1 of \cite{TZ}, a front projection called the {\it hyperelliptic wavefront} modeled on $\Gamma$ is constructed\footnote{Strictly speaking, \cite{TZ} considers the case of $M=S^2$, but the construction works equally well to produce a front projection in $J^1M$ for any surface $M$.} producing a Legendrian that we denote $L_\Gamma \subset J^1M$.  The base projection $\pi_B:L_\Gamma \rightarrow M$, is a $2$-fold branched covering of $M$, with branch points at the vertices of $\Gamma$.  Crossing arcs of the front projection sit above the edges of $\Gamma$; above each vertex of $\Gamma$,  $\pi_F(L_\Gamma)$ matches a standard coordinate model in which $3$ crossing arcs share a common endpoint.  
The front singularities that appear above vertices in $\pi_F(L_\Gamma)$ are non-generic, but appear with codimension $1$ in the space of front projections as the $D^-_4$ bifurcation of fronts;  see \cite{ArnoldGuseinZadeVarchenko}.  A generic front for a surface Legendrian isotopic to $L_\Gamma$ is obtained by replacing the singularities above vertices with the configuration of $3$-swallowtail points pictured in Figure \ref{fig:D4}.

\begin{figure}
\labellist
\small
\pinlabel $B_1$ [r]  at 490 90
\pinlabel $B_2$ [l] at 586 90
\pinlabel $B_3$ [l] at 540 90
\pinlabel $B_4$ [bl] at 580 188
\pinlabel $A_1$ [b] at 538 214
\pinlabel $A_0$ [t] at 538 12
\pinlabel $C_1$  at 506 146
\pinlabel $C_2$  at 570 146
\endlabellist
\centerline{\includegraphics[scale=.5]{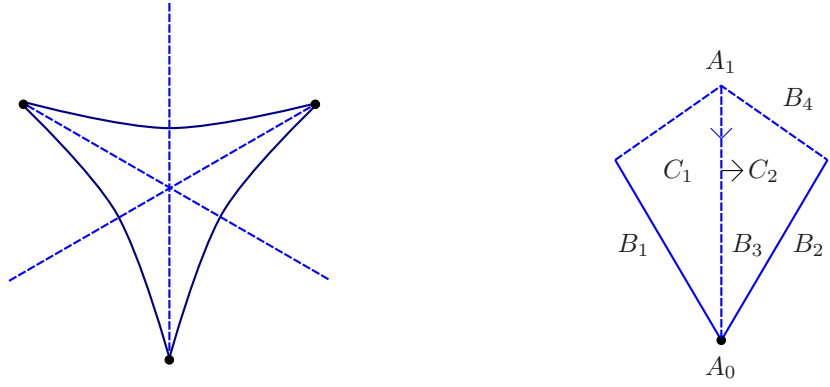}} 

\caption{(left) A generic base projection for $L_\Gamma$ near vertices of $\Gamma$.  There are three upward swallowtail points all placed on the upper sheet of $L_\Gamma$.  (right) Labeling for $0$- and $1$-cells used in proof of Proposition \ref{prop:TZsurface}. }
\label{fig:D4}
\end{figure}








We refer to the components of $M \setminus \Gamma$ as {\bf faces} of $\Gamma$, but note that they do not need to be disks.  
\begin{proposition} \label{prop:TZsurface} The Legendrian surface $L_\Gamma$ has a $\Z/2$-augmentation if and only if every face of $\Gamma$ has an even number of vertices. 
\end{proposition}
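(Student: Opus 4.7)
The plan is to apply Theorem \ref{thm:Main2} and reduce the existence of an augmentation to the combinatorial problem of constructing an MC2F on $L_\Gamma$. Above each face of $\Gamma$ the Legendrian has just two sheets $S_1,S_2$ that cross along the bounding edges, so Observation \ref{ob:CM2F}(2) forces the differential $d_\nu$ on every face region of any MC2F to vanish. At each swallowtail---three per vertex of $\Gamma$, all upward with $k=1$---Axiom \ref{ax:endpoints}(3) then forces precisely two $(2,3)$-handleslide arcs to emerge on opposite sides of the crossing locus, and by Observation \ref{ob:CM2F}(4) these arcs are confined to the regions where $L_\Gamma$ has three or four sheets, i.e.\ to a tubular neighborhood of $\Gamma$ made up of the swallowtail regions at each vertex together with tubes along each edge (along which the crossing arc emanating from the swallowtail at one endpoint extends to meet the swallowtail at the other endpoint).

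Next, I would perform a careful local analysis at each vertex $v$ of $\Gamma$: there are six $(2,3)$-handleslide endpoints at $v$ (two per swallowtail), and Axioms \ref{ax:endpoints} and \ref{ax:iso} together constrain the allowed local matchings. Enumerating the allowable configurations should show that the ways to complete the handleslide data near $v$ correspond bijectively to the choice of a proper $3$-coloring of the three edges of $\Gamma$ incident to $v$; dually, this is a proper $3$-coloring of the three face-corners at $v$. Along each edge $e$ of $\Gamma$, the handleslides propagating through its crossing-arc tube must match at both endpoints, which forces the local colorings at the two endpoints of $e$ to be compatible. Thus the existence of an MC2F is equivalent to the existence of a global proper $3$-edge-coloring of $\Gamma$, equivalently a proper $3$-face-coloring.

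Finally, for a trivalent graph embedded in a surface, a proper $3$-face-coloring exists if and only if every face boundary walk has even length: the $\Rightarrow$ direction follows from the observation that around the boundary of a color-$i$ face only the other two colors appear, so the boundary walk alternates and is even; the $\Leftarrow$ direction is the classical Tait-style argument. Since the length of the boundary walk equals the number of vertices visited with multiplicity, this yields the proposition. For the converse construction of an MC2F from a $3$-face-coloring, I would build the handleslide arcs explicitly along the edge-tubes of $\Gamma$ and then extend the assignment of differentials $\{d_\nu\}$ using Proposition \ref{prop:Extend1} over face interiors and Proposition \ref{prop:Extend2} within each vertex's swallowtail cluster.

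The main obstacle will be the vertex-neighborhood analysis: rigorously tracking how the upper and lower sheet lifts of a $(2,3)$-handleslide arc transform when the arc crosses the $(1,2)$-crossing along an edge of $\Gamma$, and identifying which of the $5!!=15$ a priori pairings of the six handleslide endpoints at a vertex are actually consistent with Axioms \ref{ax:endpoints} and \ref{ax:iso}. Once this local classification is in hand and seen to parametrize a $3$-element set in an $S_3$-equivariant way, both directions of the proof follow essentially by assembling local data into global data.
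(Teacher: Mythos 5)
Your reduction to MC2Fs via Theorem \ref{thm:Main2}, the vanishing of the differentials over the two-sheeted face regions, and the two $(2,3)$-handleslide arcs forced at each swallowtail all match the paper's starting point, but two of your key steps fail. First, the geometric premise is wrong: $\pi_B\colon L_\Gamma\to M$ is a $2$-fold branched cover, so away from the small swallowtail regions at the vertices there are only two sheets --- there are no $3$- or $4$-sheeted ``tubes'' along the edges of $\Gamma$ --- and Observation \ref{ob:CM2F}(4) does not confine handleslide arcs to multi-sheeted regions; it only forbids an $(i,j)$-arc from crossing a crossing or cusp arc involving its own lift sheets. In fact handleslide arcs must enter the two-sheeted regions above the faces: the paper's sufficiency construction (Figure \ref{fig:Slice}) routes exactly one arc from each vertex into each adjacent face and then joins these arcs in pairs inside the face, which is possible precisely when the face has an even number of vertices. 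So the ``edge-tube propagation'' on which your vertex-matching analysis rests does not exist.

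Second, your conjectured local classification (``completions at a vertex $\leftrightarrow$ proper $3$-colorings of the incident edges/corners'') is not only unestablished but leads to the wrong global criterion for general $M$: your chain of equivalences would give ``augmentation $\Leftrightarrow$ proper $3$-face-coloring of $\Gamma$,'' and your final step ``$3$-face-colorable $\Leftrightarrow$ all faces even'' is a sphere-only fact. On $T^2$ the cubic graph dual to the $K_7$ triangulation (the Heawood graph embedded in the torus) has all faces hexagonal, hence satisfies the even-face condition and carries an augmentation by Proposition \ref{prop:TZsurface}, but it is not $3$-face-colorable. So MC2F existence cannot be governed by a $3$-coloring; the correct local invariant is only a parity. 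That is exactly how the paper argues: for necessity it computes in the cellular DGA at each vertex that any augmentation satisfies $\epsilon(b^1_{1,2})+\epsilon(b^2_{1,2})=1$, i.e.\ (via Proposition \ref{prop:CHDMorse2}) an odd number of handleslide arcs enters the face through the two $1$-cells at each vertex of the face, and then observes that $(1,2)$-arcs in the two-sheeted region above a face can neither end there nor escape across the bounding crossing arcs, so they must pair up, forcing each face to have an even number of vertices. Your proposal is missing both this DGA/parity mechanism for necessity and a valid sufficiency construction.
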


\begin{proof}
In a neighborhood $N(v) \subset M$ of any vertex of $v \in \Gamma$, an MC2F $\mathcal{C}$ can be constructed with handleslide set as pictured in Figure \ref{fig:Slice}.   The differentials $\{d_v\}$ are $0$ in regions where $L_\Gamma$ is $2$-sheeted;  Proposition \ref{prop:Extend2} then defines differentials in neighborhoods of swallowtail points, and this assignment of differentials can be extended to a neighborhood of the cusp locus so that for regions bordering the cusp locus the only non-zero $d_\nu S_i$ is $d_\nu S_b = S_a$ with $S_b$ and $S_a$ the lower and upper sheets at the cusp edge.  Finally, Proposition \ref{prop:Extend1} extends the differentials $\{d_v\}$ over the remainder of $N(v)$.  
 
Note that one handleslide arc enters each of the three faces adjacent to $v$.  For any face $F$ with an even number of vertices, it is then easy to extend $\mathcal{C}$ over $F$ by connecting the handleslide arcs that exist near the vertices in pairs via paths in the interior of $F$.   

\begin{figure}
\labellist
\small
\endlabellist
\centerline{\raisebox{1cm}{\includegraphics[scale=.4]{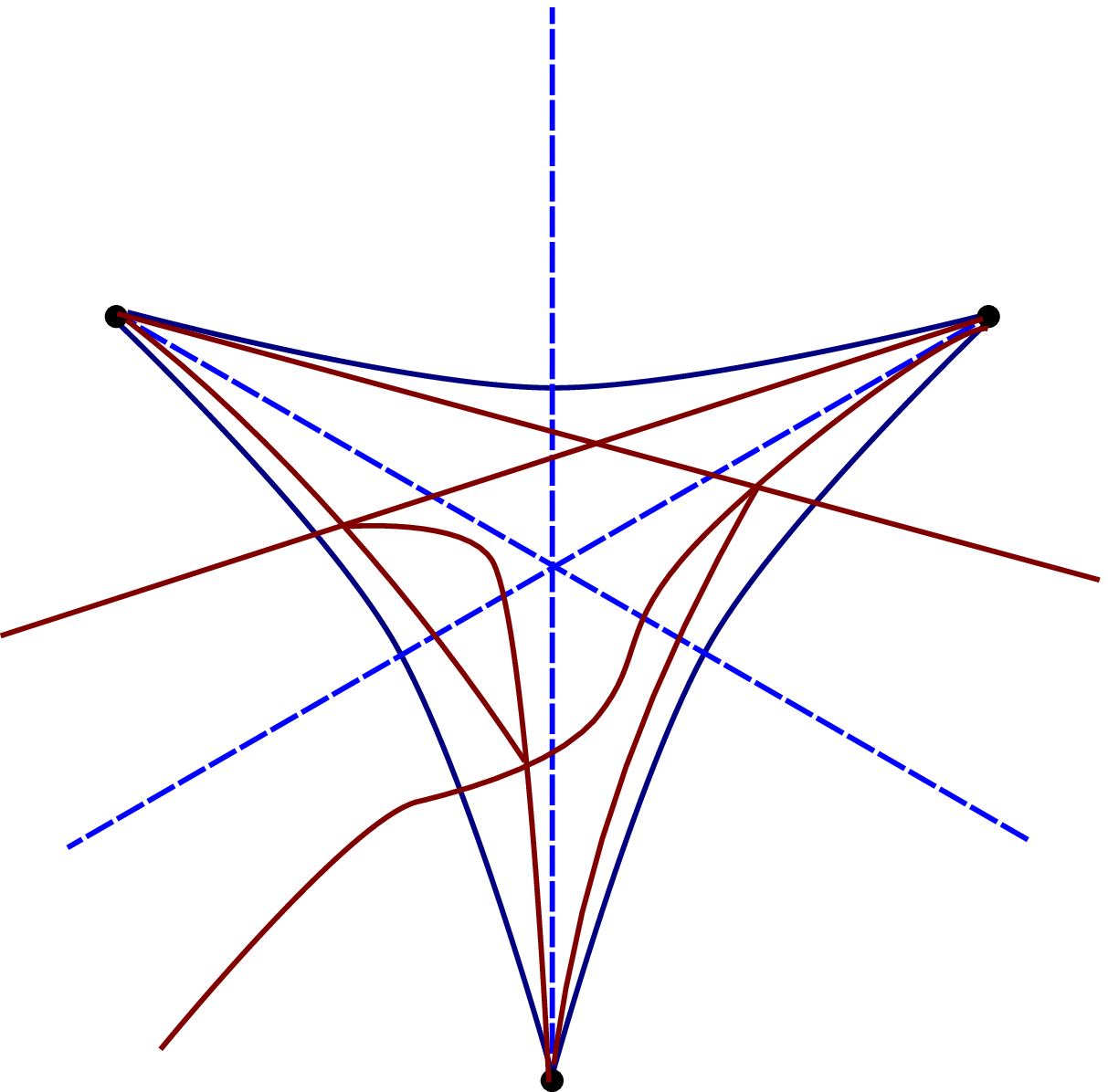}}\quad \quad \quad \quad \includegraphics[scale=.5]{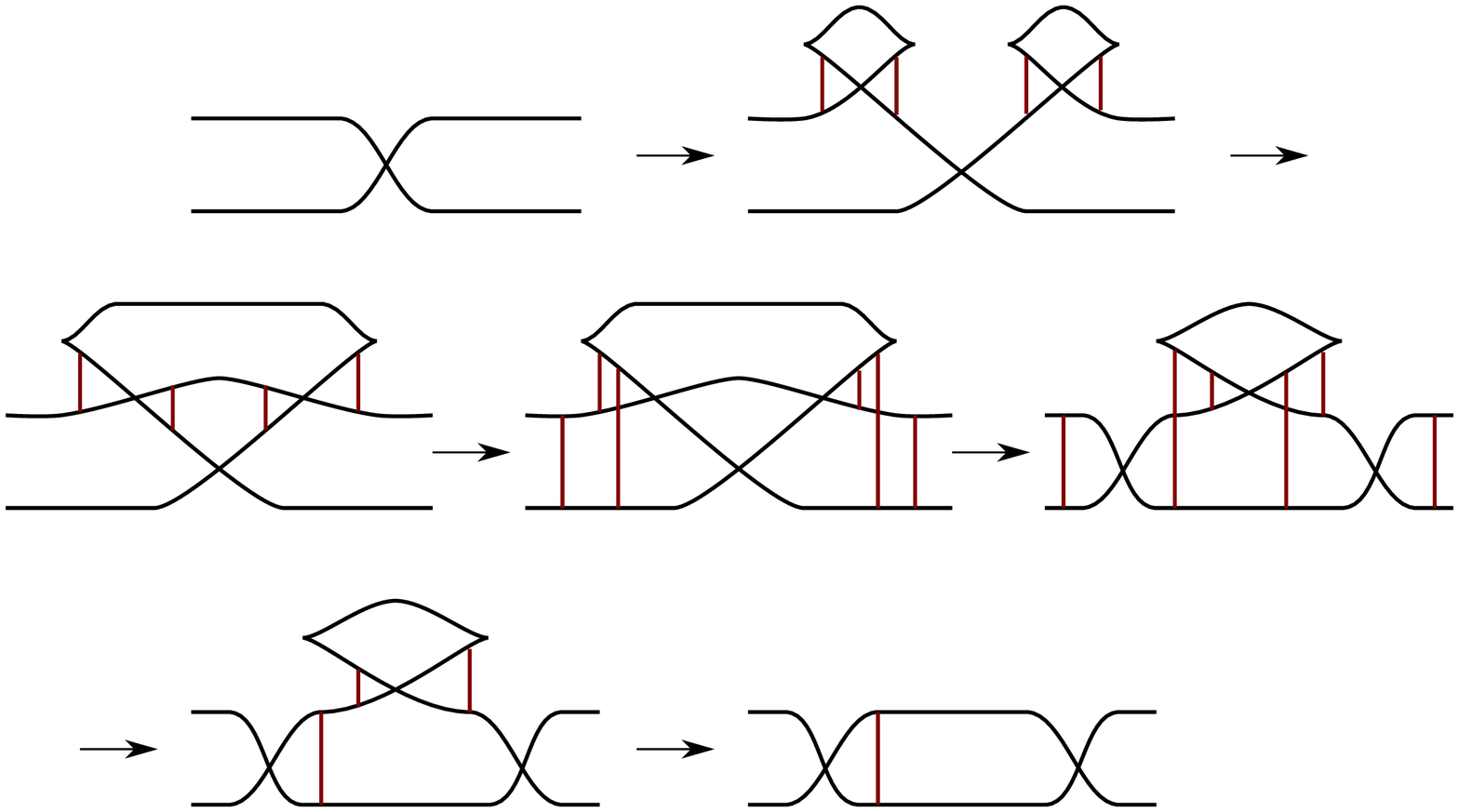}} 

\caption{(left) An MC2F for $L_\Gamma$ near vertices of $\Gamma$.  (right) Slices of the front projection of $L_\Gamma$ as $x_2$ decreases. }
\label{fig:Slice}
\end{figure}

It remains to show it is impossible to construct an MC2F if there is at least one face, $F$, with an odd number of vertex.  For a vertex $v$ of $F$, the neighborhood $N$ of $v$ above which $L_\Gamma$ is $4$-sheeted  has a natural polygonal decomposition with $6$ triangular $2$-cells.  Consider the $1/3$ of $N$ consisting of the two triangles with vertices at the swallowtail point that points into the face $F$, and label the cells of this region as in Figure \ref{fig:D4} (right); number sheets above $A_1$ and $B_3$ as they are ordered above $C_2$.  The choice of $T$ and $S$ corner at $A_1$ is unimportant since $T=S= I+E_{2,3}$.

\medskip

\noindent {\bf Claim:}  Any augmentation $\epsilon : \mathcal{A} \rightarrow \Z/2$ satisfies  $\epsilon(b^1_{1,2}) + \epsilon(b^2_{1,2}) = 1$.  

\begin{proof}[Proof of Claim]
   The differential $\partial B_3 = (A_0)_T (I+B_3) +(I+B_3) A_1$ is
\[
\partial B_3 = \left[\begin{array}{cccc} 0 & 1 & 1 & 0 \\  & 0 & 0 & a^0_{1,2} \\  & & 0 &  a^0_{1,2} \\ & & & 0 \end{array} \right]( I +B_3) + (I+B_3) 
\left[\begin{array}{cccc} 0 & a^1_{1,2} & a^1_{1,3} & a^1_{1,4} \\  & 0 & 0 & 0 \\  & & 0 &  0 \\ & & & 0 \end{array} \right] 
\] 
The $(2,3)$-entry of $B_3$ is $0$ because of the crossing locus, so the top row of $\partial B_3$ is 
\[
[0 \,\, 1+a^1_{1,2} \,\, 1+a^1_{1,3} \,\, a^1_{1,4}+ b^3_{2,4}+b^3_{3,4}].
\]
The equation $\epsilon(\partial B_3)=0$ then implies 
\begin{equation} \label{eq:B3}
\epsilon(a^1_{1,2}) =1;  \quad \epsilon(a^1_{1,3}) =1; \quad \epsilon(a^1_{1,4}) =\epsilon( b^3_{2,4})+\epsilon(b^3_{3,4}).
\end{equation}
If we consider the corresponding equation $\partial B_3'$ where $B_3'$ belongs to a different $1/3$ of $N$, the location of the $a^1_{1,j}$ are permuted within the matrix $A_1$. For instance,  in $\partial B'_3$, the top row of the $A_1$ matrix would become $[ 0\, a^1_{1,2}\, a^1_{1,4} \, a^1_{1,3} ]$ or  $[ 0\, a^1_{1,4}\, a^1_{1,2} \, a^1_{1,3} ]$ depending on the choice of total ordering of sheets above $B'_3$.  Thus, (\ref{eq:B3}) for $B_3'$ gives $\epsilon(a^1_{1,4}) = 1$ as well, so that the last equality of (\ref{eq:B3}) is
\[
1 = \epsilon( b^3_{2,4})+\epsilon(b^3_{3,4}).
\]  
Now, considering the $2\times 2$ block consisting of the $3$-rd and $4$-th rows and columns of $\partial C_1$ (resp. $\partial C_2$) gives the equation
\[
\epsilon(b^3_{2,4}) + \epsilon(b^1_{1,2}) = 0  \quad \quad \mbox{(resp.  } \epsilon(b^3_{3,4})+\epsilon(b^2_{1,2}) = 0),
\]
so $\epsilon(b^1_{1,2}) + \epsilon(b^2_{1,2}) =1$ as claimed.  [For instance, in the equation  
\[
\partial C_2 = A_{v_1}C_2+C_2A_{v_0} + T(I+B_2)(I+B_4)+ (I+B_3),
\]
 note that the $(3,4)$-entry of $T=I+E_{2,3}$, $B_4$,  $A_{v_1}C$, and $CA_{v_0}$ are all zero, since sheets $S_3$ and $S_4$ cross above $B_4$ and the $A$ and $C$ matrices are both strictly upper-triangular.]
\end{proof}

With the claim in hand, we note that if $\mathcal{A}$ had an augmentation then from Proposition \ref{prop:CHDMorse2}, there would exist a MC2F agreeing with $\epsilon$ on the $1$-skeleton and hence having, at each vertex of $F$, an odd number of handleslide arcs crossing into $F$ through the $1$-cells $B_1$ and $B_2$. 
Since no handleslides can enter $F$ along the crossing arcs that run along the edges of $\Gamma$, and $F$ has an odd number of vertices, this means that in total there are an odd number of handleslide arcs entering the $2$-sheeted region above $F$.  But, this is impossible since these arcs would have to meet in pairs in the interior of the $2$-sheeted region of $F$.


\end{proof}

\begin{remark}
For 1-dimensional Legendrian knots, it is shown in \cite{NRSSZ} that the category of constructible sheaves from \cite{STZ} is equivalent to a category whose moduli space of objects consists of augmentations up to DGA homotopy.  A close connection between constructible sheaves and augmentations is expected in general.  

Proposition 1.2  of \cite{TZ}, shows that over $\Z/2$, $L_\Gamma \subset J^1S^2$ has a constructible sheaf defined over $\Z/2$ if and only if the dual graph to $\Gamma$ is $3$-colorable.  When $\Gamma$ is $3$-valent this condition is equivalent to every face of $\Gamma$ having an even number of vertices, so our Proposition  \ref{prop:TZsurface} is consistent with the expected connection between constructible sheaves and augmentations.  A more extensive study of the DGAs for the Treumann-Zaslow fronts, including results about augmentations implying Proposition \ref{prop:ObstructA}, is made in the recent work of Casals and Murphy, \cite{CasalsMurphy}.
\end{remark}

\subsection{The conormal of the unknot}

The unit conormal bundle of the unknot is a Legendrian torus in the unit cotangent bundle $ST^*\R^3$ that, using a canonical contactomorphism $ST^*\R^3 \cong J^1S^2$, becomes a Legendrian $\Lambda_U \subset J^1S^2$.  The front projection of $\Lambda_U$ can be taken to be two sheeted with cone points at $(0,0,1)$ and $(0,0,-1)$ and no other singularities.  A generic front diagram for $\Lambda_U$ is obtained by perturbing the cone points to produce the configuration of $4$ swallowtail points connected with cusps and crossings as pictured in Figure \ref{fig:Cone}.  The four cusp arcs connect the middle two sheets labeled $S_2$ and $S_3$ above the cells inside the swallowtail region.  The vertical (resp. horizontal) crossing arc is between sheets $S_3$ and $S_4$ (resp. sheets $S_1$ and $S_2$) and has its endpoints at two upward (resp. downward) swallowtail points.  See \cite{EENS} for more details.

\begin{figure}
\labellist
\small
\pinlabel $S_1=S_2$ [l]  at 230 170
\pinlabel $S_3=S_4$ [r] at 18 210
\pinlabel $S_0$ [b] at 134 34
\pinlabel $T_0$ [b] at 112 34
\pinlabel $T_1$ [r] at 216 112
\pinlabel $T_2$ [l] at 34 112
\pinlabel $B_0$ [l] at 126 94
\pinlabel $C_1$ at 82 100
\pinlabel $C_2$ at 174 100
\pinlabel $B_4$ [b] at 72 126
\pinlabel $B_3$ [b] at 150 126
\pinlabel $B_2$ [tr] at 70 74
\pinlabel $B_1$ [tl] at 176 74
\pinlabel $A_0$  [t] at 124 0
\endlabellist
\centerline{\raisebox{1cm}{\includegraphics[scale=.6]{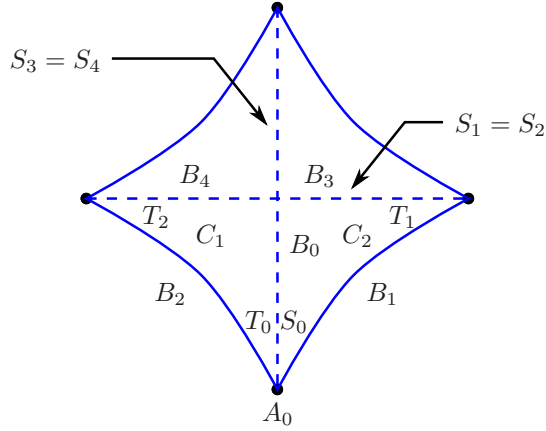}}} 

\caption{The resolution of a cone point, with labeling of cells and choice of $S$ and $T$ corners at swallowtail points as used in the proof of Proposition \ref{prop:conormal}.}
\label{fig:Cone}
\end{figure}

\begin{proposition}  \label{prop:conormal}
The conormal of the unknot $\Lambda_U \subset J^1S^2$ does not have any linear at infinity generating family.
\end{proposition}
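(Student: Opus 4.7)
The plan is to apply Proposition \ref{prop:ObstructA}(1), which reduces the claim to showing that every augmentation $\epsilon$ of the cellular DGA of $\Lambda_U$ has nonvanishing fiber homology $H(\epsilon_{x_0})$. I take $x_0$ in a $2$-sheeted region of $S^2$ far from both cones. At any $0$-cell $\alpha$ there, $V(e^0_\alpha)\cong(\Z/2)^2$ with basis $\{S_1,S_2\}$ ordered by descending $z$-coordinate, and the complex reduces to
\[
d_\alpha(S_2)=\epsilon(a^\alpha_{1,2})\,S_1,\qquad d_\alpha(S_1)=0.
\]
Thus $H(\epsilon_{x_0})\neq 0$ if and only if $\epsilon(a^\alpha_{1,2})=0$, and the proposition is reduced to showing that every augmentation $\epsilon$ satisfies $\epsilon(a^\alpha_{1,2})=0$ at any (and hence every) $0$-cell $\alpha$ in the equatorial $2$-sheeted region.

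Propagating from one such vertex to every other is easy. Along any edge $e^1_\beta$ lying inside the equatorial region, the chain map $f_\beta$ from the CHD associated to $\epsilon$ via Proposition \ref{prop:AugmentationCHD} satisfies $f_\beta-\mathit{id}$ strictly upper triangular, so it is a $2\times 2$ upper triangular isomorphism with ones on the diagonal; a short $2\times 2$ computation then shows that conjugation by such a map preserves the $(1,2)$-entry of a strictly upper triangular $2\times 2$ differential. Consequently $\epsilon(a^\alpha_{1,2})$ takes the same value at every vertex $\alpha$ in the $2$-sheeted region, so it suffices to pin that common value at a single vertex adjacent to a cone.

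To do that, I would carry out the augmentation-equation analysis in a single cone neighborhood, using the labeling of Figure \ref{fig:Cone}. The generators there are organized around the central $4$-sheeted vertex $A_0$, the four swallowtail vertices $T_0,S_0,T_1,T_2$, the $1$-cells $B_0,B_1,\ldots,B_4$, and the $2$-cells $C_1,C_2$. Writing $\epsilon\circ\partial=0$ for each $B_i$ and $C_j$, with the swallowtail corrections of Section \ref{sec:Background} (the $A_S,A_T$ boundary matrices at swallowtail vertices and the $(I+B_i)S,(I+B_i)T$ factors inserted into $\partial C_j$) applied at each of the four swallowtails, produces a system of linear equations on the $\epsilon$-values. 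By direct analogy with the derivation of $\epsilon(b^1_{1,2})+\epsilon(b^2_{1,2})=1$ in the proof of Proposition \ref{prop:TZsurface}, I expect that an explicit combination of two or three entries of these equations pins each $2$-sheeted generator $a^\alpha_{1,2}$ neighboring the cone to $0$.

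The main obstacle is the swallowtail bookkeeping around each cone. The four swallowtails come in both upward and downward flavors with specific placements of $S$ and $T$ corners forced by the front, and the ordering of sheets along each $1$-cell $B_i$ must be tracked consistently through the $A_S,A_T$ insertions; these choices are precisely what determine which entries of which matrix equations carry the information that forces $\epsilon(a^\alpha_{1,2})=0$. Once the linear algebra is executed cleanly at one cone, the vanishing propagates to every $2$-sheeted vertex by the second paragraph, so $H(\epsilon_{x_0})\neq 0$ for every augmentation, and Proposition \ref{prop:ObstructA}(1) concludes.
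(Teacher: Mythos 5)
Your overall strategy is the same as the paper's: reduce via Proposition \ref{prop:ObstructA}(1) to showing every augmentation has $\epsilon(a_{1,2})=0$ at a $0$-cell in the $2$-sheeted region, so that the fiber complex $\mathrm{Span}(S_1,S_2)$ with $dS_2=\epsilon(a_{1,2})S_1$ has nonzero homology. (Your propagation paragraph is correct but unnecessary: by Corollary \ref{cor:Cx0} the fiber homology is independent of the basepoint, so one may work directly at the vertex $A_0$ adjacent to a resolved cone.) The problem is that the decisive step --- that the augmentation equations around one cone actually force $\epsilon(a^0_{1,2})=0$ --- is only asserted by analogy with Proposition \ref{prop:TZsurface} (``I expect that an explicit combination of two or three entries \ldots pins \ldots to $0$''), and you yourself flag the swallowtail bookkeeping as an unresolved obstacle. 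That computation is the entire mathematical content of the proposition: augmentations of $\Lambda_U$ do exist, so nothing formal prevents some augmentation from having $\epsilon(a^0_{1,2})=1$ and hence vanishing fiber homology; whether this happens depends precisely on the placement of the $S$ and $T$ matrices at the four swallowtails, which is what you have left open.

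Concretely, the paper closes this gap by comparing the two $2$-cells $C_1$ and $C_2$ of Figure \ref{fig:Cone}. Taking upper-left $2\times 2$ blocks of
\[
\partial C_2 = A_{v_0}C + C A_{v_1} + (I+B_0)(I+B_3)T_1(I+B_1)S_0 + I,
\]
and using that the $(1,2)$-entries of $B_1$ and $B_3$ vanish (cusp and crossing arcs), one gets $\epsilon(b^0_{1,2}) = 1 + \epsilon(a^0_{1,2})$, the $a^0_{1,2}$ term entering through the $S_0$ factor. The analogous block of
\[
\partial C_1 = A_{v_0}C + C A_{v_1} + (I+QB_0Q)(I+B_4)T_2(I+B_2)T_0 + I
\]
gives $\epsilon(b^0_{1,2}) = 1$, since the $T$ matrices contribute no $a$-terms in that block. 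Comparing the two forces $\epsilon(a^0_{1,2})=0$. Note also that the mechanism here is different from Proposition \ref{prop:TZsurface}: there the relations produce a parity constraint on $b$-generators that can contradict the global face structure and rule out augmentations altogether, whereas here augmentations survive and the relations instead pin an $a$-generator, i.e., the fiber homology. So the analogy does not substitute for the computation; until you execute the $2\times 2$ block argument (or an equivalent) with the correct $S$/$T$ insertions, the proof is incomplete.
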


\begin{proof}
There is an obvious polygonal decomposition near the resolved cone point, and we label cells as in Figure \ref{fig:Cone}.   For any augmentation, the fiber homology $H(\epsilon_{x_0})$ can be computed from the complex associated to $A_0$ 
 which is
\[
V = \mbox{Span}(S_1,S_2) \quad \mbox{with}  \quad dS_1= 0, \,\, dS_{2} = \epsilon(a^0_{1,2}) S_1.
\]
Thus, the result follows from  Proposition \ref{prop:ObstructA} (1) once we show that $\epsilon(a^0_{1,2})=0$ holds for any $\epsilon$.

To this end, consider the differential of $C_2$, which (using initial and terminal vertices $v_0=v_1=A_0$) is
\[
\partial C_2 = A_{v_0}C + C A_{v_1} + (I+B_0)(I+B_3)T_1(I+B_1)S_0 +I.
\]
Since the matrices are upper-triangular, the same equation holds when considering the upper-left $2\times 2$-blocks; 
 this $2\times 2$-block is 
\[
\left[\begin{array}{cc} 0 & \partial c^2_{1,2}  \\ 0 & 0 \end{array} \right] = 0 + 0 + (I+b^0_{1,2}E_{1,2})(I)(I+E_{1,2})(I)(I+ a^0_{1,2}E_{1,2}) +I = \left[\begin{array}{cc} 0 & b^0_{1,2}+1+a^0_{1,2}  \\ 0 & 0 \end{array} \right].
\]
Thus, $\epsilon \circ \partial =0$ implies
\begin{equation} \label{eq:b12}
\epsilon(b^0_{1,2}) = 1 + \epsilon(a^0_{1,2}).
\end{equation}
[The $(1,2)$-entry of the matrices $B_3$ and $B_1$ are zero because of a crossing and cusp arc respectively.]

Similarly, considering the upper left $2\times 2$-block of 
\[
\partial C_1 = A_{v_0}C + C A_{v_1} + (I+QB_0Q)(I+B_4)T_2(I+B_2)T_0 +I
\]
(the matrix $Q = Q_{3,4}$ is the permutation matrix for $(3 \, 4)$), gives
\begin{equation} \label{eq:b12two}
\epsilon(b^0_{1,2}) = 1.
\end{equation}
Thus, the required equality $\epsilon(a^0_{1,2})=0$ follows from comparing (\ref{eq:b12}) and (\ref{eq:b12two}).

\end{proof}

\begin{remark}
It is interesting to note that any generic $1$-dimensional slice of $\Lambda_U$ {\it does} admit a linear at infinity generating family.  Indeed, pulling the front projection of $\Lambda_U$ back along an immersion $f:S^1 \rightarrow S^2$ that is transverse to the base projection of the singular set produces a Legendrian $\Lambda_f \subset J^1S^1$.  The front projection of any $\Lambda_f$ has a graded normal ruling obtained from taking all crossings to be switches, i.e. above the $4$ sheeted region the middle two sheets are paired as are the outer two sheets.   See \cite{ChP} or \cite{Ru} for a discussion of normal rulings in $J^1S^1$; the proof of equivalence of the existence of  graded normal rulings and linear at infinity generating families from \cite{FuchsRutherford} continues to hold in the $J^1S^1$ setting since away from crossings and cusps the generating families constructed in Section 3 of \cite{FuchsRutherford} have a standard form depending only on the pairing of sheets.  
\end{remark}

\begin{remark}
As an alternate approach, the definition of MC2F and main results of this paper can all be extended to allow fronts with cone point singularities using the extension of the cellular DGA to such fronts given in Section 5.3 of \cite{RuSu1}.  The definition of MC2F for a Legendrian $L \subset J^1M$ with cone points has the additions:

Let  $ R_\nu \subset M \setminus \Sigma_{\mathcal{C}}$ be a region that borders a cone point between sheets $S_k$ and $S_{k+1}$.  Then,  
\begin{enumerate}
\item  $\langle d_\nu S_{k+1} , S_k \rangle =0$, and 
\item for any $i<k$ (resp. $k+1<j$), there are $\langle d_{\nu} S_{k+1}, S_i \rangle$  $(i,k)$-handleslide arcs (resp. $\langle d_{\nu} S_{j}, S_k \rangle$ $(k+1,j)$-handleslide arcs) with endpoints at the cone point. 
\end{enumerate}
\end{remark}

\subsection{An example obstructing a trival bundle domain}

To illustrate the obstruction from Proposition \ref{prop:ObstructA} (2), consider a non-seperating curve $\gamma \subset T^2$.  There is a corresponding Legendrian $L_\gamma \subset J^1T^2$ with $2$-sheeted front projection having a crossing arc above $\gamma$ and no other crossings or cusps.

\begin{proposition}
There is no tame generating family $F: E \rightarrow \R$ for $L_\gamma$ whose domain is a trivial bundle over $T^2$.  
\end{proposition}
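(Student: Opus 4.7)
The plan is to invoke Proposition \ref{prop:ObstructA} (2): it suffices to show that for every augmentation $\epsilon$ of the cellular DGA of $L_\gamma$, the monodromy representation $\Phi_\epsilon$ is non-trivial. By Theorem \ref{thm:Main2} (specifically Propositions \ref{prop:AugmentationCHD} and \ref{prop:CHDMorse2}), any augmentation $\epsilon$ corresponds to a nice MC2F $\mathcal{C} = (\{d_\nu\}, H, H_{-1})$ that agrees with the CHD of $\epsilon$ on the $1$-skeleton, and the monodromy of $\mathcal{C}$ in the sense of Corollary \ref{cor:Cx0} gives $\Phi_\epsilon$.

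Since $L_\gamma$ has only two sheets, every component of $H$ is a $(1,2)$-handleslide arc, and by Observation \ref{ob:CM2F} (4) no such arc can cross $\gamma$ (the crossing locus between $S_1$ and $S_2$). The next step is to show that every $d_\nu = 0$. Because $\gamma$ is non-separating, $T^2 \setminus \gamma$ is connected, so any region $R$ bordering $\gamma$ borders it with itself on the opposite side. Axiom \ref{ax:iso} (2) then forces the transposition $Q:V(R) \to V(R)$ exchanging $S_1$ and $S_2$ to be a chain automorphism of $(V(R), d_R)$; if $d_R(S_2) = S_1$ were non-zero, then $(Q d_R Q^{-1})(S_1) = S_2$, contradicting strict upper-triangularity of $d_R$. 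Hence $d_R = 0$ on every region bordering $\gamma$. Axiom \ref{ax:iso} (1) then propagates $d_\nu = 0$ by conjugation across handleslide arcs to every other region, so $H(\epsilon_{x_0}) = V(R_0) \cong (\Z/2)^2$ for any basepoint $x_0$.

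To conclude, I would take $\sigma \subset T^2$ to be a loop crossing $\gamma$ transversally exactly once (a generator of $\pi_1(T^2)$ dual to $[\gamma]$), chosen generically to avoid $H_{-1}$. By the definition in Section \ref{sec:Comb}, the continuation map $f(\sigma)$ is an ordered product with exactly one $Q$-factor and a finite number of $h_{1,2}$-factors. Since $h_{1,2}^2 = I$, the product simplifies to $h_{1,2}^{a} \, Q \, h_{1,2}^{b}$ with $a,b \in \{0,1\}$, and all four resulting matrices in $\mathit{GL}_2(\Z/2) \cong S_3$ are non-identity. Thus $\Phi_\epsilon([\sigma])$ is non-trivial, which verifies the hypothesis of Proposition \ref{prop:ObstructA} (2) and gives the desired obstruction. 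The main subtlety is the vanishing of the differentials: this uses the non-separating property of $\gamma$ in an essential way via the self-adjacency argument across the crossing arc, whereas the rest of the argument is a routine computation in $\mathit{GL}_2(\Z/2)$.
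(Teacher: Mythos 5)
Your proposal is correct and follows essentially the same route as the paper: reduce to showing non-trivial monodromy via Proposition \ref{prop:ObstructA} (2), pass to a nice MC2F agreeing with the CHD of $\epsilon$ on the $1$-skeleton, observe that the relevant differentials vanish, and check that the continuation map of a loop meeting $\gamma$ transversally once is a non-identity element of $\mathit{GL}_2(\Z/2)$ (the paper arranges the crossing with $\gamma$ to occur just before the endpoint of $\sigma$, giving $f(\sigma)=Q(I+E_{1,2})^n$; your reduction to $h_{1,2}^{a}Qh_{1,2}^{b}$ with all four cases non-identity is equivalent bookkeeping). One step is misstated, though harmlessly: a region $R$ bordering $\gamma$ need not border itself on the opposite side, since regions are components of $T^2\setminus\Sigma_{\mathcal{C}}$ rather than of $T^2\setminus\gamma$ (e.g.\ a closed $(1,2)$-handleslide loop parallel to $\gamma$ could separate the two sides), so $Q$ is in general a chain isomorphism onto the complex of the neighboring region, not an automorphism of $(V(R),d_R)$. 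The vanishing $\langle d_R S_2, S_1\rangle=0$ follows all the same, because the differential of whichever region lies across $\gamma$ must be strictly upper triangular with respect to its own sheet ordering; this is precisely Observation \ref{ob:CM2F} (2), which is what the paper cites. Relatedly, the non-separating hypothesis is not what forces the differentials to vanish; its essential role is to guarantee the existence of a loop $\sigma$ meeting $\gamma$ transversally exactly once (odd geometric intersection), without which the monodromy computation would not apply.
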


\begin{proof}
To apply Proposition \ref{prop:ObstructA}, we must show that any augmentation $\epsilon$ has non-trivial monodromy representation, $\Phi_{\epsilon,x_0}$.  Let $\mathcal{C}$ be an MC2F that agrees with the corresponding CHD, $\mathcal{D} \leftrightarrow \epsilon$, on the $1$-skeleton.   Take $x_0$ to be slightly shifted off of $\gamma$, and $\sigma$ a loop based at $x_0$ that intersects $\gamma$ geometrically once just before its endpoint.  The chain level continuation map for $\mathcal{C}$ has matrix of the form
\[
f(\sigma) = Q (I+E_{1,2})^{n} = \left[\begin{array}{cc} 0 & 1 \\ 1 & n \end{array}\right]
\] 
where $n$ is the number of handleslide arcs that $\sigma$ encounters and $Q$ is the permutation matrix for $(1\,2)$. 
The differential from $\mathcal{C}$ at $x_0$ vanishes (via Observation \ref{ob:CM2F} (2)), so we conclude that $f(\sigma)$ induces a non-identity map on homology, i.e. $\Phi_{\epsilon,x_0}([\sigma]) \neq 1$.   
\end{proof}

Note that $L_{\gamma}$ does have an obvious generating family whose domain is a non-trivial $2$-fold cover of $T^2$.

\end{document}